\newcommand{\C}{\mathbb{C}}
\newcommand{\R}{\mathbb{R}}
\newcommand{\Z}{\mathbb{Z}}
\newcommand{\A}{\mathbb{A}}
\newcommand{\Q}{\mathbb{Q}}
\newcommand{\Tr}{{\rm Tr}}
\newcommand{\X}{{\bf X}}
\newcommand{\la}{\langle}
\newcommand{\ra}{\rangle}
\renewcommand{\Im}{{\rm Im}}
\renewcommand{\Re}{{\rm Re}}
\newcommand{\HCd}{{\rm H}_\C^2}
\newcommand{\HCn}{{\rm H}_\C^n}
\newcommand{\HRd}{{\rm H}_\R^2}
\newcommand{\HRn}{{\rm H}_\R^n}
\newcommand{\Pf}{{\em Proof}. }
\newcommand{\EPf}{\hfill$\Box$\vspace{.5cm}}
\numberwithin{equation}{section}
\newtheorem{thm}{Theorem}[section]
\newtheorem*{thm*}{Theorem}
\newtheorem{cor}[thm]{Corollary}
\newtheorem{lem}{Lemma}[section]
\newtheorem{prop}{Proposition}[section]
\newtheorem{dfn}{Definition}[section]
\theoremstyle{remark}
\newtheorem{rmk}{Remark}
\title{Real reflections, commutators and cross-ratios in complex hyperbolic space}
\author{Julien Paupert \footnote{Author partially supported by NSF grant DMS 1007340 and SNF grant 200021-131967/1.}\\
SoMSS \\
Arizona State University\\
P.O. Box 871804\\
Tempe, AZ 85287-1804, USA\\
e-mail: {\tt paupert@asu.edu}
\and 
Pierre Will\\
Institut Fourier\\
Universit\'e de Grenoble I\\
100 rue des Maths\\
38042 St-Martin d'H\`eres, France\\
e-mail: {\tt pierre.will@ujf-grenoble.fr}}
\begin{document}
\maketitle

\begin{abstract} We provide a concrete criterion to determine whether or not two given elements of PU(2,1)
 can be written as products of real reflections, with one reflection in common. As an application, we show that 
the Picard modular groups ${\rm PU}(2,1,\mathcal{O}_d)$ with $d=1,2,3,7,11$ are generated by real
  reflections up to index 1, 2, 4 or 8.
\end{abstract}

\section{Introduction}
A classical way of studying isometries of a symmetric space $\mathcal{S}$ is to decompose
them as products of involutions. The most comfortable situation is to have a class 
$\mathcal{C}$ of isometric involutions with the following properties.

\begin{enumerate}
 \item Any two involutions in $\mathcal{C}$ are conjugate in Isom($\mathcal{S}$).
 \item Any element of Isom($\mathcal{S}$) can be written as a product $s_1s_2$, with $s_i\in\mathcal C$.
\end{enumerate}

For example, if such a family of involutions exists, describing the fixed points of an isometry $A=s_1s_2$ of 
$\mathcal{S}$ amounts to studying the relative position of the fixed point sets of $s_1$ and $s_2$ 
which are totally geodesic subspaces and are isometric to one another. If such a family $\mathcal{C}$ exists, one 
usually says that Isom($\mathcal{S}$) has involution length 2 with respect to $\mathcal{C}$. Of course this requirement is too 
optimistic in general (for example, it fails in Euclidean space of dimension at least 3).

Assuming that a symmetric space has this property, the next question is to decide when two isometries $A$ and $B$ can 
be decomposed using a common involution. We will call such a pair \textit{decomposable} with respect to 
$\mathcal{C}$ (see Definition \ref{defdecomp}; the term \textit{linked} is also commonly used, see \cite{BM}). This  property simplifies the 
understanding of the group $\la A,B\ra$ as it reduces to studying the relative position of 3 pairwise 
isometric totally geodesic subspaces.


One of the most elementary cases is that of the Poincar\'e disk $\Delta$. There are two classes of 
involutions in ${\rm Isom}(\Delta)$, namely \textit{half-turns} and \textit{reflections}. Viewing $\Delta$ as 
the unit disk in $\C$, half-turns are conjugate in ${\rm Isom}^+(\Delta)$ to $z\longmapsto -z$ and 
reflections to $z\longmapsto \overline z$. It is a classical fact that ${\rm Isom}^+(\Delta)$ has 
involution length 2 with respect to reflections, and 3 with respect to half-turns.  
Moreover, any pair of orientation-preserving isometries of the Poincar\'e disk can be decomposed in the form 
\begin{equation}\label{codec}A=s_1s_2 \mbox{ and }B=s_2s_3,\end{equation}
 with the $s_i$ either all half-turns or all reflections. This makes the description of many properties of the group $\la A,B\ra$ 
easier. For instance, when the $s_i$ are all orientation-reversing, the group $\la A,B\ra$ has index two in 
$\Gamma=\la s_1,s_2,s_3 \ra$. In particular, $\la A,B \ra$ is discrete if and only if $\Gamma$ is. The decomposition property we are interested in shares many features with this example, as our involutions are antiholomorphic.

In \cite{BM}, Basmajian and Maskit have studied this question for any space form, that is for Euclidean space, real hyperbolic 
space and the sphere. In particular, they prove that any orientation-preserving isometry is a product of two involutions. They also 
show for instance that pairs in ${\rm Isom}^+(\HRn)$ are generically non decomposable when $n\geqslant 4$.

In this paper we study the question of decomposability in the complex hyperbolic plane, 
which can be seen via a projective model as the unit ball in $\C^2$ equipped with a PU(2,1)-invariant metric. In fact, 
PU(2,1) is the group of holomorphic isometries of $\HCd$; it is the identity component of Isom($\HCd$), the other 
connected component consisting of all antiholomorphic isometries. In particular, PU(2,1) has index two in Isom($\HCd$) and 
in this context holomorphicity  plays the role of preservation of orientation for space forms. The class of involutions we are interested in consists of antiholomorphic involutions, which are all conjugate in PU(2,1) to the map given in affine 
ball coordinates by
\begin{equation}\sigma_0 : (z_1,z_2) \longmapsto(\overline{z_1},\overline{z_2}).\end{equation}
Clearly, $\sigma_0$ fixes pointwise the set of real points of $\HCd$, and conjugates of $\sigma_0$ by 
elements of PU(2,1) fix pointwise \textit{real planes}, which are (totally real) totally geodesic embedded 
copies of the Poincar\'e disk. We will refer to these antiholomorphic involutions as 
\textit{real reflections}.

It has been known since Falbel and Zocca (see \cite{FZ}) that the involution length of PU(2,1) with respect to 
real reflections is 2, and this fact has been generalized to all dimensions by Choi \cite{Cho}. The main question 
we address in this paper is the following. \\

\textit{When is a pair of elements of ${\rm PU}(2,1)$ decomposable with respect to real reflections?}\\

We will abbreviate this by saying that a pair $(A,B)\in {\rm PU}(2,1)^2$ is $\textit{$\R$-decomposable}$ (see Definition~\ref{defdecomp}). 
There is no hope that generic pairs of elements in PU(2,1) are 
$\R$-decomposable. A rough argument for this is the following dimension count. The group PU(2,1) has dimension 8, thus
the product PU(2,1)$\times$PU(2,1) has dimension 16. On the other hand, the set of real planes in $\HCd$ has 
dimension $5$ (to see this note that the stabilizer of the set of real points of the ball is PO(2,1) which has 
dimension $3$). As a real reflection is determined by its fixed real plane, the set of triples of real 
reflections has dimension $15$ (and therefore cannot be diffeomorphic to ${\rm PU}(2,1)\times {\rm PU}(2,1)$). In fact this count leads us to expect that $\R$-decomposability of a pair of isometries is determined by a single (codimension 1) condition, and this will turn out to be the case.

This question has been examined in \cite{W2}, where it was proved that, under the assumption that 
$A$ and $B$ are loxodromic, the pair $(A,B)$ is $\R$-decomposable provided that the trace of the 
commutator $[A,B]$ is real. However, this result was obtained as a byproduct of a classification of pairs of 
elements of PU(2,1) by traces; namely the data $(\Tr A,\Tr B,\Tr AB, \Tr A^{-1}B)$ determines the pair 
$(A,B)$ up to PU(2,1)-conjugation (modulo an order two indetermination, corresponding to the sign of the 
imaginary part of $\Tr[A,B]$). The present approach is more natural, and in particular it allows us to remove the assumption that $A$ and $B$ are loxodromic.

Our main result (Theorem \ref{decomp}) is the following.
\begin{thm*}
Let $A,B \in {\rm PU}(2,1)$ be two isometries not fixing a common point in $\overline{\HCd}$. Then: the pair $(A,B)$ is $\R$-decomposable 
if and only if the commutator $[A,B]$ has a fixed point in $\overline{\HCd}$ whose associated eigenvalue is real and positive.
\end{thm*}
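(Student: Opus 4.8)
The plan is to first reformulate $\R$-decomposability as the existence of a single \emph{common reversing reflection}. Indeed, since every antiholomorphic involution of $\HCd$ is a real reflection (all being conjugate to $\sigma_0$), writing $A=\sigma_1\sigma_2$ and $B=\sigma_2\sigma_3$ with common middle reflection $\sigma:=\sigma_2$ is equivalent to prescribing $\sigma_1=A\sigma$ and $\sigma_3=\sigma B$; these are automatically antiholomorphic, so they are real reflections \emph{if and only if} they are involutions, which in turn is equivalent to the two reversing relations $\sigma A\sigma=A^{-1}$ and $\sigma B\sigma=B^{-1}$. Thus the theorem reduces to: there is a real reflection $\sigma$ simultaneously inverting $A$ and $B$ if and only if $[A,B]$ has a fixed point in $\overline{\HCd}$ with real positive eigenvalue. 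I would carry out both implications in this language.

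For the ``only if'' direction, suppose such a $\sigma$ exists and form $M:=\sigma_1\sigma_3\sigma_2$. A direct expansion using $\sigma_i^2=1$ gives $[A,B]=\sigma_1\sigma_3\sigma_2\sigma_1\sigma_3\sigma_2=M^2$, so the commutator is the square of the antiholomorphic isometry $M$ (a product of three real reflections). Two observations then finish this direction. First, $M$ is an isometry of the compact ball $\overline{\HCd}$, so by Brouwer it fixes some $p\in\overline{\HCd}$. Second, lifting $M$ to an antilinear map $z\mapsto R\bar z$ on $\C^{2,1}$, a fixed line of $M$ satisfies $R\bar v=\alpha v$, whence $[A,B]=M^2$ acts on that line by $R\bar R v=|\alpha|^2 v$. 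Hence the eigenvalue of $[A,B]$ attached to the fixed point $p$ is $|\alpha|^2$, real and positive (equivalently, the eigenvalues of the $\widehat{M^2}$-lift come in conjugate pairs with positive product, forcing a positive real one). This produces exactly the required fixed point.

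The ``if'' direction is the main obstacle and the place where the hypothesis that $A,B$ share no fixed point in $\overline{\HCd}$ is essential. Here I must \emph{construct} the common reversing reflection from the fixed point $p$ of $[A,B]$. The strategy is to invert the computation above: use the positive real eigenvalue to extract an antiholomorphic square root $M$ of $[A,B]$ fixing $p$, and then set $\sigma:=(AB)^{-1}M$, which is antiholomorphic, and show it is a real reflection reversing both $A$ and $B$. The reversing relations are not automatic from $M^2=[A,B]$ alone, so I expect to pin down $M$ within its square-root family by normal-form considerations: conjugate so that $p$ is a standard point, put $A$ and $B$ in a position adapted to $p$ and to the eigendata of $[A,B]$, and verify that the positivity of the eigenvalue is precisely the (codimension one) condition making the system $\sigma A\sigma=A^{-1},\ \sigma B\sigma=B^{-1}$ solvable by a genuine real reflection. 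The no-common-fixed-point assumption guarantees that this configuration is nondegenerate (ruling out simultaneously triangularizable pairs and the accompanying indeterminacy), and the trace/cross-ratio classification of pairs from \cite{W2} is the natural bookkeeping tool. The principal difficulty will be uniformity across conjugacy types: $A$ and $B$ may be loxodromic, elliptic, or parabolic, and $p$ may be interior or on $\partial\HCd$, so the core work is checking that in every stratum the single positivity condition is both necessary and sufficient for the reversers of $A$ and of $B$ to meet.
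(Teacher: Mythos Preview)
Your ``only if'' direction is correct and in fact slightly cleaner than the paper's: writing $[A,B]=M^2$ with $M$ antiholomorphic, taking a Brouwer fixed point $p$ of $M$, and reading off the eigenvalue $|\alpha|^2>0$ is a perfectly good argument.

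The ``if'' direction, however, is not a proof but a plan, and the plan as stated has a genuine gap. Your formula $\sigma=(AB)^{-1}M$ is the right target (indeed, if $A=\sigma_1\sigma_2$ and $B=\sigma_2\sigma_3$ then $M=\sigma_1\sigma_3\sigma_2$ gives $(AB)^{-1}M=\sigma_2$), but ``pin down $M$ within its square-root family by normal-form considerations'' and ``checking that in every stratum the single positivity condition is sufficient'' are promises, not arguments. There are many antiholomorphic square roots of $[A,B]$ even after fixing the point $p$, and nothing in your outline isolates the one for which $\sigma A\sigma=A^{-1}$ and $\sigma B\sigma=B^{-1}$ hold. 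The case analysis you anticipate (loxodromic/elliptic/parabolic, interior/boundary) is exactly what one would like to avoid, and you give no indication of how it would go.

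The paper supplies the missing idea, which is geometric and uniform. From the fixed point $p_1$ one builds the 4-cycle
\[
p_1\overset{B^{-1}}{\longrightarrow}p_2\overset{A^{-1}}{\longrightarrow}p_3\overset{B}{\longrightarrow}p_4\overset{A}{\longrightarrow}p_1,
\]
and a one-line computation shows $\X(p_2,p_4,p_1,p_3)=\lambda_1^{-1}\cdot(\text{positive real})$. Thus $\lambda_1>0$ is equivalent to this cross-ratio being real and positive, which by the cross-ratio criterion (Goldman's Theorem~\ref{gold} on $\partial\HCd$, extended to interior points in Theorem~\ref{moregold}) is equivalent to the existence of a real reflection $\phi$ with $\phi:p_1\leftrightarrow p_3$, $p_2\leftrightarrow p_4$. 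Then $A\phi$ swaps $p_1,p_2$ and $B\phi$ swaps $p_1,p_4$; by Lemma~\ref{refl2} (an antiholomorphic isometry of $\HCd$ exchanging two points is a real reflection) both are real reflections, and $(A,B)$ is $\R$-decomposable. This single construction replaces your proposed case-by-case verification, and the no-common-fixed-point hypothesis is used only to ensure the 4-cycle is nondegenerate enough for the cross-ratio argument to apply.
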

Note that the eigenvalues of $[A,B]$ do not depend on the choice of lifts of $A$ and $B$ to U(2,1). We now sketch our strategy . Any fixed point $p_1$ of $[A,B]$ in $\overline{\HCd}$ gives rise to a cycle of points
\begin{equation}p_1\overset{B^{-1}}{\longrightarrow}p_2\overset{A^{-1}}{\longrightarrow}
p_3\overset{B}{\longrightarrow}p_4\overset{A}{\longrightarrow}p_1.\label{intro-4-cycle}\end{equation}
A key fact is that a real reflection $\sigma$ decomposes $A$ and $B$ if and only if it satisfies $\sigma(p_1)=p_3$ and 
$\sigma(p_2)=p_4$ (Lemma~\ref{exchange}). 
Thus we see that the decomposability of $(A,B)$ is equivalent to a specific symmetry of the above 4-cycle. Now, the existence of 
this symmetry can be detected via cross-ratios. Goldman has proved that an ideal tetrahedron has this 
symmetry if and only if the cross-ratio $\X(p_2,p_4,p_1,p_3)$ is real and positive (Lemma 7.2.1 of \cite{G}). We extend this fact to quadruples of points in 
$\overline{\HCn}$ (Proposition \ref{Xreal}). The last ingredient is to connect the cross-ratio of the 4-cycle \eqref{intro-4-cycle} to the eigenvalue of 
$[A,B]$ associated to the fixed point $p_1$. Denoting this eigenvalue by $\lambda_1$, we will see that the product $\lambda_1\cdot\X(p_2,p_4,p_1,p_3)$ 
is real and positive for any pair $(A,B)$ (this is relation \ref{linkcycleeigenval}). This means that $\lambda_1$ is real if and 
only if $X(p_2,p_4,p_1,p_3)$ is, and both quantities have the same sign, which gives the result. For the sake of completeness we 
then analyze what happens in the two special cases where $\lambda_1$ is negative (section \ref{section-maximal}), and when 
$A$ and $B$ have a common fixed point (section \ref{section-fix-commun}).
\begin{itemize}
 \item When $\lambda_1$ is negative, the fixed point of $[A,B]$ is always on the boundary of $\HCd$ and the group $\la A,B\ra$ preserves a 
complex line. This is an example of a maximal representation of the fundamental group of the once punctured torus, in the sense of 
\cite{BIW}.
\item To describe the situation when $A$ and $B$ have a common fixed point, the main ingredient is a detailed description of 
conditions guaranteeing that a real reflection decomposes a given isometry (this is Proposition \ref{decompCrefllox}). Most cases already appear in the literature, except when $A$ and $B$ are 
parabolic with a common fixed point. In particular, when $A$ and $B$ are both 3-step unipotent 
(see the definitions in section~\ref{generalites}), we use the relative position of their \textit{invariant fans}, as 
defined by Goldman and Parker  in \cite{GP}.
\end{itemize}

A classical and difficult question in complex hyperbolic geometry is to determine the discreteness or non-discreteness of a 
given finitely generated subgroup $\Gamma$ of PU(2,1), and to obtain a presentation of $\Gamma$. Even when discreteness is known 
from general results (for instance in the case of arithmetic lattices), finding a presentation of the 
group is a difficult problem. In fact, the most frequent method is to construct a fundamental domain and use the Poincar\'e 
Polyhedron theorem. This is a very technical task which requires a detailed understanding of the action of $\Gamma$ on $\HCd$.

There are not so many examples of explicit discrete subgroups of PU(2,1), and most of them are obtained from groups with 2 generators.
In this case, the existence of a decomposition as in \eqref{codec} connects an algebraic property 
of the group (being an index 2 subgroup of a group generated by 3 involutions) to a geometric property 
(the existence of totally geodesic fixed point sets in a certain configuration). In other words, $\Gamma$ appears as a 
\textit{reflection group}. Such decompositions appeared for instance naturally in \cite{FalPar2} or \cite{W3} which studied 
certain representations of Fuchsian groups in PU(2,1). They were also central in the constructions of fundamental domains for 
Mostow's lattices (\cite{M}) in \cite{DFP}, as well as for the new non-arithmetic lattices obtained in \cite{ParPau}, \cite{Pau}, 
\cite{DPP1} and \cite{DPP2}. 

In all these occurrences, the existence of the real reflections decomposing the generators required some work 
(part of the detailed geometric construction of the fundamental domains), whereas the concrete criterion given by Theorem 
\ref{decomp} allows us to easily reprove that the Mostow and Deraux-Parker-Paupert lattices are generated by real reflections. 
More importantly, the careful analysis of decomposability conditions for parabolic isometries in Proposition~\ref{decompCrefllox}~(c) 
allows us to show that new lattices are generated by real reflections, namely certain Picard modular groups.

The Picard modular groups $\Gamma_d={\rm SU}(2,1,\mathcal{O}_d)$ (the subgroup of ${\rm SU}(2,1)$ consisting of matrices with 
entries in $\mathcal{O}_d$, where $d$ is a positive squarefree integer and $\mathcal{O}_d$ denotes the ring of
integers of $\Q [\sqrt{-d}]$) are the simplest kind of arithmetic lattice in ${\rm SU}(2,1)$. However their explicit 
algebraic or geometric properties such as generators, fundamental domains and presentations are still unknown in all but very few cases. (Much is known however concerning their volumes, see \cite{H}, \cite{St} and more generally Prasad's volume 
formula \cite{Pra}). More specifically, presentations and fundamental domains have been obtained
for ${\rm PU}(2,1,\mathcal{O}_d)$ when $d=3$ (the so-called Eisenstein-Picard modular group) in \cite{FalPar} and when $d=1$ (the
Gauss-Picard modular group) in \cite{FFP}. More recently, generators for ${\rm PU}(2,1,\mathcal{O}_d)$ with 
$d=2,7,11$ were given in \cite{Z} (these values of $d$ are the ones for which the
ring $\mathcal{O}_d$ is Euclidean). Note that, for $d \neq 3$, ${\rm SU}(2,1,\mathcal{O}_d) \simeq {\rm PU}(2,1,\mathcal{O}_d)$ as 
there are no non-trivial cube roots of unity in $\mathcal{O}_d$. 

The Picard modular groups ${\rm PU}(2,1,\mathcal{O}_d)$ are  analogous to the Bianchi groups ${\rm PGL}(2,\mathcal{O}_d)$ in 
${\rm PGL}(2,\C)$. Bianchi proved in the seminal paper \cite{Bi} that the Bianchi groups are \emph{reflective}, i.e. generated by 
reflections up to finite index, for $d \leqslant 19$, $d \neq 14,17$. At the end of the 1980's, Shaiheev extended these results in 
\cite{Sh}, using results of Vinberg, proving in particular that only finitely many of the Bianchi groups are reflective. (The finiteness result now follows from Agol's result, \cite{Ag}). The classification of 
reflective Bianchi groups was recently completed in \cite{BeMc}.

We prove that  the Picard modular groups ${\rm PU}(2,1,\mathcal{O}_d)$ with $d=1,2,3,7,11$ are reflective; more precisely that they 
have a subgroup of index 1,2 or 4 which is generated by real reflections (Corollary~\ref{picardrefl}).

The paper is organized as follows. We start with some geometric preliminaries in section 2, then study configurations of points and 
cross-ratios in section 3. Section 4 contains the statement and proofs of our main results, which we then apply to various discrete 
subgroups of PU(2,1) in section 5.

\section{Geometric preliminaries}

\subsection{Complex hyperbolic space and isometries\label{generalites}}
The standard reference for complex hyperbolic geometry is \cite{G}. For the reader's convenience we include a brief summary of key 
definitions and facts. Our main result concerns the case of dimension $n=2$, but the general setup is identical for higher dimensions so we state it for all $n \geqslant 1$. 

\paragraph{Distance function:}
Consider $\C^{n,1}$, the vector space $\C^{n+1}$ endowed with a Hermitian form $\langle \cdot \, , \cdot \rangle$ of signature $(n,1)$.
Let $V^-=\left\lbrace Z \in \C^{n,1} | \langle Z , Z \rangle <0 \right\rbrace$.
Let $\pi: \C^{n+1}-\{0\} \longrightarrow \C{\rm P}^n$ denote projectivization.
Define ${\rm H}_\C^n$ to be $\pi(V^-) \subset \C{\rm P}^n$, endowed with the distance $d$ (Bergman metric) given by:

\begin{equation}\label{dist}
  \cosh ^2 \frac{1}{2}d(\pi(X),\pi(Y)) = \frac{|\langle X, Y \rangle|^2}{\langle X, X \rangle  \langle Y, Y \rangle}
\end{equation}


 Different choices of Hermitian forms of signature $(n,1)$ give rise to different models of $\HCn$. The two most standard choices are 
the following. First, when the Hermitian form is given by $\la Z,Z\ra=|z_1|^2+\cdots +|z_n|^2-|z_{n+1}|^2$, the image of $V^-$ under 
projectivization is the unit ball of $\C^n$, seen in the affine chart $\{ z_{n+1}=1\}$ of $\C P^n$. This model is referred to as the \textit{ball model} of $\HCn$. 
Secondly, when $\la Z,Z\ra=2{\rm Re} (z_1\overline{z_{n+1}}) +|z_2|^2+\cdots +|z_n|^2$, we obtain the so-called \textit{Siegel model} of $\HCn$, 
which generalizes the Poincar\'e upper half-plane. More details on the Siegel model in dimension 2 will be given in the next section.

\paragraph{Isometries:}
From \eqref{dist} it is clear that ${\rm PU}(n,1)$ acts by isometries
on ${\rm H}_\C^n$, where ${\rm U}(n,1)$ denotes the subgroup of ${\rm
  GL}(n+1,\C)$ preserving $\langle \cdot , \cdot \rangle$, and ${\rm
  PU}(n,1)$ its image in ${\rm PGL}(n+1,\C)$. In fact, PU($n$,1) is the group of holomorphic isometries of 
${\rm H}_\C^n$, and the full group of isometries is ${\rm PU}(n,1) \ltimes \Z/2$, where the $\Z/2$ factor 
corresponds to a real reflection (see below). Holomorphic isometries of $\HCn$ can be of three types, depending on the number 
and location of their fixed points. Namely, $g \in {\rm PU}(n,1)$ is :
\begin{itemize}
\item \emph{elliptic} if it has a fixed point in ${\rm H}_\C^n$
\item \emph{parabolic} if it has (no fixed point in ${\rm H}_\C^n$ and)
  exactly one fixed point in $\partial{\rm H}_\C^n$
\item \emph{loxodromic}: if it has (no fixed point in ${\rm H}_\C^n$ and) exactly two fixed points in $\partial{\rm H}_\C^n$
 \end{itemize}

\paragraph{Totally geodesic subspaces and related isometries:} 
 A {\it complex k-plane} is a projective $k$-dimensional subspace of 
$\C P^n$ intersecting $\pi(V^-)$
 non-trivially (so, it is an isometrically embedded copy of ${\rm
   H}_\C^{k} \subset {\rm H}_\C^n$). Complex 1-planes are usually
 called {\it complex lines}. If $L=\pi(\tilde{L})$ is a complex $(n-1)$-plane, any
 $v \in \C^{n+1}-\{ 0\}$ orthogonal to $\tilde{L}$ is called a {\it
   polar vector} of $L$. Such a vector satisfies $\langle v,v \rangle
 >0$, and we will usually normalize $v$ so that $\langle v,v \rangle
 =1$.

A {\it real k-plane} is the projective image of a totally real
 $(k+1)$-subspace $W$ of $\C^{n,1}$, i. e. a $(k+1)$-dimensional real
 vector subspace such that $\langle v,w \rangle \in \R$ for all $v,w
 \in W$.  We will usually call real 2-planes simply real planes, or
 $\R$-planes. Every real $n$-plane in ${\rm H}_\C^n$ is the
 fixed-point set of an antiholomorphic isometry of order 2 called a {\it real
   reflection} or $\R$-reflection. The prototype of such an isometry
 is the map given in affine coordinates by $(z_1,...,z_n) \mapsto
 (\overline{z_1},...,\overline{z_n})$ (this is an isometry provided
 that the Hermitian form has real coefficients).

 An elliptic isometry $g$ is called {\it regular} if any of its matrix
 representatives $A \in {\rm U}(n,1)$ has distinct eigenvalues. The
 eigenvalues of a matrix $A \in U(n,1)$ representing an elliptic
 isometry $g$ have modulus one. Exactly one of these eigenvalues has
 eigenvectors in $V^-$ (projecting to a fixed point of $g$ in
 ${\rm H}_\C^n$), and such an eigenvalue will be called {\it of negative
   type}. Regular elliptic isometries have an isolated fixed point in
 ${\rm H}_\C^n$. A non regular elliptic isometry is called {\it special}. 
 Among the special elliptic isometries are the following two types (which exhaust all special elliptic types when $n=2$):

\begin{enumerate}
 \item A {\it complex reflection} is an elliptic
 isometry $g\in {\rm PU}(n,1)$ whose fixed-point set is a complex
 $(n-1)$-plane. In other words, any lift such an isometry to U(n,1) has $n$ 
equal eigenvalues, one of which has negative type.
 
  \item A {\it complex reflection in a point} is an elliptic isometry having a lift with $n$ 
equal eigenvalues, the remaining one being of negative type. In other words, such an isometry 
is conjugate to some $\lambda{\rm Id} \in U(n)$, where $U(n)$ is the stabilizer of the origin 
in the ball model. Complex reflections in a point with order 2 are also called {\it central involutions}; these are the 
symmetries that give ${\rm H}_\C^n$ the structure of a symmetric space.
\end{enumerate}


 A parabolic isometry is called {\it unipotent} if it has a unipotent lift in ${\rm
   U}(n,1)$. If not, it is called {\it screw-parabolic}, and it can be uniquely decomposed as 
 $g=pe=ep$ with $p$ unipotent and $e$ elliptic (see
 Theorem~\ref{conjclasses} below).  In dimensions $n>1$, unipotent 
 isometries are either  {\it 2-step} or {\it 3-step}, according to whether the minimal polynomial of their unipotent lift
 is $(X-1)^2$ or $(X-1)^3$ (see section 3.4 of \cite{ChGr}). 
\subsection{Models in dimension 2}
\subsubsection{The ball model of $\HCd$} The ball model of $\HCd$ arises from the choice of Hermitian form
$$ 
H=
\begin{bmatrix}
1 & 0 & 0 \\
0 & 1 & 0 \\
0 & 0 & -1 
\end{bmatrix}.
$$
It is classical, and we refer the reader to chapter 3 of \cite{G}. We only emphasize the following fact: any elliptic isometry of 
$\HCd$ is conjugate to one given in ball coordinates by $(z_1,z_2) \mapsto (e^{i\alpha}z_1,e^{i\beta}z_2)$ for some $\alpha,\beta \in \R/\Z$. A matrix representative in SU(2,1) for the latter is:

\begin{equation}\label{classellip}
 E_{ (\alpha,\beta)}=\begin{bmatrix}e^{i(2\alpha-\beta)/3} & 0 & 0 \\0 & e^{i(2\beta-\alpha)/3} &0\\0 & 0 &  e^{-i(\alpha+\beta)/3} 
\end{bmatrix}.
\end{equation}

\subsubsection{The Siegel model of $\HCd$}
In the presence of parabolic elements, it is very convenient to use the Siegel domain, as 
the stabilizer of the point at infinity (see below) consists of upper triangle matrices. As mentioned in the previous section, this 
model corresponds to the Hermitian form given by the matrix:
$$ 
H=
\begin{bmatrix}
0 & 0 & 1 \\
0 & 1 & 0 \\
1 & 0 & 0 
\end{bmatrix}
$$ 
In this model, any point  $m \in \HCd$ admits a unique lift to $\C^{2,1}$ of the following form, called its \emph{standard lift}:
\begin{equation}
 {\bf m}=\begin{bmatrix}
          (-|z|^2-u+it)/2\\z\\1
         \end{bmatrix}\mbox{ with } (z,t,u)\in\C\times\R\times]0,\infty[ .
\end{equation}

\noindent The triple $(z,t,u)$ is called the \textit{horospherical coordinates} of $m$. The boundary of $\HCd$ is 
the level set $\lbrace u=0\rbrace$, together with the distinguished point at infinity, given by
$$
q_\infty\sim\left[
\begin{array}{c}
1 \\ 0 \\ 0
\end{array}\right].
$$
 Level sets $\lbrace u=u_0\rbrace$ with fixed $u_0>0$ are called \emph{horospheres based at $q_\infty$}. 
The boundary $\partial\HCd\setminus \{ q_\infty \}$ is a copy of the Heisenberg group $\mathfrak{N}$ of 
dimension 3, with group law given in $[z,t]$ coordinates by:
\begin{equation}
\label{Heisprod}[z_1,t_1]\cdot [z_2,t_2]=[z_1+z_2,t_1+t_2+2{\rm Im}(z_1\overline{z_2})].
\end{equation}
The stabilizer of $q_\infty$ in ${\rm SU}(2,1)$ consists of upper triangular matrices, and is generated by the following 3 types 
of isometries: Heisenberg translations $T_{[z,t]}$ ($(z,t)\in \R\times \C$), Heisenberg rotations $R_\theta$ ($\theta \in \R/2\pi\Z$) 
and Heisenberg dilations $D_r$ ($r >0$), where:
\begin{equation}\label{stabinf}
\begin{array}{ccc}
T_{[z,t]}=
\begin{bmatrix}
1 & -\overline{z} & -(|z|^2-it)/2 \\
0 & 1 & z \\
0 & 0 & 1
\end{bmatrix}
&
R_\theta=
\begin{bmatrix}
e^{-i\theta/3} & 0 & 0 \\
0 & e^{2i\theta/3} & 0 \\
0 & 0 & e^{-i\theta/3}
\end{bmatrix}
&
D_r=
\begin{bmatrix}
r & 0 & 0 \\
0 & 1 & 0 \\
0 & 0 & 1/r
\end{bmatrix}.
\end{array}
\end{equation}
 In Heisenberg coordinates, they correspond  respectively to the following transformations
\begin{itemize}
 \item $T_{[z,t]}$ is the left multiplication by $[z,t]$,
 \item $R_\theta$ is given by $[w,s]\longmapsto [e^{i\theta}w,s]$,
 \item $D_r$ is the Heisenberg dilation $[w,s]\longmapsto [rw,r^2s]$.
\end{itemize}
Note that Heisenberg translations and rotations preserve each horosphere based at $q_\infty$ whereas
Heisenberg dilations permute horospheres based at $q_\infty$. We will denote by ${\rm Isom}(\mathfrak{N})$ the 
non-loxodromic stabilizer of $q_\infty$ in ${\rm SU}(2,1)$, which is generated by the $T_{[z,t]}$ and $R_\theta$. 
The notation ${\rm Isom}(\mathfrak{N})$ comes from the fact it is the isometry group of the \textit{Cygan metric}, 
which we will not use here (see \cite{FalPar}). The group ${\rm Isom}(\mathfrak{N})$ consists exactly of those 
matrices of the form:
\begin{equation}
P_{(z,t,\theta)}=T_{[z,t]}R_{\theta}=\begin{bmatrix}
e^{-i\theta/3} & -e^{2i\theta/3}\overline{z} & -e^{-i\theta/3}(|z|^2-it)/2 \\
0 & e^{2i\theta/3} & e^{-i\theta/3}z \\
0 & 0 & e^{-i\theta/3}
\end{bmatrix}
\end{equation}
It is sometimes more convenient to work with the lift of $P_{(z,t,\theta)}$ to U(2,1) given by 
\begin{equation}\label{PztU21}
\begin{bmatrix}
1 & -\overline{z}e^{i\theta} & -(|z|^2-it)/2\\
0 & e^{i\theta} & z\\
0 & 0 & 1       
\end{bmatrix} 
\end{equation}

In terms of these parameters the various parabolic conjugacy classes are easily described. Any parabolic isometry  is conjugate in 
PU(2,1) to exactly one of the following:
\begin{itemize}
 \item $P_{(1,0,0)}=T_{[1,0]}$ if it is 3-step unipotent,
 \item $P_{(0,1,0)}$ if it is 2-step unipotent,
 \item $P_{(0,1,\theta)}$ for some non-zero $\theta\in\R/2\pi\Z$ if it is screw parabolic. 
\end{itemize}
\begin{rmk}
Screw and 2-step unipotent parabolics share the property of preserving a (unique) complex line. In the case of $P_{(0,1,\theta)}$, it is the line polar to the vector $\begin{bmatrix}0 & 1 & 0\end{bmatrix}^T$. As we will see in the next section, the situation is slightly more complicated for 3-step unipotent parabolics.
\end{rmk}
Finally, recall from \cite{FalPar} for future reference that the exact sequence:
\begin{equation}\label{Pi} 1 \longrightarrow \R \longrightarrow \mathfrak{N} \overset{\Pi}{\longrightarrow} \C \rightarrow 1
\end{equation}
induces an exact sequence:
\begin{equation}\label{exact}
 1 \longrightarrow{\rm Isom}^+ (\R) \longrightarrow {\rm Isom} (\mathfrak{N}) \overset{\Pi_*}{\longrightarrow} {\rm Isom}^+ (\C) \rightarrow 1.
\end{equation}
Explicitly: $\Pi_*(P_{(z,t,\theta)})=\left(\begin{matrix} e^{i\theta} & z \\ 0 & 1\end{matrix}\right)$, acting on $\C$ by 
$w \mapsto e^{i\theta}w+z$.

\subsection{The invariant fan of a 3-step unipotent parabolic\label{section-invariant-objects}}
The Siegel model is very well adapted to describing the action of parabolic isometries. To do so, we give a 
few more details on the structure of the boundary of $\HCd$. It is equipped with a (CR) contact 
structure, which is given in Heisenberg coordinates as the kernel of the 1-form
$$\alpha=dt-2xdy+2ydx.$$
We will denote by $C_m$ the contact plane at a point $m\in\partial\HCd$. 
For any real plane $R$ of $\HCd$, the intersection $\overline R\cap\partial\HCd$ is a closed curve, which is called an 
$\R$-circle (see \cite{G}). These curves are Legendrian: they are everywhere tangent to the contact distribution. The following lemma concerns those $\R$-circles containing the point $q_\infty$, called \emph{infinite $\R$-circles}. We refer to chapter 
4 of \cite{G} for proofs.
\begin{lem}\label{lem-Rplane-infinite}
\begin{enumerate}
\item Let $R$ be a real plane containing the point $q_\infty$ of the Siegel model. Then in Heisenberg coordinates, the $\R$-circle 
$\partial R$ is an affine line in the Heisenberg group, which is contained in the contact plane at any of its points, and the 
restriction to the boundary of the reflection about $R$ is the (Euclidean) half-turn about this affine line. 
\item More precisely, an affine line in the Heisenberg group is an infinite $\R$-circle if and only if it is contained in the
contact plane at one of its points.
\end{enumerate}
\end{lem}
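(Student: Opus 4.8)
The plan is to work entirely in the Siegel model with Heisenberg coordinates, and to prove both parts by direct computation with the contact form $\alpha = dt - 2x\,dy + 2y\,dx$, appealing to the concrete description of real reflections as antiholomorphic involutions fixing real planes. First I would fix a real plane $R$ through $q_\infty$ and observe that, up to applying an element of $\mathrm{Isom}(\mathfrak{N})$ (which acts transitively on infinite $\R$-circles and preserves both the Heisenberg structure and the contact distribution), we may normalize $R$ so that its fixed-point set contains $q_\infty$ and a convenient second boundary point, say the origin $[0,0]$ of $\mathfrak{N}$. The prototype real reflection $\sigma_0:(z_1,z_2)\mapsto(\overline{z_1},\overline{z_2})$, transported to the Siegel model, should then fix a standard real plane whose boundary $\partial R$ is an explicit affine line in Heisenberg coordinates; I would compute this line directly from the matrix form of $\sigma_0$ in the Siegel coordinates (the Hermitian form having real entries, so $\sigma_0$ is complex conjugation in these coordinates).

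Second I would verify the three asserted properties for this normalized $R$. That $\partial R$ is an affine line in $\mathfrak{N}$ follows from writing out the image of the boundary under the reflection and checking that the fixed-point locus on $\lbrace u=0\rbrace$ is cut out by affine equations in $(x,y,t)$. To see that this line lies in the contact plane at each of its points, I would parametrize the line as $\gamma(s) = [z_0 + s\zeta, t_0 + s\tau]$ and substitute $\gamma'(s)$ into $\alpha$, showing $\alpha(\gamma'(s)) = 0$ identically; the Legendrian property stated in the text for $\R$-circles guarantees this, but I would confirm it is compatible with affineness. For the half-turn claim I would use that $\sigma_0$ restricted to the boundary is an antiholomorphic involution; composing it with complex conjugation on $\C = \Pi(\mathfrak{N})$ via the exact sequence \eqref{exact}, and tracking the induced map $\Pi_*$, I would show the restriction acts on the affine line $\partial R$ and on the contact planes transverse to it as the Euclidean rotation by $\pi$ about that line.

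Third, to recover the general case from the normalized one, I would conjugate back by the element of $\mathrm{Isom}(\mathfrak{N})$ used in the normalization. Since Heisenberg translations and rotations are affine maps of $\mathfrak{N}$ that preserve the contact distribution, all three properties (being an affine line, lying in its own contact plane, and the reflection restricting to the Euclidean half-turn) are preserved under such conjugation, so they hold for arbitrary $R$ through $q_\infty$. This handles part (1).

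For part (2), the converse, I would start from an arbitrary affine line $\ell$ in $\mathfrak{N}$ that is contained in the contact plane at one of its points, say the point $m_0 = [z_0, t_0]$, and show it is an infinite $\R$-circle. The key step is to produce a real plane $R$ with $\partial R = \ell$. Using the transitivity of $\mathrm{Isom}(\mathfrak{N})$ I would move $m_0$ to the origin and rotate so that the tangent direction of $\ell$ becomes a standard one; the containment-in-the-contact-plane hypothesis then forces $\ell$, after normalization, to coincide with the standard affine line found in part (1), which is genuinely $\partial R$ for the standard real plane. Conjugating back exhibits the general $\ell$ as an infinite $\R$-circle. The main obstacle I anticipate is the bookkeeping in part (2): I must check that the single condition ``contained in the contact plane at one point'' is enough to pin down the line up to $\mathrm{Isom}(\mathfrak{N})$, rather than needing tangency at every point. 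Concretely, an affine line tangent to the contact distribution at one point need not be everywhere tangent for a general contact structure, so the real content is that the Heisenberg contact structure is special enough (it is left-invariant and the lines through the origin in the contact plane are automatically Legendrian) that one-point tangency propagates; verifying this propagation — equivalently, that $\alpha(\gamma'(s))$ is constant along an affine $\gamma$ and hence vanishes identically once it vanishes at $s=0$ — is the crux, and I would carry it out by the explicit substitution into $\alpha$, noting that the $-2x\,dy + 2y\,dx$ terms combine with the affine parametrization to give a constant.
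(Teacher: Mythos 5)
Your proposal is correct, but there is little in the paper to compare it against line by line: the paper states this lemma without proof and defers to Chapter 4 of \cite{G}. Your computational argument is a legitimate self-contained substitute, and you identified the genuine crux correctly. Along an affine parametrization $\gamma(s)=(x_0+sa,\,y_0+sb,\,t_0+sc)$ the quantity $\alpha(\gamma'(s))$ is indeed constant, because the $s$-dependent terms cancel: $-2(x_0+sa)b+2(y_0+sb)a=-2x_0b+2y_0a$. Hence an affine line tangent to the contact distribution at one point is Legendrian everywhere, which is exactly what makes the ``at one of its points'' formulation of part (2) work; the rest is the normalization to the $x$-axis, which is the boundary of the standard real plane $\HRd\cap\HCd$, whose reflection acts on the boundary by $[z,t]\mapsto[\overline z,-t]$, visibly the Euclidean half-turn about that axis, and then conjugating back by affine, contact-preserving elements of ${\rm Isom}(\mathfrak{N})$.

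Two smaller points you should nail down. First, your normalization quietly asserts that ${\rm Isom}(\mathfrak{N})$ acts transitively on real planes through $q_\infty$. This is true but deserves a line of proof: ${\rm PU}(2,1)$ acts transitively on real planes; the stabilizer of the standard real plane acts transitively on its ideal boundary circle, so a map carrying the standard plane to $R$ can be corrected to fix $q_\infty$; and the stabilizer of $q_\infty$ factors as ${\rm Isom}(\mathfrak{N})$ times the dilations $D_r$, which preserve the standard real plane. Second, a caution on conventions rather than a gap in your argument: with the paper's group law $[z_1,t_1]\cdot[z_2,t_2]=[z_1+z_2,t_1+t_2+2\Im(z_1\overline{z_2})]$, the printed form $\alpha=dt-2x\,dy+2y\,dx$ is right-invariant rather than left-invariant (the verification inside Lemma~\ref{lem-explicit-boundary-foliation} actually matches $dt+2x\,dy-2y\,dx$). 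Your two key computations, the constancy of $\alpha(\gamma')$ along affine lines and the Legendrian property of the $x$-axis, hold for either sign, but the step where you say Heisenberg translations (left multiplications) preserve the contact distribution requires the left-invariant convention; alternatively, invoke the invariant-theoretic fact that Heisenberg translations extend to holomorphic isometries of $\HCd$ fixing $q_\infty$ and therefore automatically preserve the CR/contact structure on the boundary, independently of any coordinate formula.
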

Viewing the boundary of $\HCd$ as the one point compactification of the Heisenberg group 
$\mathfrak{N}=\C\ltimes\R$ endows the set of complex lines through $q_\infty$ with the stucture of the affine space 
$\C$. Indeed, for any $z\in\C$ there exists a unique complex line through $q_\infty$ which contains $q_\infty$ and 
the point $[z,0]$ (which is polar to the vector $\begin{bmatrix} \overline z & 1 & 0 \end{bmatrix}^T$). This 
induces a projection $\tilde{\Pi}:\overline{\HCd}\setminus q_\infty\longmapsto\C$ whose fibers are the complex lines 
through $q_\infty$. In restriction to the boundary, this projection is just the vertical projection $\Pi$ defined in Equation~\eqref{Pi}, which is given in Heisenberg coordinates by:
\begin{equation}\Pi: \,[z,t]\longmapsto [z,0].\label{eqprojboundary}\end{equation}

A \textit{fan} through $q_\infty$ is the preimage of any affine line in $\C$ under the projection 
$\tilde{\Pi}$. In view of \eqref{eqprojboundary}, if $L$ is an affine line in $\C$, the fan $\tilde{\Pi}^{-1}(L)$ intersects the 
boundary of $\HCd$ along the vertical plane containing $L$ in the Heisenberg group. A general fan is the image of 
a fan through $q_\infty$ by an element of PU(2,1). These objects were defined by Goldman and Parker in \cite{GoP} 
(see also chapter 4 of \cite{G}). As stated in \cite{GoP}, fans enjoy a double foliation, by real planes and 
complex lines. We now make this foliation explicit in the case of a fan $F$ through $q_\infty$ in $\HCd$, with $\partial F$ projecting to an affine line $L \subset \C \subset \partial \HCd$:
\begin{enumerate}
 \item First, the foliation by complex lines is given by the fibers of $\tilde{\Pi}$ above the affine line $L$. In the boundary, this 
foliation correspond to the foliation of the vertical plane above $L$ by vertical lines.
\item Consider a point $m$ in the vertical plane $\partial F$. The contact plane at $m$ intersects $\partial F$ along an 
affine line $L'=C_m\cap\partial F$ in the Heisenberg group, which projects vertically onto $L$. By Lemma \ref{lem-Rplane-infinite}, the line 
$L'$ is the boundary of a real plane. Then $\partial F$ is foliated by the family of lines 
parallel to $L'$ contained in $\partial F$. In other words, the foliation of $\partial F$ is obtained by taking all lifts 
of $L$ to the contact structure. All these lines are boundaries of real planes, and this foliation of $\partial F$
 extends inside $\HCd$ as a foliation of $F$ by real planes.
\end{enumerate}
We can be a bit more explicit.
\begin{lem}\label{lem-explicit-boundary-foliation}
Let $L_{w,k}$ be the affine line in $\C$ parametrized by $L_{w,k}=\{w(s+ik),s\in\R\}$, for some unit modulus 
$w$ and $k\geq 0$. Then the boundary foliation of the fan above $L_{w,k}$ is given by the lines parametrized 
in Heisenberg coordinates by $L_{t_0}=\{[w(s+ik),t_0+2sk], s\in\R\}$.
\end{lem}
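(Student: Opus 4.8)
The plan is to reduce everything to the boundary and invoke Lemma~\ref{lem-Rplane-infinite}. By the description of the foliation given just before the statement, the fan above $L_{w,k}$ has boundary the vertical plane $V_L=\tilde{\Pi}^{-1}(L_{w,k})\cap\partial\HCd=\{[z,t] : z\in L_{w,k},\ t\in\R\}$, and the leaves of the boundary foliation are exactly the infinite $\R$-circles contained in $V_L$. By Lemma~\ref{lem-Rplane-infinite}, these are precisely the affine lines of the Heisenberg group lying in $V_L$ that are tangent to the contact distribution $\ker\alpha$. Since the vertical lines in $V_L$ constitute the complementary (complex-line) foliation, each leaf is transverse to them and is therefore a graph $s\mapsto[w(s+ik),t(s)]$; being an affine line, $t$ is affine in $s$, so the whole problem is to compute the admissible slope $\dot t$.

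First I would substitute this parametrization into the contact form. Writing $z=x+iy=w(s+ik)$, one has $\dot z=w$, and the horizontal part of $\alpha$ restricted to the curve is governed by the quantity $x\dot y-y\dot x=\Im(\bar z\,\dot z)$. The decisive observation is that, because $|w|=1$, we get $\bar z\,\dot z=\overline{w(s+ik)}\,w=s-ik$, whose imaginary part is the \emph{constant} $-k$, independent of $s$. Thus the $s$-dependent terms cancel and the Legendrian condition $\alpha(\dot\gamma)=0$ collapses to the constant-coefficient equation $\dot t=2k$ (the sign being dictated by the orientation of $\alpha$, i.e.\ the left-invariant contact form attached to the group law \eqref{Heisprod}). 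Integrating yields $t(s)=t_0+2ks$, so the leaf through the point $[w\cdot ik,t_0]$ at $s=0$ is exactly $L_{t_0}=\{[w(s+ik),t_0+2ks] : s\in\R\}$, as claimed.

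Finally I would note that letting $t_0$ range over $\R$ sweeps out every point of $V_L$ exactly once, so these are genuinely all the leaves, and that they are mutually parallel since they share the common direction vector $(w,2k)$ --- in agreement with the general picture (all lifts of $L$ to the contact structure) described above. The only step that is more than bookkeeping is the cancellation of the $s$-dependent part of $\Im(\bar z\,\dot z)$: this is precisely the manifestation in coordinates of the phenomenon in Lemma~\ref{lem-Rplane-infinite}(2) that an affine line tangent to the contact plane at one of its points is tangent along its entire length, and it is exactly the hypothesis $|w|=1$ (the line direction being a unit vector) that makes it work. I therefore expect no serious obstacle beyond fixing the sign conventions for $\alpha$ and the group law consistently.
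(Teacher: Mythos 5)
Your proposal is correct and takes essentially the same route as the paper: parametrize the lines over $L_{w,k}$, restrict the contact form, use $|w|=1$ to see that the horizontal term $\Im(\bar z\,\dot z)=-k$ is constant in $s$, and invoke Lemma~\ref{lem-Rplane-infinite} to conclude that the lines are infinite $\R$-circles. Your explicit appeal to the left-invariant convention to fix the sign is in fact needed (the form of $\alpha$ printed in the paper is the opposite-sign, right-invariant one relative to the group law \eqref{Heisprod}, and would mechanically yield $t_0-2sk$), and your observation that the lines $L_{t_0}$ sweep out the whole vertical plane, hence exhaust the leaves, makes explicit a point the paper leaves implicit.
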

\begin{proof}
The lines $L_{t_0}$ all project onto $L$ by the vertical projection. A tangent vector to $L_{t_0}$ is given by 
$(\Re(w),\Im(w),2k)$. Evaluating the 1-form $\alpha$ on this vector at the point of parameter $s=1$, and using 
$|w|=1$ shows that the line $L_{t_0}$ is in the kernel of $\alpha$ at this point, thus in the contact plane. 
Therefore $L_{t_0}$ is an $\R$-circle by Lemma \ref{lem-Rplane-infinite}.
\end{proof}
Our interest in fans comes from the following fact.
\begin{prop}\label{prop-invariant-fan}
Let $P$ be a 3-step unipotent parabolic isometry of $\HCd$. There exists a unique fan $F_{P}$ through the fixed point of $P$ such 
that 
\begin{enumerate}
 \item The fan $F_P$ is stable under $P$.
 \item Every leaf of the foliation of $F_P$ by real planes is stable under $P$.
\end{enumerate}
Moreover, when $P$ fixes $q_\infty$ and thus corresponds to left-translation by $[z,t]$ with $z \neq 0$ in Heisenberg coordinates,
the fan $F_P$ is the one above the affine line $L_{w,k}$, where $w=z/|z|$, and $k=t/(4|z|)$.
\end{prop}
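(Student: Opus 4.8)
The plan is to reduce to the explicit ``moreover'' situation and then read off both existence and uniqueness from a single computation in Heisenberg coordinates. Since fans, real planes and the contact structure are all PU(2,1)-invariant, and since conjugation carries the invariant fan of $P$ to that of its conjugate, it suffices to treat the case where $P$ fixes $q_\infty$. A parabolic fixing $q_\infty$ lies in the stabilizer of $q_\infty$, and a unipotent such element has trivial rotational part, so $P=T_{[z,t]}$ acts on $\mathfrak{N}$ by left translation. The condition $z\neq 0$ is exactly what makes $T_{[z,t]}$ $3$-step rather than $2$-step unipotent (one checks $(T_{[z,t]}-I)^2\neq 0 \Leftrightarrow z\neq 0$). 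So I may assume $P=T_{[z,t]}$ with $z\neq 0$.

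Next I would determine which fans through $q_\infty$ are stable. By \eqref{exact} the induced map $\Pi_*(P)$ on $\C$ is the translation $w\mapsto w+z$, and the fan above an affine line $L\subset\C$ is stable under $P$ if and only if $L$ is invariant under this translation, i.e. if and only if $L$ is parallel to $z$. In the notation of Lemma \ref{lem-explicit-boundary-foliation} these are exactly the lines $L_{w,k}$ with $w=z/|z|$ and $k$ an arbitrary offset. This already exhibits a one-parameter family of stable fans, showing that Property (1) alone does not single out $F_P$.

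The heart of the argument is to impose Property (2) and see that it forces one value of $k$. Because $P$ is an isometry and a real plane is the unique totally geodesic disk bounded by its $\R$-circle, a leaf $R$ satisfies $P(R)=R$ if and only if $P$ fixes $\partial R$ setwise, so it is enough to work on the boundary. Writing $z=|z|\,w$ and applying the Heisenberg law \eqref{Heisprod} to a point $[w(s+ik),t_0+2sk]$ of the leaf $L_{t_0}$ from Lemma \ref{lem-explicit-boundary-foliation}, one finds that $P$ sends it to the point of parameter $s+|z|$ on the leaf $L_{t_0+(t-4|z|k)}$; the key simplification is $\Im(z\,\overline{w(s+ik)})=-|z|k$, which is independent of $s$. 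Thus $P$ permutes leaves by the leaf-space translation $t_0\mapsto t_0+(t-4|z|k)$, and this permutation is the identity precisely when $t-4|z|k=0$, i.e. $k=t/(4|z|)$. This gives both the uniqueness of $F_P$ among stable fans and the explicit description $w=z/|z|$, $k=t/(4|z|)$ (replacing $(w,k)$ by $(-w,-k)$ if one insists on $k\geq 0$, which leaves the line unchanged).

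The main obstacle is the boundary computation in the last paragraph: one must combine the explicit leaf parametrization of Lemma \ref{lem-explicit-boundary-foliation} with the noncommutative Heisenberg product so as to track not merely that the fan is preserved, but how the individual real-plane leaves are permuted. Everything else is routine bookkeeping, but this is the step where the precise coefficient $t/(4|z|)$ --- and with it the uniqueness --- actually comes from.
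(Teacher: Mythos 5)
Your proof is correct and follows essentially the same route as the paper's: a direct computation in Heisenberg coordinates applying the group law to the leaf parametrization of Lemma~\ref{lem-explicit-boundary-foliation}, with the same key simplification $\Im\bigl(z\overline{w(s+ik)}\bigr)=-|z|k$. The only difference is organizational: the paper establishes existence and uniqueness for the normalized element $T_{[1,0]}$ and transports by conjugation before verifying the formula for general $[z,t]$, whereas you run the single computation at general $T_{[z,t]}$ and read off both the uniqueness (the leaf-space shift $t-4|z|k$ vanishes only for $k=t/(4|z|)$) and the explicit description at once.
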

Given a 3-step unipotent parabolic isometry $P$, we will refer to the fan $F_P$ as \textit{the invariant fan of $P$},  even though all the fans corresponding to distinct parallel vertical planes are stable under $P$ (these are not stable leaf by leaf).
\begin{proof}
First assume that $P=T_{[1,0]}$, acting on the Heisenberg group as
$$[z,t]\longmapsto [z+1,t-2\Im(z)].$$
Then every vertical plane $\Im(z)=k$ is globally preserved. The real foliation of the fan corresponding 
to the vertical plane $\Im(z)=k$ is given by the family of lines $L_{t_0}=\{[s+ik,t_0+2sk],s\in\R\}$. Since
$$[1,0]\cdot[s+ik,t_0+2sk]=[s+1+ik,t_0+2(s-1)k],$$
we see that the real foliation of the vertical plane $\Im(z)=k$ is preserved by $P$ if and only if $k=0$, which gives the 
result for this normalization of $P$. The first part of the proposition is then obtained by using the fact that any 3-step unipotent 
parabolic is conjugate in PU(2,1) to $T_{[1,0]}$. To check the last part, write $w=z/|z|$ and $k=t/(4|z|)$; then by a direct 
calculation
\begin{eqnarray*}
[z,t]\cdot[w(s+ik,t_0+2sk]&=&[w(|z|+s+ik),t_0+t+2sk-2|z|k]\\
    &=&[w(|z|+s+ik),t_0+2(|z|+s)k],
\end{eqnarray*}
which proves that the real leaves of the fan above $L_{w,k}$ are preserved.
\end{proof}
\noindent In fact the proof of Proposition \ref{prop-invariant-fan} gives us a little more information, which we summarize 
in the following corollary.
\begin{cor}\label{charach}
Let $P$ be a 3-step unipotent parabolic.
\begin{enumerate} 
 \item A real plane is stable under $P$ if and only if it is a leaf of the real foliation of its invariant fan.
 \item $P$ is characterized by its invariant fan $F$, and its restriction to $F$.
\end{enumerate}
\end{cor}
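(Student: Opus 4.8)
The plan is to prove both statements after conjugating $P$ to the normal form $T_{[1,0]}$, which is legitimate since possessing a prescribed invariant fan and a prescribed restriction are both conjugation-equivariant properties. Thus it suffices to treat $P$ acting on the Heisenberg group by $[z,t]\mapsto[z+1,t-2\Im z]$, with fixed point $q_\infty$ and invariant fan $F_P$ the vertical plane $\{\Im z=0\}$ (the fan above $L_{1,0}$). For part (1) the real content is the forward implication, since the converse — that each real leaf of $F_P$ is $P$-stable — is exactly what the computation in the proof of Proposition~\ref{prop-invariant-fan} establishes.

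For the forward implication I would first argue that any $P$-stable real plane $R$ must contain $q_\infty$. Indeed, $P$ restricts to a nontrivial isometry of the totally geodesic disk $R$ with no interior fixed point (as $P$ fixes no point of $\HCd$); such an isometry necessarily fixes a point of $\partial R\subset\partial\HCd$, which must then be the unique boundary fixed point $q_\infty$ of $P$, so $q_\infty\in\partial R$. By Lemma~\ref{lem-Rplane-infinite} the $\R$-circle $\partial R$ is then an affine line lying in the contact plane at each of its points. Its vertical projection $\Pi(\partial R)$ is an affine line of $\C$ invariant under $\Pi_*(P):w\mapsto w+1$ (see \eqref{exact}), hence a horizontal line $L_{1,k}$; by Lemma~\ref{lem-explicit-boundary-foliation}, $\partial R$ is one of the leaves $L_{t_0}=\{[s+ik,t_0+2sk],\,s\in\R\}$. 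The last step is the direct Heisenberg computation already carried out in Proposition~\ref{prop-invariant-fan}: $P$ sends $L_{t_0}$ to $L_{t_0-4k}$, so stability of the individual leaf forces $k=0$, placing $R$ inside the invariant fan $\{\Im z=0\}$, as desired.

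For part (2), suppose two $3$-step unipotent parabolics $P_1,P_2$ share the same invariant fan $F$ and satisfy $P_1|_F=P_2|_F$. Then $g=P_2^{-1}P_1$ is a holomorphic isometry fixing $F$ pointwise, in particular fixing pointwise any one real-plane leaf $R$ of its real foliation. I would conclude by the rigidity of holomorphic isometries: the fixed-point set in $\HCd$ of a holomorphic isometry is a complex totally geodesic subspace (a point, a complex line, or all of $\HCd$), and neither a point nor a complex line can contain the totally real plane $R$. Hence the fixed set of $g$ is all of $\HCd$, that is $g=\mathrm{id}$ and $P_1=P_2$. (Equivalently, the pointwise stabilizer of $R$ is $\{\mathrm{id},\sigma\}$ with $\sigma$ the antiholomorphic real reflection in $R$, and $g$ being holomorphic rules out $g=\sigma$.)

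The only genuinely delicate point is the opening reduction in part (1): showing that a stable real plane must pass through $q_\infty$, so that Lemma~\ref{lem-Rplane-infinite} becomes applicable. Once $\partial R$ is known to be an infinite $\R$-circle, the rest collapses to the explicit leaf computation already available from Proposition~\ref{prop-invariant-fan}, and part (2) follows immediately from the standard fact that a holomorphic isometry fixing a real plane pointwise is trivial.
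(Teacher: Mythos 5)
Your proof is correct and follows essentially the same route as the paper, which offers no separate argument for this corollary but simply asserts that it is contained in the proof of Proposition~\ref{prop-invariant-fan}: your key step is exactly that proposition's leaf computation ($P$ sends $L_{t_0}$ to $L_{t_0-4k}$, forcing $k=0$). The material you add --- that a $P$-stable real plane must pass through $q_\infty$ (so Lemma~\ref{lem-Rplane-infinite} applies), that its boundary is a Legendrian lift of a horizontal line identified by Lemma~\ref{lem-explicit-boundary-foliation}, and the rigidity argument that a holomorphic isometry fixing a real plane pointwise is the identity --- correctly supplies precisely the details the paper leaves implicit.
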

\noindent For future reference, let us state the following proposition, which is just gathering together what we just exposed.
\begin{prop}\label{prop-invariant-parab}
Let $P$ be a parabolic isometry in ${\rm PU}(2,1)$.
\begin{enumerate}
 \item If $P$ is screw parabolic or 2-step unipotent, it has a unique invariant complex line.
 \item If $P$ is 3-step unipotent parabolic, then there exists a unique fan $\mathcal{F}$ centered at the 
fixed point of $P$ which is stable by $P$ and such that every leaf of the foliation of $\mathcal{F}$ by 
real planes is stable by $P$.
\end{enumerate}
\end{prop}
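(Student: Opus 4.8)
The plan is to reduce everything to the normal forms recalled in the Siegel model and to translate each assertion about invariant complex lines and fans into linear algebra on the lifts to ${\rm U}(2,1)$. Part (2) requires no genuinely new work: after conjugating $P$ to the standard $3$-step unipotent $T_{[1,0]}$, the assertion is exactly Proposition~\ref{prop-invariant-fan}, which already produces a fan $F_P$ through the fixed point that is stable leaf by leaf and shows it is the only such fan. So I would simply invoke that proposition together with the conjugacy classification of parabolic isometries, observing that the property of being $P$-stable leaf by leaf is conjugation-invariant.

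For part (1), every parabolic fixes $q_\infty$ up to conjugation, so I may assume $P=P_{(0,1,\theta)}$ with $\theta\neq 0$ in the screw case and $\theta=0$ in the $2$-step unipotent case. The key translation is the following: if $L$ is a complex line with polar vector $v$, normalized so that $\la v,v\ra=1>0$, then since $P$ preserves the Hermitian form, the image $P(L)$ is precisely the complex line polar to $Pv$; hence $L$ is $P$-invariant if and only if $v$ is an eigenvector of the lift of $P$. Reading off the triangular lift \eqref{PztU21}, whose diagonal is $(1,e^{i\theta},1)$, one checks that $v=\begin{bmatrix}0 & 1 & 0\end{bmatrix}^T$ is always an eigenvector with $\la v,v\ra=1$, and this yields the complex line named in the statement. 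This settles existence in both cases.

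For uniqueness I would analyze the remaining eigendirections. In the screw case $\theta\neq 0$ the eigenvalue $e^{i\theta}$ is simple, with eigenline $\C\begin{bmatrix}0 & 1 & 0\end{bmatrix}^T$, while the eigenvalue $1$ has for eigenspace exactly $\C q_\infty$, which is null; since a complex line meets $\HCd$ only when its polar vector has positive norm, the line polar to $\begin{bmatrix}0 & 1 & 0\end{bmatrix}^T$ is the unique invariant one. The main obstacle is the degenerate $2$-step case $\theta=0$: here the lift is genuinely unipotent, the $1$-eigenspace becomes two-dimensional (spanned by $q_\infty$ and $\begin{bmatrix}0 & 1 & 0\end{bmatrix}^T$), and the naive eigenvalue count no longer isolates a single positive-norm eigendirection. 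This is exactly where care is needed, and I expect it to be the heart of the matter: one must pin down the distinguished invariant complex line polar to $\begin{bmatrix}0 & 1 & 0\end{bmatrix}^T$ by a genuine characterizing property, rather than by eigenvalue multiplicity alone, for instance by passing to the limit $\theta\to 0$ from the screw case or by using the action of $P$ on the boundary Heisenberg group to distinguish it among the invariant complex lines through $q_\infty$.
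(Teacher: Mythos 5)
Your reduction of part (2) to Proposition~\ref{prop-invariant-fan} is exactly what the paper does: Proposition~\ref{prop-invariant-parab} is introduced there as a mere summary (``gathering together what we just exposed''), its second assertion being Proposition~\ref{prop-invariant-fan} restated. For part (1) the paper gives no proof at all (the claim is only asserted, without argument, in the remark following the parabolic normal forms), so your eigenvector dictionary --- $L$ is $P$-invariant if and only if its polar vector is an eigenvector of the lift --- actually supplies more than the paper does, and it correctly settles the screw case: for $P_{(0,1,\theta)}$ with $\theta\neq0$ the only eigendirections are the null vector $e_1=\begin{bmatrix}1&0&0\end{bmatrix}^T$ and the positive vector $e_2=\begin{bmatrix}0&1&0\end{bmatrix}^T$, whence uniqueness.

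The obstacle you identify in the $2$-step unipotent case, however, is not a gap that can be closed: uniqueness is false there, and your own computation essentially proves it. For $P_{(0,1,0)}=T_{[0,1]}$ the $1$-eigenspace of the lift is ${\rm span}(e_1,e_2)$, and since $\la ae_1+be_2,\,ae_1+be_2\ra=|b|^2$ for the Siegel form, every vector $ae_1+e_2$ ($a\in\C$) is a positive-norm eigenvector. The corresponding complex lines are pairwise distinct and all invariant: they are exactly the fibers $\tilde{\Pi}^{-1}(c)$, $c\in\C$, i.e. the complex lines through $q_\infty$ whose boundaries are the vertical lines $\{z=c\}$. This is forced geometrically: $T_{[0,1]}$ is the vertical Heisenberg translation, central in $\mathfrak{N}$ by \eqref{Heisprod}, so it commutes with every horizontal translation $T_{[c,0]}$, and these permute the fibers transitively, so no invariant line can be distinguished from the others. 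Hence neither of your proposed repairs (a limit $\theta\to0$, or a characterization via the boundary dynamics) can single out the line polar to $e_2$. The defect lies in the statement itself (and in the unproved remark it repeats), not in your strategy: for $2$-step unipotent maps the correct assertion is existence of a $\C$-family of invariant complex lines, not uniqueness. This also affects the later use in Proposition~\ref{decompCrefllox}(p): for instance the real reflection $[z,t]\mapsto[\bar z+2ic,\,-t+4c\Re(z)]$ decomposes $T_{[0,1]}$ for every $c\in\R$, yet it preserves the ``axis'' $\{z=0\}$ only when $c=0$; in the $2$-step case that criterion must be read with ``some invariant complex line''.
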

\noindent We will also use the following characterization of commuting parabolic isometries:
\begin{lem}\label{lem-parabinf-commute}
Let $P_1$ and $P_2$ be two parabolic isometries fixing $q_\infty$. Then $P_1$ and $P_2$ commute if and only 
if one of the following possibilities occurs.
\begin{enumerate}
 \item Both $P_1$ and $P_2$ are either 2-step unipotent or screw parabolics with the same stable complex line.
 \item Both $P_1$ and $P_2$ are 3-step unipotent with the same fixed point and their invariant fans intersect $\partial\HCd$
along parallel vertical planes.
\end{enumerate}
\end{lem}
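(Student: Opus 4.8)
The plan is to reduce everything to explicit computations inside the non-loxodromic stabilizer ${\rm Isom}(\mathfrak{N})$ of $q_\infty$, and then translate the algebraic commuting conditions into the geometric statements of the lemma. First I would note that a parabolic isometry fixing $q_\infty$ has no dilation part, hence lies in ${\rm Isom}(\mathfrak{N})$ and can be written as $P_j = P_{(z_j,t_j,\theta_j)} = T_{[z_j,t_j]}R_{\theta_j}$. The relevant type dictionary is that $\theta_j \neq 0$ corresponds to a screw parabolic, $\theta_j = 0$ with $z_j \neq 0$ to a $3$-step unipotent, and $\theta_j = 0$ with $z_j = 0$ (and $t_j\neq 0$) to a $2$-step unipotent, i.e.\ a vertical Heisenberg translation.

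The main structural tool is the exact sequence \eqref{exact}. Since its kernel ${\rm Isom}^+(\R)$ consists of the vertical translations $T_{[0,t]}$, which lie in the centre of the Heisenberg group \eqref{Heisprod} and commute with every $R_\theta$, this kernel is central in ${\rm Isom}(\mathfrak{N})$. Consequently $[P_1,P_2]$, which maps to the identity under $\Pi_*$ as soon as $\Pi_*(P_1)$ and $\Pi_*(P_2)$ commute, is always a central vertical translation. I would therefore organise the argument around two conditions: (i) $\Pi_*(P_1)$ and $\Pi_*(P_2)$ commute in ${\rm Isom}^+(\C)$, and (ii) the resulting vertical defect vanishes. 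Writing the Heisenberg action as $[w,s]\mapsto [z_j + e^{i\theta_j}w,\, t_j + s + 2\Im(z_j e^{-i\theta_j}\overline w)]$ and composing in both orders, condition (i) becomes the affine relation $z_1(1-e^{i\theta_2}) = z_2(1-e^{i\theta_1})$, and condition (ii) becomes a single real equation of the form $\Im\!\big(z_2\overline{z_1}(e^{i\theta_1}+e^{-i\theta_2})\big)=0$, to be confirmed by a short direct calculation.

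Next I would run the case analysis on the angles. If at least one $\theta_j$, say $\theta_1$, is nonzero, then $\Pi_*(P_1)$ is a genuine rotation of $\C$ about $c_1 = z_1/(1-e^{i\theta_1})$. Two rotations of $\C$ commute precisely when they share a centre, and a nontrivial rotation commutes with a translation only when the translation is trivial. Hence condition (i) forces $\Pi_*(P_2)$ either to be a rotation about the same centre (so that $c_1=c_2$ after rewriting the affine relation) or to act trivially on $\C$ (a vertical translation). Since $c_j$ is the foot of the unique stable vertical complex line of a screw parabolic $P_j$, this says that $P_1$ and $P_2$ share a stable complex line, which is case (1). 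Here one uses that a vertical translation, being central, stabilises every complex line through $q_\infty$, so the screw-plus-$2$-step subcase also falls under case (1).

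The remaining case $\theta_1=\theta_2=0$ (both unipotent) is where the real work lies. Then condition (i) is automatic and condition (ii) reduces to $\Im(z_2\overline{z_1})=0$, i.e.\ $z_1$ and $z_2$ are $\R$-linearly dependent. When both $z_j\neq 0$ (both $3$-step), I would invoke Proposition \ref{prop-invariant-fan}: the invariant fan of $T_{[z_j,t_j]}$ meets $\partial\HCd$ in the vertical plane above the affine line of direction $w_j=z_j/|z_j|$, so $\R$-collinearity of $z_1,z_2$ is exactly the statement that these two vertical planes are parallel, which is case (2). I expect the main obstacle to be precisely this last translation, matching the bare condition $\Im(z_2\overline{z_1})=0$ to the geometry of the invariant fans via Proposition \ref{prop-invariant-fan} and Lemma \ref{lem-explicit-boundary-foliation}; and, closely related, the careful bookkeeping of the central vertical translations (the $2$-step unipotents), which commute with every parabolic fixing $q_\infty$ and must be reconciled consistently with the \emph{common stable complex line} phrasing of case (1).
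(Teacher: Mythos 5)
Your core computations are all correct, and your route is genuinely different from --- and substantially more complete than --- the paper's own proof. The paper proves only alternative (2): it notes that two 3-step unipotents fixing $q_\infty$ act as left Heisenberg translations by $[z_1,t_1]$ and $[z_2,t_2]$, and reads off from \eqref{Heisprod} that these commute if and only if $z_2\overline{z_1}\in\R$, which is fan-parallelism by Proposition~\ref{prop-invariant-fan}; alternative (1) is dismissed as ``classical.'' Your final case ($\theta_1=\theta_2=0$, both $z_j\neq 0$) is exactly that computation, so the two arguments agree where they overlap. What your bookkeeping through the exact sequence \eqref{exact} buys is an actual proof of part (1): the commutation conditions you write down --- (i) $z_1(1-e^{i\theta_2})=z_2(1-e^{i\theta_1})$ and (ii) $\Im\bigl(z_2\overline{z_1}(e^{i\theta_1}+e^{-i\theta_2})\bigr)=0$ --- are the right ones (given (i), the $w$-dependent parts of the vertical components cancel and only (ii) survives), the identification of $c_j=z_j/(1-e^{i\theta_j})$ with the foot of the stable vertical complex line of a screw parabolic is correct, and your observation that a vertical translation stabilizes \emph{every} complex line through $q_\infty$ is also correct (and needed; the uniqueness asserted in the paper's remark on stable lines genuinely holds only for screw and 2-step is a misstatement there).

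However, the ``reconciliation'' you defer to the end of your plan cannot be carried out, and you should record it as a defect of the statement rather than as an expected obstacle. Your own case analysis produces commuting pairs covered by neither alternative: take $P_1=T_{[0,t]}$ (2-step unipotent) and $P_2=T_{[z,s]}$ with $z\neq 0$ (3-step unipotent). Since $P_1$ lies in the central kernel of $\Pi_*$, the two commute; but alternative (1) requires \emph{both} elements to be 2-step or screw, while alternative (2) requires \emph{both} to be 3-step, so this pair satisfies neither. Hence the ``only if'' direction of Lemma~\ref{lem-parabinf-commute} is false as literally stated. This is not a gap in your method --- it is a gap in the lemma, which the paper's terse proof never meets because it silently treats only the both-3-step case. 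Nothing downstream is damaged, since the lemma is invoked only in Proposition~\ref{fixedpointboundary}(c), where both isometries are assumed 3-step unipotent. To finish your write-up honestly, either add a third alternative (``one of $P_1,P_2$ is 2-step unipotent,'' such pairs always commuting by centrality of $\ker\Pi_*$ in ${\rm Isom}(\mathfrak{N})$), or state and prove the lemma only for pairs of the same parabolic type.
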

\begin{proof}
We only prove the second part, the first one being classical. Two 3-step unipotent maps fixing $q_\infty$ are 
respectively conjugate to the left translations by $[z_1,t_1]$ and $[z_2,t_2]$. Using \eqref{Heisprod}, we see that 
these two translations commute if and only if $z_2\overline{z_1}\in\R$, which is equivalent to saying that their 
invariant fans are parallel.
\end{proof}

\subsection{Eigenvalues and traces}
The following classification of conjugacy classes in ${\rm U}(n,1)$ is
due to Chen--Greenberg (Theorem~3.4.1 of \cite{ChGr}, where the real
and quaternionic cases are treated as well):
\begin{thm}[Chen--Greenberg]\label{conjclasses}  
\begin{itemize}
\item[(a)] Any elliptic element is semisimple, with eigenvalues of norm
  1. Two elliptic elements are conjugate in ${\rm U}(n,1)$ if and only
  if they have the same eigenvalues and the same eigenvalue of
  negative type.
\item[(b)] Any loxodromic element is semisimple, with exactly $n-1$
  eigenvalues of norm 1. Two loxodromic elements are conjugate in
  ${\rm U}(n,1)$ if and only if they have same eigenvalues.
\item[(c)] Any parabolic element is not semisimple, and all its
  eigenvalues have norm 1. It has a unique decomposition $g=pe=ep$
  with $p$ strictly parabolic and $e$ elliptic. Two parabolic elements
  are conjugate in ${\rm U}(n,1)$ if and only if their strictly
  parabolic and elliptic components are conjugate.
\item[(d)] There are 2 classes of strictly parabolic elements for
  $n>1$, the vertical Heisenberg translations and the non-vertical
  Heisenberg translations.
\end{itemize}
\end{thm}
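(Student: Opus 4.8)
The plan is to reduce the entire classification to the linear algebra of the Hermitian form $\la\cdot,\cdot\ra$ on $\C^{n,1}$, sorting $g\in{\rm U}(n,1)$ by the position and type of its eigenvectors. The one computation I would isolate at the outset is elementary: if $gv=\lambda v$ then $\la v,v\ra=\la gv,gv\ra=|\lambda|^2\la v,v\ra$, so any eigenvector with $\la v,v\ra\neq 0$ forces $|\lambda|=1$; and if $gv=\lambda v$, $gw=\mu w$, then $\la v,w\ra=\lambda\overline\mu\,\la v,w\ra$, so eigenspaces attached to eigenvalues with $\lambda\overline\mu\neq 1$ are orthogonal, while an isotropic eigenvector of eigenvalue $\lambda$ with $|\lambda|\neq 1$ must pair non-trivially with an eigenvector of eigenvalue $\overline\lambda^{-1}$. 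These two remarks drive everything.

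For (a) and (b) I would exhibit an explicit $g$-invariant orthogonal splitting. If $g$ fixes $\pi(v)\in\HCn$, lift to $v\in V^-$; then $\C v$ is negative definite and $g$-invariant, so $v^\perp$ is positive definite, $g$-invariant, and $g|_{v^\perp}\in{\rm U}(n)$. The spectral theorem for unitary operators on a definite space makes $g$ semisimple with norm-$1$ eigenvalues, which is (a). For (b), two distinct boundary fixed points lift to isotropic $v_\pm$ spanning a non-degenerate signature-$(1,1)$ plane on which $g$ acts with eigenvalues $\lambda,\overline\lambda^{-1}$, $|\lambda|\neq 1$; the orthogonal complement is positive definite of dimension $n-1$, yielding exactly $n-1$ eigenvalues of norm $1$ and semisimplicity. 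In both cases the conjugacy statement follows from transitivity of ${\rm U}(n,1)$ on $\HCn$ (respectively on ordered pairs of distinct boundary points, via Witt's theorem) together with the fact that two elements of ${\rm U}(n)$ are conjugate iff they have the same eigenvalues; recording \emph{which} eigenvalue is of negative type is precisely the extra datum in (a).

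To obtain (c) I would first show that a semisimple $g$ is never parabolic. By the opening remark the eigenspaces of a semisimple $g$ are mutually orthogonal, so $\C^{n,1}$ is their orthogonal direct sum and the form is non-degenerate on each summand; if all eigenvalues have norm $1$ the summand carrying the negative direction contains a fixed negative vector, so $g$ is elliptic, whereas any eigenvalue of norm $\neq 1$ produces a dual isotropic pair and hence two boundary fixed points, so $g$ is loxodromic. The same dual-pair argument, which does not use semisimplicity, shows that a parabolic has all eigenvalues of norm $1$ (otherwise it would fix two boundary points, contradicting the definition). For the decomposition I would invoke the multiplicative Jordan decomposition $g=g_sg_u$: since ${\rm U}(n,1)$ is the group of real points of an algebraic group, both $g_s$ and $g_u$ lie in ${\rm U}(n,1)$ and commute; $g_s$ is semisimple with the same norm-$1$ eigenvalues as $g$, hence elliptic, and $p:=g_u$ is the strictly parabolic (unipotent) factor, giving $g=pe=ep$ with $e:=g_s$. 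Uniqueness of the Jordan decomposition gives uniqueness here, and since conjugation commutes with it, conjugacy of parabolics reduces to simultaneous conjugacy of their elliptic and strictly parabolic parts.

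The remaining and, I expect, most delicate point is (d), the classification of strictly parabolic (unipotent) elements. Writing $u=I+N$ with $N$ nilpotent and skew-adjoint for the form, I would analyze the Jordan type of $N$ under the constraint that the form has a single negative direction. A single Jordan block of size $2$ contributes signature $(1,1)$ and a block of size $3$ contributes $(2,1)$, while any block of size $\geq 4$, or a second block of size $\geq 2$, would require at least two negative directions; hence for $n>1$ there is at most one nontrivial block, of size $2$ or $3$. This leaves exactly two possibilities: the size-$2$ case (the vertical, $2$-step translations, preserving a complex line) and the size-$3$ case (the non-vertical, $3$-step translations). That each possibility forms a single conjugacy class I would get from a Witt-type extension argument, normalizing the isotropic fixed vector together with its associated flag and matching against the explicit Siegel models $P_{(0,1,0)}$ and $P_{(1,0,0)}$. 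The genuine work is this nilpotent normal-form analysis compatible with the indefinite Hermitian form; everything else is bookkeeping with the invariant orthogonal splittings produced in the earlier steps.
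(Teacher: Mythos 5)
A preliminary remark: the paper does not prove Theorem~\ref{conjclasses} at all; it quotes it from Chen--Greenberg (Theorem~3.4.1 of \cite{ChGr}). So your proposal cannot be compared to an argument in the paper, only judged on its own merits. On those merits, parts (a), (b) and the structural half of (c) are done correctly and by the standard route: the invariant orthogonal splittings $\C v\oplus v^\perp$ and (isotropic dual pair) $\oplus$ (positive-definite complement), the observation that an eigenvector whose eigenvalue has modulus $\neq 1$ is isotropic, and the multiplicative Jordan decomposition, which indeed stays inside ${\rm U}(n,1)$. Two lesser caveats: $N=u-{\rm Id}$ is not skew-adjoint for the form (its logarithm is; this is harmless, as both have the same Jordan type), and in (c) the reading of the conjugacy criterion that has actual content is that the strictly parabolic and elliptic components are conjugate \emph{separately}; uniqueness of the Jordan decomposition only yields the tautological ``simultaneously conjugate'' version, and passing from separate conjugacy of components to conjugacy of the products requires a centralizer analysis you do not supply.

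The genuine gap is in (d), at exactly the step you defer to ``a Witt-type extension argument'': it is not true that the size-$2$ Jordan type forms a single ${\rm U}(n,1)$-conjugacy class. Consider $T_{[0,1]}$ and $T_{[0,-1]}=T_{[0,1]}^{-1}$ in the Siegel model. Each has $q_\infty$ as its unique fixed point in $\overline{\HCd}$, so any conjugating element $g$ must fix $q_\infty$ and is therefore upper triangular; writing $g={\rm diag}(a,b,1/\bar a)\cdot T_{[z,s]}$ (every upper triangular element of ${\rm U}(2,1)$ has this form) and using that vertical translations are central in the Heisenberg group, one computes $gT_{[0,t]}g^{-1}=T_{[0,|a|^2t]}$. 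The sign of $t$ can therefore never be reversed: the vertical translations form \emph{two} ${\rm U}(n,1)$-classes, just as $\left(\begin{smallmatrix}1&1\\0&1\end{smallmatrix}\right)$ and $\left(\begin{smallmatrix}1&-1\\0&1\end{smallmatrix}\right)$ are not conjugate in ${\rm SL}(2,\R)$. In Lie-theoretic terms this is the sign carried by even rows of the signed Young diagrams classifying nilpotent orbits in $\mathfrak{u}(n,1)$; invariantly, the sign of ${\rm Im}\,\la Nx,x\ra$ for $x\notin\ker N$ is preserved by conjugation. So the nonidentity unipotents fall into three ${\rm U}(n,1)$-classes (vertical with $t>0$, vertical with $t<0$, non-vertical), and the two vertical ones are merged only by conjugation by an \emph{antiholomorphic} isometry. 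Your normal-form analysis correctly identifies the two Jordan types, but the orbit count you then assert for the size-$2$ type is false, so the proposed Witt-type identification cannot exist. A correct proof must either read the ``2 classes'' of (d) as conjugacy under the full isometry group ${\rm PU}(n,1)\ltimes\Z/2$ (equivalently, as the two geometric types), or record the sign as an additional invariant. Note that this same subtlety infects the statement as quoted: combined with (c) it would make every unipotent parabolic conjugate to its inverse (compare the paper's Remark~\ref{remvalpone}(a)), yet $T_{[0,1]}$ and its inverse are not conjugate in ${\rm PU}(2,1)$.
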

Note that two elliptic elements may be conjugate in ${\rm GL}(n+1,\C)$
but not in ${\rm U}(n,1)$ (if they have the same eigenvalues but
different eigenvalues of negative type). It will be useful for future reference to have matrix representatives in SU(2,1) of the different conjugacy classes of isometries when $n=2$.

\begin{enumerate}
 \item Any elliptic isometry of $\HCd$ is conjugate to one given in the ball model by the matrix $E_{(\alpha,\beta )}$ given in \eqref{classellip}.
 \item Any loxodromic isometry of $\HCd$ is conjugate to one given in the Siegel model by the matrix  $D_r R_\theta$ where $D_r$ and 
$R_\theta$ are as in \eqref{stabinf}.
\item Any parabolic isometry of $\HCd$ is conjugate to one given in the Siegel model by the matrix $P_{(z,t,\theta)}=T_{[z,t]} R_\theta$.
The isometry associated to $P_{(z,t,\theta)}$ is unipotent if and only if $\theta=0$ and 2-step unipotent if and only if $\theta=0$ and $z=0$.
\end{enumerate}

As in the classical case of the Poincar\'e disc, the isometry type of an isometry is closely related 
to the trace of a lift to SU(2,1). The characteristic polynomial of a matrix $A$ in SU(2,1) is given by:
\begin{equation}\label{charpoly}
\chi_A(X)=X^3-z\cdot x^2+\overline{z}\cdot x -1,\mbox{ where $z={\rm Tr}A$}.
\end{equation}
Computing its discriminant, we obtain:
\begin{equation}\label{goldpoly}
f(z)= Res( \chi_A,\chi_A')=\vert z \vert ^4 - 8Re(z^3)+18\vert z \vert ^2-27.
\end{equation}
This function provides  the following classification of holomorphic isometries via the trace
of their lifts to SU(2,1) (see ch. 6 of \cite{G}), which is analogous to the classical SL(2,$\C$) case. 
Denote by $C_3$ the set of cube roots of unity in $\C$.
\begin{thm}[Goldman]
Let $A \in {\rm SU}(2,1)$ and $g \in {\rm PU}(2,1)$ the corresponding
isometry of $H_\C^2$. Then:
\begin{itemize}
\item $g$ is regular elliptic $\iff$ $f({\rm Tr}(A))<0$.
\item $g$ is loxodromic $\iff$ $f({\rm Tr}(A))>0$.
\item $g$ is special elliptic or screw-parabolic $\iff$ $f({\rm Tr}(A))=0$
  and ${\rm Tr}(A) \notin 3C_3$.
\item $g$ is unipotent or the identity $\iff$ ${\rm Tr}(A) \in 3C_3$.  
\end{itemize}
\end{thm}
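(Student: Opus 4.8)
The plan is to recognize the real-valued function $f$ of \eqref{goldpoly} as the discriminant of the characteristic polynomial $\chi_A$, and then to read the isometry type directly off the configuration of the roots. Writing $\lambda_1,\lambda_2,\lambda_3$ for the eigenvalues of $A$, the coefficients of $\chi_A$ in \eqref{charpoly} are the elementary symmetric functions $e_1=z$, $e_2=\overline z$, $e_3=1$. First I would substitute these into the classical formula $e_1^2e_2^2-4e_2^3-4e_1^3e_3+18e_1e_2e_3-27e_3^2$ for the discriminant of a monic cubic; this reproduces $f(z)$ exactly, so that
\begin{equation}\label{fdisc}
f(z)=\prod_{i<j}(\lambda_i-\lambda_j)^2.
\end{equation}
In particular $f(z)=0$ if and only if $A$ has a repeated eigenvalue. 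By the Chen--Greenberg classification (Theorem~\ref{conjclasses}), the regular elliptic and loxodromic elements are exactly those with three distinct eigenvalues, while special elliptic, screw-parabolic, unipotent and identity elements all have a repeated eigenvalue. This already separates the locus $\{f\neq0\}$ (regular elliptic or loxodromic) from $\{f=0\}$.

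The heart of the argument, and the step I expect to be most delicate, is fixing the \emph{sign} of $f$ on $\{f\neq0\}$. Since $A\in{\rm U}(2,1)$ preserves a Hermitian form of signature $(2,1)$, its eigenvalues are permuted by the involution $\iota:\lambda\mapsto1/\overline\lambda$ (this is also visible directly on the coefficients of $\chi_A$). I would set $P=\prod_{i<j}(\lambda_i-\lambda_j)$, so that $f=P^2$ by \eqref{fdisc}, and then let $\sigma$ be the permutation of indices induced by $\iota$, so that $\overline{\lambda_j}=1/\lambda_{\sigma(j)}$. Using $\lambda_1\lambda_2\lambda_3=1$, a short computation should yield
\begin{equation}\label{Pbar}
\overline{P}=(-1)^{\binom{3}{2}}\,{\rm sgn}(\sigma)\,P=-\,{\rm sgn}(\sigma)\,P.
\end{equation}
For a regular elliptic element all three eigenvalues lie on the unit circle (Theorem~\ref{conjclasses}(a)), so $\iota$ fixes each of them, $\sigma$ is the identity, and \eqref{Pbar} gives $\overline P=-P$: thus $P$ is purely imaginary and $f=P^2<0$. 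For a loxodromic element exactly one eigenvalue has modulus one (Theorem~\ref{conjclasses}(b)) while the other two form a genuine $\iota$-pair, so $\sigma$ is a transposition, \eqref{Pbar} gives $\overline P=P$, $P$ is real, and $f=P^2>0$. As a cross-check one may compute $P$ directly: in the elliptic case $P=-8i\,\sin\frac{\theta_1-\theta_2}2\sin\frac{\theta_1-\theta_3}2\sin\frac{\theta_2-\theta_3}2$, and in the loxodromic case with eigenvalues $re^{i\phi},\tfrac1re^{i\phi},e^{-2i\phi}$ one gets $P=(r-\tfrac1r)\big(2\cos3\phi-(r+\tfrac1r)\big)$, which is real and nonzero since $r+\tfrac1r>2\geq2\cos3\phi$ when $r\neq1$. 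This settles the first two equivalences.

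It then remains to analyze $\{f=0\}$, where $A$ has a repeated eigenvalue, by distinguishing a double from a triple eigenvalue. If $A$ has a triple eigenvalue $\omega$ then $\omega^3=\det A=1$, so $\omega\in C_3$ and $z=\Tr A=3\omega\in 3C_3$; conversely, if $z=3\omega$ with $\omega\in C_3$ then $\omega^2=\overline\omega$ forces $\chi_A(X)=(X-\omega)^3$, a triple root. Hence $z\in3C_3$ if and only if $A$ has a triple eigenvalue, which by Theorem~\ref{conjclasses} happens exactly when the isometry is unipotent or the identity. The complementary part of $\{f=0\}$ then consists of elements with a double but not triple eigenvalue, that is $f(z)=0$ and $z\notin3C_3$, and by Theorem~\ref{conjclasses} these are precisely the special elliptic and screw-parabolic isometries, completing the classification. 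The main obstacle throughout is the sign determination of the second paragraph; the symmetry \eqref{Pbar} makes it transparent, the only care needed being the correct matching of eigenvalue configurations to isometry types via Theorem~\ref{conjclasses}.
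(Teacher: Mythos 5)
Your proposal is correct, but there is nothing in the paper to compare it against: the paper does not prove this statement, it simply quotes it as Goldman's theorem with a reference to chapter 6 of \cite{G}. Your argument is a sound, self-contained reconstruction of the classical proof. The identification $f(z)=\prod_{i<j}(\lambda_i-\lambda_j)^2$ is consistent with the explicit formula \eqref{goldpoly} (even though the paper labels $f$ a resultant, whose usual normalization for a monic cubic differs from the discriminant by a sign, the displayed polynomial is indeed the discriminant: e.g.\ $z=1$ gives $f=-16=(\lambda_1-\lambda_2)^2(\lambda_1-\lambda_3)^2(\lambda_2-\lambda_3)^2$ for eigenvalues $1,i,-i$). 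Your key relation $\overline{P}=-\,{\rm sgn}(\sigma)P$ checks out: with $\overline{\lambda_j}=1/\lambda_{\sigma(j)}$, the denominator $\prod_{i<j}\lambda_{\sigma(i)}\lambda_{\sigma(j)}=(\lambda_1\lambda_2\lambda_3)^2=1$ and the numerator contributes $(-1)^3{\rm sgn}(\sigma)$; both of your explicit cross-check formulas for $P$ are also correct. Two small points are worth making explicit to close the logic. First, that $\{f\neq 0\}$ is \emph{exactly} the regular elliptic and loxodromic locus uses the fact that a loxodromic element of ${\rm SU}(2,1)$ has three distinct eigenvalues: its two non-unit-modulus eigenvalues $\lambda$ and $1/\overline{\lambda}$ have reciprocal moduli, hence differ from each other and from the unit-modulus one. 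Second, in the analysis of $\{f=0\}$ you need that a special elliptic isometry cannot come from a lift with a triple eigenvalue: ellipticity gives semisimplicity (Theorem~\ref{conjclasses}(a)), and a diagonalizable matrix with a triple eigenvalue is a scalar matrix $\omega\,{\rm Id}$, which projects to the identity rather than to a special elliptic isometry. With those two remarks your partition argument (the four trace conditions partition $\C$, the four isometry classes partition ${\rm PU}(2,1)$, and each class maps into its condition) yields all four equivalences.
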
  
The null-locus of the polynomial $f$ can be seen in Figure~\ref{deltoid}.
\begin{figure}
\centering
\scalebox{0.3}{\includegraphics{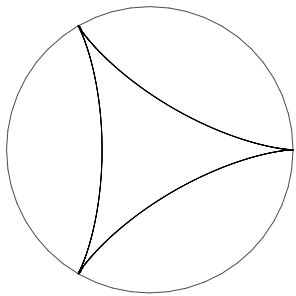}}
\caption{The null-locus of the polynomial $f$ inscribed in the circle
  of radius 3 centered at the origin}\label{deltoid} 
\end{figure}

We now focus on the special case of elements of SU(2,1) having real trace.
\begin{prop}\label{eigenvals} Let $A \in {\rm SU}(2,1)$ satisfy ${\rm
    Tr}A \in \R$. Then $A$ has an eigenvalue equal to 1. More
  precisely:
  \begin{itemize}
  \item If $A$ is loxodromic then $A$ has eigenvalues $\{1,r,1/r \}$
    for some $r \in \R \setminus [-1,1]$.
  \item If $A$ is elliptic then $A$ has eigenvalues
    $\{1,e^{i\theta},e^{-i\theta} \}$ for some $\theta \in [0,\pi]$.
  \item If $A$ is parabolic then $A$ has eigenvalues $\{1,1,1 \}$ or
    $\{1,-1,-1 \}$
  \end{itemize}
\end{prop}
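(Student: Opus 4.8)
The plan is to read the eigenvalue structure directly off the characteristic polynomial \eqref{charpoly}, exploiting the hypothesis $\Tr A \in \R$. Writing $z = \Tr A$, the assumption $z = \overline{z}$ turns \eqref{charpoly} into $\chi_A(X) = X^3 - z X^2 + z X - 1$, and the first thing I would check is that $\chi_A(1) = 1 - z + z - 1 = 0$. Thus $1$ is always an eigenvalue, which already gives the first assertion. Factoring out this root yields
\begin{equation*}
\chi_A(X) = (X-1)\left(X^2 - (z-1)X + 1\right),
\end{equation*}
so the remaining two eigenvalues $\mu,\nu$ are the roots of the quadratic factor and satisfy $\mu\nu = 1$ together with $\mu + \nu = z - 1 \in \R$.

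The next step is a dichotomy for this reciprocal pair of roots. Since the quadratic has real coefficients, its roots are either both real or complex conjugates of one another. If they are real, then $\mu\nu = 1$ forces them to be of the form $\{r, 1/r\}$ with $r \in \R^*$; if they are non-real conjugates, then $\mu\nu = |\mu|^2 = 1$ forces $|\mu| = 1$, i.e. $\mu = e^{i\theta}$ and $\nu = e^{-i\theta}$. It then remains to match each alternative with the isometry type of $A$ using the eigenvalue information recorded in Theorem~\ref{conjclasses}.

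For the elliptic case, all three eigenvalues have norm $1$, which excludes the real alternative unless $r = \pm 1$; in every sub-case the quadratic roots lie on the unit circle, giving $\{1, e^{i\theta}, e^{-i\theta}\}$ with $\theta \in [0,\pi]$ (the endpoints $\theta = 0, \pi$ absorbing the degenerate pairs $\{1,1\}$ and $\{-1,-1\}$). For the loxodromic case, exactly one eigenvalue has norm $1$; since $\mu$ and $\nu$ have equal modulus they cannot both sit on the unit circle, so they must be the real pair $\{r,1/r\}$ with $|r| \neq 1$, and choosing the root of modulus greater than $1$ gives $r \in \R \setminus [-1,1]$.

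The delicate case, and the one I expect to be the main obstacle, is the parabolic one. Here all eigenvalues still have norm $1$, but the Chen--Greenberg classification tells us that $A$ is \emph{not} semisimple. Were the quadratic to have distinct roots $e^{\pm i\theta}$ with $\theta \neq 0,\pi$, all three eigenvalues would be distinct and $A$ would be diagonalizable, contradicting non-semisimplicity; and two distinct real roots of modulus $1$ are impossible, since $\pm 1$ are their own reciprocals. Hence the quadratic must have a repeated root, forcing its discriminant $(z-1)^2 - 4$ to vanish, i.e. $z = 3$ or $z = -1$. These give the double roots $1$ and $-1$ respectively, so that the eigenvalues are $\{1,1,1\}$ or $\{1,-1,-1\}$, as claimed. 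The one point requiring care is to invoke non-semisimplicity (rather than the eigenvalue moduli alone) in order to eliminate the diagonalizable configurations $\{1,e^{i\theta},e^{-i\theta}\}$: these have the correct moduli but do not occur for genuinely parabolic $A$.
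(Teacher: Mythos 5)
Your proof is correct and follows essentially the same route as the paper's: both exploit the fact that $\mathrm{Tr}\,A\in\R$ makes the characteristic polynomial \eqref{charpoly} have real coefficients, combined with the Chen--Greenberg eigenvalue data of Theorem~\ref{conjclasses}. The paper merely compresses this into the observation that the spectrum is stable under complex conjugation and leaves to the reader the case analysis that you carry out explicitly via the factorization $\chi_A(X)=(X-1)\left(X^2-(z-1)X+1\right)$.
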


\Pf If ${\rm Tr}A$ is real, then $\chi_A$ has real coefficients, therefore 
the eigenvalue spectrum of $A$ is stable under complex conjugation. The result 
follows. \EPf

\begin{rmk}\label{remvalpone}
\begin{enumerate}

 \item[(a)] Any loxodromic or parabolic element of SU(2,1) with real trace is conjugate to its inverse; for elliptic isometries this is true under the additional assumption that the eigenvalue 1 has negative type. This follows from Proposition~\ref{conjclasses}. 
An element of a group which is conjugate to its inverse is sometimes called \emph{achiral} or \emph{reversible} in the context of isometry groups. In \cite{GonP}, Gongopadhyay and Parker have studied and classified these isometries in ${\rm PU}(n,1)$ for all $n \geqslant 1$.

 \item[(b)] For later use, let us note that matrix representatives of the conjugacy classes of elements of PU(2,1) having a fixed point in 
$\overline{\HCd}$ with associated positive eigenvalue are given by, in the notation of~\eqref{classellip} and \eqref{stabinf}:  
\begin{itemize}
 \item $E_{(\theta,-\theta)}$ with $\theta\in\R$ (in the ball model) for elliptic classes,
 \item $D_r$ with $r>1$ (in the Siegel model) for loxodromic classes, and
 \item $P_{(z,t,0)}$ with $z\in\C$ and $t\in\R$ (in the Siegel model) for parabolic classes.
\end{itemize}
\end{enumerate}
\end{rmk}

\subsection{Antiholomorphic isometries\label{antiholoisom}}
The following lemma is useful when computing with antiholomorphic isometries (see \cite{FalPau} in the elliptic case, where the corresponding matrix was called a \emph{Souriau matrix} for $f$). To simplify the statement, we consider a projective 
model (for example the ball or Siegel model) for which complex conjugation in affine coordinates $\sigma_0:\ (z_1,...,z_n) \mapsto (\overline{z_1},...,\overline{z_n})$ 
is an isometry (i.e. the Hermitian form has real coefficients).

\begin{lem}\label{antihololift}
 Let $f$ be an antiholomorphic isometry of $\HCn$. Then there exists  $M\in {\rm U}(n,1)$ such that for any $m \in \HCn$ 
with lift ${\bf m} \in \C^{n+1}$: 

\begin{equation}
 f(m)=M\cdot\overline{\bf m}.
\end{equation}
Any matrix $M\in {\rm U}(n,1)$ with this property will be called a {\it lift} of $f$.
\end{lem}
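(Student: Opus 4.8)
The plan is to reduce the antiholomorphic case to the holomorphic one by precomposing $f$ with the standard real reflection $\sigma_0$. Since $\sigma_0$ is an antiholomorphic isometry of $\HCn$ (by our choice of model with real-coefficient Hermitian form), the composition $g = f \circ \sigma_0$ is a \emph{holomorphic} isometry of $\HCn$, as the composition of two antiholomorphic maps is holomorphic. Therefore $g \in {\rm PU}(n,1)$, and $g$ admits a lift $M \in {\rm U}(n,1)$ in the usual sense, so that for any $m$ with lift ${\bf m}$ we have $g(m) = \pi(M {\bf m})$, which at the level of representative vectors we write as $g(m) = M \cdot {\bf m}$.

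Next I would apply $g$ to the point $\sigma_0(m)$. On one hand, $g(\sigma_0(m)) = f(\sigma_0(\sigma_0(m))) = f(m)$ because $\sigma_0$ is an involution. On the other hand, a lift of $\sigma_0(m)$ is precisely $\overline{\bf m}$, since $\sigma_0$ acts on affine coordinates by entrywise complex conjugation and hence on homogeneous lifts by conjugating every coordinate. Combining these two observations yields $f(m) = g(\sigma_0(m)) = M \cdot \overline{\bf m}$, which is exactly the desired formula.

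The one point requiring a little care — and the closest thing to an obstacle — is the passage between the projective action and the action on lifts, namely verifying that $\overline{\bf m}$ is a legitimate lift of $\sigma_0(m)$ and that conjugation commutes appropriately with projectivization. This is routine: scaling ${\bf m}$ by $\lambda \in \C^*$ scales $\overline{\bf m}$ by $\overline\lambda$, so the projective class of $\overline{\bf m}$ is well defined and equals $\pi(\sigma_0(m))$. I would also remark that $M$ is only determined up to a scalar of modulus one (as usual for lifts), and that $M \in {\rm U}(n,1)$ precisely because $g \in {\rm PU}(n,1)$ lifts to ${\rm U}(n,1)$; no separate verification that $M$ preserves the Hermitian form is needed beyond invoking that $g$ is a holomorphic isometry. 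This completes the construction of the lift $M$ and establishes the claimed identity $f(m) = M \cdot \overline{\bf m}$.
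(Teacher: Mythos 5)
Your proof is correct and follows exactly the paper's argument: the paper likewise observes that $f\circ\sigma_0$ is a holomorphic isometry, hence lies in ${\rm PU}(n,1)$, and that any lift $M\in{\rm U}(n,1)$ of it satisfies $f(m)=M\cdot\overline{\bf m}$. Your additional verification that $\overline{\bf m}$ is a legitimate lift of $\sigma_0(m)$ and is well defined up to scaling is a fair elaboration of a step the paper leaves implicit.
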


\Pf $f\circ\sigma_0$ is a holomorphic isometry, i.e. corresponds to an element of PU($n$,1). Any of its lifts $M \in {\rm U}(n,1)$  
satisfies the required property. \EPf

\begin{dfn}\label{defdecomp} 
\begin{itemize}
\item Given a real reflection $\sigma$ and an isometry $A \in {\rm PU}(n,1)$, we say that $\sigma$ \emph{decomposes} $A$ if 
$A=\sigma \tau$ for some real reflection $\tau$ (equivalently, $A=\tau ' \sigma$ where $\tau ' = \sigma\tau\sigma$ is also 
a real reflection).
\item Given two isometries $A,B\in {\rm PU}(n,1)$, we say that the pair $(A,B)$ is \emph{$\R$-decomposable} if there exists a real 
reflection which decomposes both $A$ and $B$. 
\end{itemize}
\end{dfn}
Note that when writing the two isometries $A$ and $B$ as products of real reflections, the order in which the reflections appear 
is not important. As an application of Lemma \ref{antihololift}, we obtain a necessary condition on a pair $(A,B)$ to be $\R$-decomposable, namely that the 
commutator $[A,B]$ must have real trace. Note that the trace of an element of PU($n$,1) is not well-defined in general. However, if 
$A$ and $B$ are in PU($n$,1), then the matrix $[\tilde A,\tilde B]$ does not depend on the choice of lifts $\tilde A$ and $\tilde B$ 
made for $A$ and $B$. This allows us to consider the condition of having real trace for a commutator.

\begin{lem}\label{lemRdecprodanti}
 If a pair $(A,B)\in {\rm PU}(n,1)\times {\rm PU}(n,1)$ is $\R$-decomposable, then the commutator $[A,B]$ is the square of an anti-holomorphic isometry.
\end{lem}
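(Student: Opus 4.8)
The plan is to unwind Definition~\ref{defdecomp} and to exploit the single structural feature that makes the statement work: a real reflection is an involution, and a decomposing reflection conjugates the isometry it decomposes to its inverse. Concretely, if $(A,B)$ is $\R$-decomposable then there is one real reflection $\sigma$ decomposing both factors, say $A=\sigma\tau_1$ and $B=\sigma\tau_2$ with $\tau_1,\tau_2$ real reflections. Since $\sigma$, $\tau_1$ and $\tau_2$ are involutions in ${\rm Isom}(\HCn)={\rm PU}(n,1)\ltimes\Z/2$, the first thing I would record is the conjugation identity
\begin{equation*}
\sigma A\sigma^{-1}=\sigma(\sigma\tau_1)\sigma=\tau_1\sigma=(\sigma\tau_1)^{-1}=A^{-1},\qquad\text{and likewise}\qquad \sigma B\sigma^{-1}=B^{-1}.
\end{equation*}
Thus $\sigma$ inverts both $A$ and $B$; this is the only computation the proof really requires.

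With this in hand I would exhibit the antiholomorphic square root explicitly. Set $g=AB\sigma$. As $AB$ is holomorphic and $\sigma$ antiholomorphic, $g$ is an antiholomorphic isometry of $\HCn$, and inserting $\sigma\sigma={\rm Id}$ to split the middle factor gives
\begin{equation*}
g^2=AB\sigma\,AB\sigma=AB\,(\sigma A\sigma)(\sigma B\sigma)=AB\cdot A^{-1}B^{-1}=[A,B].
\end{equation*}
Hence $[A,B]$ is the square of the antiholomorphic isometry $g$, which is exactly the assertion.

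There is essentially no hard step here; the only delicate points are bookkeeping. One must read the computation inside the full group ${\rm Isom}(\HCn)$, keeping track of which factors are holomorphic and which are antiholomorphic, and check that the resulting identity is consistent with the lift-independent meaning of $[A,B]$ recalled just before the statement. If a matrix-level formulation is wanted (this is what is ultimately used to conclude that $\Tr[A,B]\in\R$), I would invoke Lemma~\ref{antihololift} to choose a lift $M\in{\rm U}(n,1)$ of $g$, so that $g(m)=M\,\overline{\bf m}$; then $g^2(m)=M\,\overline{M\,\overline{\bf m}}=M\overline{M}\,{\bf m}$, exhibiting $M\overline{M}$ as a lift of $[A,B]$ of the Souriau form $M\overline M$, from which the reality of the trace follows. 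The entire argument hinges on the one observation that a decomposing reflection conjugates each of $A$ and $B$ to its inverse.
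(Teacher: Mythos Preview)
Your proof is correct and essentially the same as the paper's: the paper writes $A=\sigma_1\sigma_2$, $B=\sigma_2\sigma_3$ and observes directly that $[A,B]=(\sigma_1\sigma_2)(\sigma_2\sigma_3)(\sigma_2\sigma_1)(\sigma_3\sigma_2)=(\sigma_1\sigma_3\sigma_2)^2$, which in your notation (with common reflection $\sigma=\sigma_2$) is exactly your $g=AB\sigma$. Your intermediate conjugation identity $\sigma A\sigma=A^{-1}$ just repackages the same cancellation of $\sigma_2^2$ that the paper performs in one line.
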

\begin{proof}
If $A=\sigma_1\sigma_2$ and $B=\sigma_2\sigma_3$ with each $\sigma_i$ a real reflection, then         $[A,B]= (\sigma_1\sigma_2)(\sigma_2\sigma_3)(\sigma_2\sigma_1)(\sigma_3\sigma_2)=   (\sigma_1\sigma_3\sigma_2)^2$, where $\sigma_1\sigma_3\sigma_2$ is an antiholomorphic isometry. \end{proof}

\begin{cor}\label{cororealtrace}
 If a pair $(A,B)\in {\rm PU}(n,1) \times {\rm PU}(n,1)$ is $\R$-decomposable, then: ${\rm Tr}([A,B]) \in \R$.
\end{cor}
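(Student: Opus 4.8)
The plan is to deduce Corollary~\ref{cororealtrace} directly from Lemma~\ref{lemRdecprodanti}, so the only real work is to connect the trace of a squared antiholomorphic isometry to a reality statement. First I would invoke Lemma~\ref{antihololift} to fix a lift $M \in {\rm U}(n,1)$ of the antiholomorphic isometry $g$ obtained in Lemma~\ref{lemRdecprodanti}, so that $g(m)=M\cdot\overline{\bf m}$ for every lift ${\bf m}$. The key computation is then to see how $g^2$ acts: applying $g$ twice gives $g^2(m)=M\cdot\overline{M\cdot\overline{\bf m}}=M\overline{M}\cdot{\bf m}$, since conjugating a lift twice returns the original vector. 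Hence the holomorphic isometry $[A,B]=g^2$ admits $M\overline{M}$ as a lift in ${\rm U}(n,1)$, and the corollary reduces to showing that $\Tr(M\overline{M})\in\R$.

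Next I would establish this reality of the trace. The clean way is to observe that $M\overline{M}$ is conjugate to its own complex conjugate: indeed $\overline{M\overline{M}}=\overline{M}M=M^{-1}(M\overline{M})M$, so $M\overline{M}$ and $\overline{M\overline{M}}$ are conjugate matrices and therefore have equal traces. But $\Tr\bigl(\overline{M\overline{M}}\bigr)=\overline{\Tr(M\overline{M})}$, so $\Tr(M\overline{M})=\overline{\Tr(M\overline{M})}$, which is exactly the assertion that it is real. One subtlety to address is that $[A,B]$ only has a well-defined lift up to scalar, and different lifts of $g$ differ by a unimodular scalar $\mu$, replacing $M\overline{M}$ by $\mu\overline{\mu}M\overline{M}=|\mu|^2 M\overline{M}=M\overline{M}$; so in fact $M\overline{M}$ is the canonical lift already singled out in the discussion preceding the corollary, and the reality of its trace is genuinely an invariant of the pair $(A,B)$.

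The main obstacle, such as it is, is purely bookkeeping rather than conceptual: one must be careful that the relation $g(m)=M\cdot\overline{\bf m}$ from Lemma~\ref{antihololift} composes correctly, in particular that the second application of $g$ conjugates the already-conjugated entries of $M\overline{\bf m}$, yielding $\overline{M}$ times ${\bf m}$ and not $\overline{M}$ times $\overline{\bf m}$. Once that is handled, the trace argument via $\overline{M}M=M^{-1}(M\overline{M})M$ finishes the proof immediately, and no case analysis on the isometry type of $[A,B]$ is needed. I would present the argument in this order: recall the lift, compute $g^2$, identify $M\overline{M}$ as a lift of $[A,B]$, then apply the conjugation identity to conclude $\Tr([A,B])\in\R$.
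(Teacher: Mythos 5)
Your proposal is correct and follows essentially the same route as the paper: both use Lemma~\ref{lemRdecprodanti} to write $[A,B]$ as the square of an antiholomorphic isometry, lift that isometry to $M\in{\rm U}(n,1)$ via Lemma~\ref{antihololift} so that the commutator lifts to $M\overline{M}$, and then conclude reality of the trace from the relation between $\Tr(M\overline{M})$ and $\Tr(\overline{M}M)$. Your conjugation identity $\overline{M}M=M^{-1}(M\overline{M})M$ is just a slightly more explicit way of phrasing the paper's appeal to the cyclic invariance of the trace, and your remark on the scalar ambiguity of lifts matches the paper's observation preceding the corollary.
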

\begin{proof}
 First note that if $f_1$ and $f_2$ are antiholomorphic isometries with lifts $M_1$ and $M_2$, 
then $f_1\circ f_2$ is holomorphic with lift $M_1\overline{M_2} \in {\rm U}(n,1)$. Using this fact together with Lemma 
\ref{lemRdecprodanti}, we see that if $(A,B)$ is $\R$-decomposable, then the commutator can be lifted to SU($n$,1) as
$M\overline M$, with $M=M_1\overline{M_3}M_2$, where $M_k$ is a lift of $\sigma_k$. As a consequence, we see that 
$$\overline{\Tr(M\overline{M})}=\Tr(\overline{M}M)=\Tr(M\overline{M})\in\R$$ \end{proof}
Note that this necessary condition holds in any dimension $n$. However, to obtain a sufficient condition, we will need
Lemma 2.4 below, which is false in dimensions $n>2$.

\begin{lem}\label{refl1}
  Any antiholomorphic isometry of ${\rm H}_\C^1$ having a fixed
  point in ${\rm H}_\C^1$ is a real reflection.
\end{lem}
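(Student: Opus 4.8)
The plan is to normalize the fixed point and then read off the map explicitly in the disk model. Since ${\rm PU}(1,1)$ acts transitively on ${\rm H}_\C^1$, and since conjugating an antiholomorphic isometry by a holomorphic one preserves both antiholomorphicity and the property of being a real reflection (conjugation by a holomorphic isometry sends real $1$-planes to real $1$-planes, hence real reflections to real reflections), I may assume without loss of generality that $f$ fixes the center $0$ of the unit disk $\Delta$ in the ball model associated to the Hermitian form ${\rm diag}(1,-1)$.

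Next I would compose with $\sigma_0$. By (the proof of) Lemma~\ref{antihololift}, the map $g := f\circ\sigma_0$ is a holomorphic isometry, and since $\sigma_0(0)=0$ and $f(0)=0$ we have $g(0)=0$. Holomorphic isometries of $\Delta$ fixing its center are exactly the rotations $z\mapsto e^{i\theta}z$ (in matrix terms, the stabilizer of $[0:1]$ in ${\rm U}(1,1)$ consists of the diagonal matrices ${\rm diag}(e^{i\alpha},e^{i\beta})$, acting on $z=z_1/z_2$ by $z\mapsto e^{i(\alpha-\beta)}z$). Writing $f = g\circ\sigma_0$ then gives the explicit form $f(z) = e^{i\theta}\overline{z}$ for some $\theta\in\R$.

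Finally I would verify directly that this map is a real reflection. A short computation gives $f^2(z)=e^{i\theta}\overline{e^{i\theta}\overline z}=z$, so $f$ is an antiholomorphic involution. Solving $f(z)=z$ with $z=re^{i\phi}$ yields $\phi\equiv\theta/2 \pmod{\pi}$, so the fixed-point set of $f$ is the diameter $\{re^{i\theta/2}: r\in(-1,1)\}$, a geodesic through $0$, i.e. a real $1$-plane. An antiholomorphic involution whose fixed-point set is a real $1$-plane is by definition a real reflection, which completes the argument. The only points requiring care — and the closest thing to an obstacle — are the identification of the full stabilizer of the center as the rotation group (so that no antiholomorphic possibility other than $z\mapsto e^{i\theta}\overline z$ survives) and the verification that the resulting fixed locus is genuinely a totally real geodesic rather than an isolated point; both features are special to dimension $1$, consistent with the failure of the statement in dimensions $n>2$ noted after Corollary~\ref{cororealtrace}.
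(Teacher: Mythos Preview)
Your proof is correct. It differs from the paper's argument, which is coordinate-free: the paper fixes the given fixed point $p$, observes that for any $q\neq p$ the antiholomorphicity of $f$ makes the oriented angles $(f^{-1}(q),p,q)$ and $(q,p,f(q))$ opposite while the isometry property makes $d(p,f^{-1}(q))=d(p,q)=d(p,f(q))$, so $f^{-1}(q)=f(q)$ for all $q$, hence $f^2=\mathrm{id}$; since $f$ is an antiholomorphic involution of the disk with a fixed point, its fixed set is a geodesic. Your approach instead normalizes the fixed point to the origin and reads off $f(z)=e^{i\theta}\overline z$ explicitly. The paper's route is more synthetic and isolates precisely the two features in play (angle reversal and distance preservation); your route is more computational but has the advantage of exhibiting the map and its fixed geodesic concretely. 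One small remark: your closing comment about failure in higher dimensions actually pertains to Lemma~\ref{refl2}, not to the present lemma, which is only stated for $n=1$.
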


\Pf Let $f$ be an antiholomorphic isometry of ${\rm H}_\C^1$ with a fixed point $p$. Then, for any point $q\neq p$, 
the angles $(f^{-1}(q),p,q)$ and $(q,p,f(q))$ are opposite (because $f$ is antiholomorphic), therefore 
$f^{-1}(q)=f(q)$ (because $f$ is an isometry), and $f$ is an involution and fixes pointwise a geodesic. \EPf

\begin{lem}\label{refl2} If $g$ is an antiholomorphic isometry of ${\rm
    H}_\C^2$ which exchanges two points of $\overline{{\rm H}_\C^2}$,
  then $g$ is an $\R$-reflection.
\end{lem}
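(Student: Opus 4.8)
The plan is to reduce the statement to the one-dimensional case of Lemma~\ref{refl1} by restricting $g$ to the complex line spanned by the two exchanged points, and then to finish with a linear algebra computation at a fixed point. Write $g(p)=q$ and $g(q)=p$ with $p\neq q$ in $\overline{\HCd}$ (two distinct points; if $p=q$ the conclusion can fail, as the order-$4$ antiholomorphic maps $z\mapsto U\overline z$ with $U\overline U\neq I$ show). Since $g$ is an isometry it preserves $\HCd$ and $\partial\HCd$, so $p$ and $q$ are either both interior or both on the boundary. The two points span a unique complex line $\Sigma$ (the projective line through them meets $\pi(V^-)$), and since $g$ carries complex lines to complex lines and sends the line through $p,q$ to the line through $g(p),g(q)=q,p$, it preserves $\Sigma$. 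Thus $g|_\Sigma$ is an antiholomorphic isometry of $\Sigma\cong{\rm H}_\C^1$ interchanging $p$ and $q$.

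First I would show that $g|_\Sigma$ is a real reflection of $\Sigma$. If $p,q$ are interior, the midpoint $m$ of the geodesic segment $[p,q]\subset\Sigma$ is fixed by $g|_\Sigma$, so Lemma~\ref{refl1} applies directly. If $p,q$ lie on $\partial\Sigma$, I would invoke the classification of orientation-reversing isometries of the Poincar\'e disk $\Sigma$: such an isometry is either a reflection or a glide reflection, and a glide reflection fixes the two endpoints of its axis individually. If it interchanged the distinct boundary points $p,q$, its square would be a nontrivial translation fixing $p$, forcing $p$ to be an endpoint of the axis, which the glide reflection fixes rather than moves --- a contradiction. Hence in both cases $g|_\Sigma$ is a real reflection of $\Sigma$; in particular it is an involution and fixes pointwise a geodesic $\gamma\subset\Sigma$.

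Now $g$ fixes the point $m\in\gamma$, so in the ball model with $m$ at the origin Lemma~\ref{antihololift} lets me write $g(z)=U\overline z$ for some $U\in{\rm U}(2)$, with $\Sigma$ a complex line through the origin. The square $g^2$ is the holomorphic isometry $z\mapsto U\overline U\,z$, and since $g|_\Sigma$ is an involution, $g^2$ fixes $\Sigma$ pointwise. Therefore $U\overline U\in{\rm U}(2)$ fixes a complex line pointwise, so it has eigenvalue $1$ on a line; since $\det(U\overline U)=|\det U|^2=1$ the other eigenvalue is also $1$, and being unitary (hence diagonalizable) it equals the identity. Thus $U\overline U=I$, i.e. $U$ is symmetric and $g$ is an antiholomorphic involution.

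Finally I would identify any such involution with a real reflection. By the Takagi (Autonne) factorization a symmetric unitary matrix may be written $U=VV^T$ with $V\in{\rm U}(2)$; conjugating by the holomorphic isometry $h:z\mapsto Vz$ gives $h^{-1}gh(z)=V^{-1}U\overline V\,\overline z=V^T\overline V\,\overline z=\overline z=\sigma_0(z)$, where $\overline V=(V^T)^{-1}$ because $V$ is unitary. Hence $g$ is conjugate in ${\rm PU}(2,1)$ to the standard map $\sigma_0$, so $g$ is an $\R$-reflection. The main obstacle is the boundary case of the second step: the hypothesis that $g$ genuinely \emph{interchanges} two distinct points --- not merely fixes one --- is exactly what forces $g|_\Sigma$ to be an involution and thereby $U\overline U=I$, and extracting this requires the reduction to $\Sigma$ together with the one-dimensional classification.
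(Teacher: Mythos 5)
Your proof is correct, but its second half takes a genuinely different route from the paper's. Both arguments begin identically: restrict $g$ to the complex line $\Sigma$ spanned by the two exchanged points and show this restriction is a real reflection of $\Sigma$. (The paper gets this uniformly by noting that $g$ has a fixed point $m$ on the geodesic $(pq)$ and invoking Lemma~\ref{refl1}; you handle the boundary case instead via the reflection/glide-reflection dichotomy in the Poincar\'e disk, which is an equivalent and equally valid variant.) From there the paper stays synthetic: it also considers the complex line $C'$ orthogonal to $\Sigma$ at $m$, applies Lemma~\ref{refl1} a second time to $g|_{C'}$, and concludes that the two fixed geodesics $\gamma\subset\Sigma$ and $\gamma'\subset C'$ span a Lagrangian plane fixed pointwise by $g$. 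You instead pass to coordinates: the lift $U\in{\rm U}(2)$ of Lemma~\ref{antihololift} at the interior fixed point $m$, the observation that $U\overline{U}$ has eigenvalue $1$ along $\Sigma$ while $\det(U\overline{U})=|\det U|^2=1$ and unitarity force $U\overline{U}={\rm Id}$, and then Takagi factorization $U=VV^T$ to conjugate $g$ to $\sigma_0$. The paper's argument is shorter and exhibits the fixed real plane directly; yours has the merit of making the role of $n=2$ completely transparent, since the determinant step is exactly what fails in higher dimensions, where the remaining eigenvalues of $U\overline{U}$ need not equal $1$ --- this is precisely the paper's counterexample $M\overline{M}\neq{\rm Id}$ given right after the lemma. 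Your parenthetical remark that a single fixed point does not suffice (order-$4$ maps $z\mapsto U\overline{z}$ with $U\overline{U}\neq{\rm Id}$) is also correct and explains why the hypothesis of exchanging two \emph{distinct} points is essential.
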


\Pf Let $p$ and $q$ be the two points exchanged by $g$. Then 
$g$ has a fixed point $m$ on the geodesic $(pq)$ (which is the
midpoint of the geodesic segment $[pq]$ when $p$ and $q$ are in
$\HCd$). Then $g$ stabilizes the complex line $C$ spanned by $p$ and $q$, as well as the complex 
line $C'$ orthogonal to $C$ at $m$. By Lemma~\ref{refl1}, the restrictions of $g$ to $C$ and $C'$ are real reflections, 
fixing geodesics $\gamma$ and $\gamma'$. Now $\gamma$ and $\gamma'$ are geodesics 
contained in orthogonal complex lines, therefore they span a Lagrangian plane, which is 
fixed pointwise by $g$. \EPf

Lemma \ref{refl2} is false as soon as $n\geq 3$. Indeed, consider an element $M\in$ SU(n,1) given in the Siegel model by
\begin{equation}
 M=\begin{bmatrix}
    (0) &  & z\\
     & U & \\
    1/\bar z &  & (0)
   \end{bmatrix},
\end{equation}
where $z\in\C^\star$ and $U\in U(n-1)$ is a matrix such that $U\overline U\neq Id$, that is a non-symmetric matrix in $U(n-1)$ 
(such matrices only exist when $n>2$). 
Then
$$
M\overline{M}=\begin{bmatrix}
    1 &  & (0)\\
    & U\overline U & \\
    (0) & \dots & 1
   \end{bmatrix}\neq Id
$$
Therefore the antiholomorphic isometry assciated with $M$ is not a real reflection, though it exchanges the two points 
corresponding to the first and last vectors in the canonical basis of $\C^{n+1}$, which are both on the boundary of $\HCn$.


The following proposition describes which elements of PU(2,1) can arise as squares of antiholomorphic isometries. 
\begin{prop}
 Let $A\in {\rm PU}(2,1)$ be an isometry admitting a lift $\tilde A$ to ${\rm SU}(2,1)$ such that for any fixed point $A$ in $\overline \HCd$, 
the corresponding eigenvalue of $\tilde A$ is positive. Then $A$ is the square of an antiholomorphic isometry unless $A$ is 2-step 
unipotent parabolic. 
\end{prop}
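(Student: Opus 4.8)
The plan is to reduce to conjugacy normal forms and then treat each case by hand, the only genuine subtlety being the excluded 2-step unipotent class.

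First I would record the elementary observation that being the square of an antiholomorphic isometry is invariant under holomorphic conjugation: if $g$ is antiholomorphic and $h\in{\rm PU}(2,1)$, then $hgh^{-1}$ is again antiholomorphic and $(hgh^{-1})^2=hg^2h^{-1}$. Using Lemma~\ref{antihololift}, an antiholomorphic $g$ has a lift $M\in{\rm U}(2,1)$ with $g(m)=M\overline{\bf m}$, so that $g^2$ has lift $M\overline M$ (this is the computation behind Corollary~\ref{cororealtrace}). The problem therefore becomes: for which normal-form representatives $\tilde A$ does there exist $M\in{\rm U}(2,1)$ with $M\overline M=\tilde A$ up to scalar? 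Now the hypothesis that \emph{every} eigenvalue of $\tilde A$ associated to a fixed point of $A$ in $\overline{\HCd}$ is positive is exactly the condition isolating the three conjugacy classes of Remark~\ref{remvalpone}(b): up to conjugation $A$ is $E_{(\theta,-\theta)}$ (elliptic, in the ball model), $D_r$ with $r>1$ (loxodromic, in the Siegel model), or $P_{(z,t,0)}$ (unipotent parabolic, which is 2-step if $z=0$ and 3-step otherwise). Thus it suffices to settle each representative.

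For the elliptic, loxodromic and 3-step unipotent representatives I would exhibit an explicit square root. For the elliptic case, the ``swap'' matrix $M=\left(\begin{smallmatrix}0&e^{i\theta}&0\\1&0&0\\0&0&1\end{smallmatrix}\right)$ lies in ${\rm U}(2,1)$ for the ball-model form and satisfies $M\overline M=\mathrm{diag}(e^{i\theta},e^{-i\theta},1)$, which represents $E_{(\theta,-\theta)}$ in ${\rm PU}(2,1)$; concretely the square root is $g(z_1,z_2)=(e^{i\theta}\overline{z_2},\overline{z_1})$. For the loxodromic case, the real diagonal matrix $M=\mathrm{diag}(\sqrt r,1,1/\sqrt r)$ lies in ${\rm U}(2,1)$ for the Siegel form and gives $M\overline M=M^2=D_r$. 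For the 3-step case, after normalizing $A$ to $P_{(1,0,0)}=T_{[1,0]}$, the real Heisenberg translation $M=T_{[1/2,0]}$ of \eqref{stabinf}–\eqref{PztU21} satisfies $M\overline M=M^2=T_{[1,0]}$ by the Heisenberg product law \eqref{Heisprod}, since $[1/2,0]\cdot[1/2,0]=[1,0]$. Each of these is a routine verification.

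It remains to show that a nontrivial \emph{2-step} unipotent $A$ is \emph{not} such a square, which is the heart of the argument. Suppose $g^2=A$ with $g$ antiholomorphic. Since $A$ is parabolic its unique fixed point in $\overline{\HCd}$ is $q_\infty$; because $g(\overline{\HCd})=\overline{\HCd}$ and $A(g\,q_\infty)=g\,A\,q_\infty=g\,q_\infty$, the point $g\,q_\infty$ is an $A$-fixed point of $\overline{\HCd}$, hence $g\,q_\infty=q_\infty$. By Proposition~\ref{prop-invariant-parab}(1) the isometry $A$ preserves a \emph{unique} complex line $L$, and $g\,L$ is again an $A$-invariant complex line, so $g\,L=L$. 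The restriction $g|_L$ is then an antiholomorphic (orientation-reversing) isometry of the Poincar\'e disk $L\cong\HCd^1$, with $(g|_L)^2=A|_L$. Here $A|_L$ is a \emph{nontrivial parabolic} of $L$ (it fixes $q_\infty\in\partial L$ and nothing else of $\overline{L}$, and is not the identity). But by Lemma~\ref{refl1}, if $g|_L$ had a fixed point in $L$ it would be a real reflection, so $(g|_L)^2=\mathrm{id}$; and if $g|_L$ had no fixed point in $L$ it would be a glide reflection, whose square is loxodromic. In neither case can $(g|_L)^2$ be a nontrivial parabolic, a contradiction. This completes the exceptional case, and I would note the consistency check that a 3-step unipotent has no invariant complex line (only an invariant fan, by Proposition~\ref{prop-invariant-parab}(2)), so this obstruction does not apply there — in agreement with the explicit square root found above.

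The main obstacle is precisely this last case: the reductions and explicit constructions are mechanical, whereas the non-existence for 2-step unipotents requires the passage to the invariant complex line together with the dichotomy that squares of orientation-reversing isometries of the hyperbolic plane are either trivial or loxodromic, never parabolic.
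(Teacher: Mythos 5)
Your proof is correct, and its skeleton --- reduction to the conjugacy representatives of Remark~\ref{remvalpone}(b), then explicit square roots --- is the same as the paper's; your elliptic and loxodromic matrices are, up to a harmless variant, exactly the ones used there. Where you genuinely diverge is the parabolic case. The paper handles it by a single computation: if $\phi^2=A$ is unipotent, then $\phi$ fixes the fixed point of $A$ (Brouwer's theorem plus uniqueness, where you instead use that $\phi$ commutes with $\phi^2$), so a lift $M$ of $\phi$ is upper triangular of the form $P_{(z,t,\theta)}$, and the product $M\overline{M}=P_{(z,t,\theta)}P_{(\bar z,-t,-\theta)}$, displayed in \eqref{antisquareparab}, has its $(1,2)$ and $(2,3)$ entries vanishing simultaneously; hence it is either 3-step unipotent or the identity. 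That one computation both rules out the 2-step class and, since there is a single ${\rm PU}(2,1)$-conjugacy class of 3-step unipotents, shows that every 3-step unipotent is such a square. You split these two tasks: an explicit real square root $T_{[1/2,0]}$ of $T_{[1,0]}$ (which is what the paper's formula specializes to at $z=1/2$, $t=\theta=0$), and, for the 2-step exclusion, a geometric argument through the unique invariant complex line $L$ of Proposition~\ref{prop-invariant-parab}: $g$ must preserve $L$, and the square of an orientation-reversing isometry of the Poincar\'e disk is either the identity (reflection, via Lemma~\ref{refl1}) or hyperbolic (glide reflection), never parabolic. This is valid and more conceptual than the paper's matrix computation; its only cost is the appeal to the classification of orientation-reversing isometries of $\HRd$ (standard, e.g.\ in Beardon \cite{Be}). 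If you want to avoid even that, note that $g|_L$, having no fixed point in $L$, must have exactly two fixed points on the circle $\partial L$ (an orientation-reversing homeomorphism of a circle always does), and both would then be fixed by $(g|_L)^2=A|_L$, contradicting that $A$ fixes a single point of $\overline{\HCd}$.
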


\begin{proof}
Being the square of an antiholomorphic isometry is preserved by conjugation, therefore we only need to prove the result for representatives 
of each conjugacy class. We first provide matrices $M$ such that $M\overline M$ is equal to the lifts listed in Remark \ref{remvalpone}(b) in the elliptic and loxodromic cases. 
\begin{enumerate}
 \item If $A$ is elliptic the following $M$ satisfies the above condition: 
 $$M=\begin{bmatrix}0 & e^{i\theta/2} & 0 \\ e^{-i\theta/2} & 0 & 0\\ 0 & 0 & 1\end{bmatrix}$$
 \item If $A$ is loxodromic then we can take:
  $$M=\begin{bmatrix}\sqrt{r} & 0 & 0 \\ 0 & 1 & 0\\ 0 & 0 & 1/\sqrt{r}\end{bmatrix}.$$
 \end{enumerate}
We now examine the case where $A$ is parabolic, thus unipotent in view of the assumptions. Assume that $A$ is unipotent and that $A=\phi^2$ with $\phi$ an antiholomorphic isometry. First, $\phi$ has at least one fixed 
point in the closure of $\HCd$ and $\phi^2$ has exactly one fixed point there. This implies that $\phi$ has only one fixed point, 
which is the same as that of $A$. Conjugating by an element of PU(2,1), we can assume that this fixed point is $q_\infty$. Consider 
a lift $M$ of $\phi$ to SU(2,1) in the sense of Lemma \ref{antihololift}. The fact that $M\overline M$ is a unipotent map fixing 
$q_\infty$,  which corresponds to the vector $\begin{bmatrix}1 & 0 & 0 \end{bmatrix}^T$ implies that $M$ is an upper triangular matrix in SU(2,1)
 with unit modulus diagonal entries. Therefore $M$ is of the form $P_{(z,t,\theta)}$. Computing 
$M\overline{M}=P_{(z,t,\theta)}P_{(\bar z,-t,-\theta)}$, we obtain that $A$ has a lift to SU(2,1) of the form:
\begin{equation}\label{antisquareparab}
\begin{bmatrix} 
1 & -\left(\overline{z}+ze^{-i\theta}\right) & -\overline{z}\left(z+\overline z e^{i\theta}\right)\\ 
0 & 1 & \left(z+\overline{z}e^{i\theta}\right)\\
0 & 0 & 1
\end{bmatrix}
\end{equation}
This matrix can only be 3-step unipotent or the identity. This proves the result as there is only one 
PU(2,1)-conjugacy class of 3-step unipotents.
\end{proof}

\subsection{Decomposing isometries\label{section-decompose-one}}
The following proposition summarizes results characterizing which real reflections decompose a 
given holomorphic isometry. Parts (e1-e3) are Proposition~2.4 of \cite{FalPau}, part (l) is  Proposition~4(2) 
of \cite{W2} (and follows from Proposition~3.1 of \cite{FZ}), and part (p) is new:
\begin{prop}\label{decompCrefllox}  Let $A \in {\rm PU}(2,1)$ and $\sigma$ a real reflection with fixed $\R$-plane L.
\begin{itemize}
\item[(e1)] If $A$ is a complex reflection with fixed complex line $C$, then: $\sigma$ decomposes $A$ if and only if $L \cap C$ is a geodesic. 
\item[(e2)] If $A$ is a complex reflection in a point $p_A$, then: $\sigma$ decomposes $A$ if and only if $p_A \in L$.
\item[(e3)] If $A$ is regular elliptic with fixed point $p_A$ and stable complex lines $C_1,C_2$, then: $\sigma$ decomposes $A$ 
if and only if $p_A \in L$ and $L \cap C_i$ is a geodesic for $i=1,2$.
\item[(l)] If $A$ is loxodromic then: $\sigma$ decomposes $A$ if and only if $\sigma$ exchanges the 2 fixed points of $A$.
\item[(p)] If $A$ is parabolic the situation is as follows.
\begin{enumerate}
 \item  If $A$ is screw parabolic or 2-step unipotent with fixed point $p$ and stable complex line $L$ then $\sigma$ decomposes $A$ 
if and only if $\sigma(p)=p$ and $\sigma(L)=L$.
\item If $A$ is 3-step unipotent with fixed point $p$ and invariant fan $F$ (see Proposition \ref{prop-invariant-parab}), then 
$\sigma$ decomposes $A$ if and only if $\sigma(p)=p$, $\sigma(F)=F$, and the restriction of $\sigma$ to $\partial F$ is a half-turn.
\end{enumerate}
\end{itemize}
\end{prop}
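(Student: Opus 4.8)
My plan is to collapse the whole statement into a single conjugacy condition and then read off the geometry case by case; since parts (e1)--(e3) and (l) are quoted from the literature, the real work is part (p). The starting point is that $\sigma^2=\mathrm{id}$, so $A=\sigma\tau$ forces $\tau=\sigma A$; hence $\sigma$ decomposes $A$ if and only if $\sigma A$ is a real reflection. Now $\sigma A$ is antiholomorphic, and in dimension $2$ every nontrivial antiholomorphic involution is an $\R$-reflection: such a map moves some point of $\overline{\HCd}$ and is therefore a real reflection by Lemma~\ref{refl2} (this is exactly where $n=2$ enters). Consequently $\sigma A$ is a real reflection if and only if $(\sigma A)^2=\mathrm{id}$, which rearranges to $\sigma A\sigma=A^{-1}$. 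So I would use throughout the criterion that $\sigma$ decomposes $A$ if and only if $\sigma A\sigma=A^{-1}$, i.e. $\sigma$ conjugates $A$ to its inverse. This already reproves (l), since a real reflection conjugates a loxodromic $A$ to $A^{-1}$ precisely when it interchanges the two boundary fixed points of $A$ (conjugation must send the fixed pair of $A$ to that of $A^{-1}$ while reversing the attracting/repelling roles), and it reduces (p) to tracking invariant objects under conjugation.

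For part (p)(1), where $A$ is screw-parabolic or $2$-step unipotent with fixed point $p$ and unique invariant complex line $L$ (Proposition~\ref{prop-invariant-parab}(1)), the forward implication is soft: if $\sigma A\sigma=A^{-1}$, then $A^{-1}$ has the same type, fixed point $p$ and invariant line $L$, while conjugation carries the fixed point and invariant line of $A$ to those of $\sigma A\sigma=A^{-1}$, so uniqueness forces $\sigma(p)=p$ and $\sigma(L)=L$. For the converse I would normalize in the Siegel model so that $p=q_\infty$, $L$ is the line polar to $\begin{bmatrix}0&1&0\end{bmatrix}^T$, and $A=P_{(0,1,\theta)}$. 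The conditions $\sigma(q_\infty)=q_\infty$ and $\sigma(L)=L$ pin down the shape of a lift $M$ of $\sigma$ and show that $\sigma|_L$ is an antiholomorphic involution of the Poincar\'e disk $L$ fixing the boundary point $q_\infty$, hence the geodesic reflection of $L$ across a geodesic ending at $q_\infty$. The classical hyperbolic-plane computation (e.g. $z\mapsto 2c-\overline z$ conjugates $z\mapsto z+1$ to its inverse) then gives $\sigma A\sigma=A^{-1}$ on $L$; transverse to $L$ the map $A$ acts by the rotation $e^{i\theta}$, which the antiholomorphic $\sigma$ reverses, matching the $\theta\mapsto-\theta$ of $A\mapsto A^{-1}$. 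Together these give $\sigma A\sigma=A^{-1}$.

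Part (p)(2), the $3$-step unipotent case, is the substantial one, because there is no invariant complex line; the correct invariant object is the invariant fan $F$ (Proposition~\ref{prop-invariant-parab}(2)), and the structural tool is Corollary~\ref{charach}, that a $3$-step unipotent is determined by its invariant fan together with its restriction to $F$. For the forward implication, $\sigma A\sigma=A^{-1}$ and uniqueness of the invariant fan give $\sigma(p)=p$ and $\sigma(F)=F$; since $\sigma$ then preserves the vertical plane $\partial F$ and acts on the boundary as a Euclidean half-turn about an infinite $\R$-circle $\ell$ (Lemma~\ref{lem-Rplane-infinite}), either $\ell\subset\partial F$ or $\ell$ meets $\partial F$ orthogonally in a single point. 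Normalizing $A=T_{[1,0]}$, so that $A$ acts on $\mathfrak N$ by $[z,t]\mapsto[z+1,t-2\Im z]$ and $\partial F$ is the vertical plane over the real axis, a direct computation shows the first (reflection) case yields $\sigma A\sigma=A$ on $\partial F$, contradicting $\sigma A\sigma=A^{-1}$; hence we are in the second case, i.e. $\sigma|_{\partial F}$ is a half-turn. Conversely, given the three conditions, the half-turn case yields $\sigma A\sigma=A^{-1}$ on $\partial F$ by the same computation, and since $\sigma A\sigma$ and $A^{-1}$ moreover share the invariant fan $F$, Corollary~\ref{charach}(2) upgrades this to the global equality $\sigma A\sigma=A^{-1}$, using the explicit parametrization of the real foliation from Lemma~\ref{lem-explicit-boundary-foliation} and Proposition~\ref{prop-invariant-fan}.

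I expect the genuine difficulty to lie entirely in part (p)(2), and specifically in the half-turn condition: translating \emph{$\sigma|_{\partial F}$ is a half-turn} into the orthogonality of the $\R$-circle $\ell$ with the vertical plane $\partial F$, and then checking that this orthogonality is exactly what makes the Euclidean half-turn send the Heisenberg translation $T_{[1,0]}$ to $T_{[-1,0]}$ leaf by leaf of the real foliation. This is the one place where the contact geometry of $\partial\HCd$ and the fine structure of the invariant fan do real work, whereas the loxodromic, elliptic, screw-parabolic and $2$-step cases are governed by the soft uniqueness of invariant complex lines and fixed points.
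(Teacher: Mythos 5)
Your proposal is correct and takes essentially the same route as the paper: the forward direction by uniqueness of the fixed point and invariant line/fan, the dichotomy of Lemma~\ref{lem-sigma-inf-fan} with the same normalization $A=T_{[1,0]}$ and the same computation on $\partial F$ (reflection case fails, half-turn case works), and Corollary~\ref{charach} to globalize, all resting on Lemma~\ref{refl2}. Your only repackaging is to state up front the criterion ``$\sigma$ decomposes $A$ iff $\sigma A\sigma=A^{-1}$'' (which the paper uses implicitly, e.g.\ when it rejects the leaf case because $\sigma\circ A$ is not an involution) and to settle the screw/2-step converse by restriction to the invariant line plus the transverse rotation angle, where the paper instead exhibits two boundary points exchanged by $A\circ\sigma$; these are equivalent in substance.
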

\Pf
Parts (e1-e3) are Proposition~2.4 of \cite{FalPau}, part (l) is  Proposition~4(2) 
of \cite{W2} (and follows from Proposition~3.1 of \cite{FZ}). Let us prove part (p).

Denote by $p$ the fixed point of $A$. Assume that $A=\sigma\sigma'$ where $\sigma'$ is another real reflection. 
Because both $\sigma$ and $\sigma'$ are involutions, either $\sigma$ and $\sigma'$ fix $p$, or there exists 
$q\neq p$ in $\partial\HCd$ such that $\sigma$ and $\sigma'$ both swap $p$ and $q$. In this case, $A$ would 
fix two distinct points in $\partial\HCd$, which is not possible for a parabolic. Thus the fixed real plane of 
$\sigma$ contains $p$. Replacing $p$ by $V$ and using the fact that $V$ is  the unique stable complex line or 
invariant fan of $A$, we obtain that $\sigma$ and $\sigma'$ both preserve $V$.

Conversely, let $\sigma$ be such a real reflection. 
\begin{enumerate}
 \item If $A$ is 2-step unipotent or screw parabolic, then $V$ is a complex line. The restriction of $\sigma$ 
to $V$ is an involution fixing a boundary point of $V$. As $V$ is a copy of the Poincar\'e disk, this implies 
that the restriction $\sigma_{\vert V}$ is a symmetry about a geodesic, and therefore $L$ intersects $V$ along a 
geodesic $\gamma$, one of whose endpoints is the fixed point of $A$. Let us call $a$ the other endpoint of 
$\gamma$, and $b=A(a)$. It is a simple exercise to check that $A\circ \sigma$ exchanges $a$ and $b$. 
Therefore $A\circ \sigma$, which is an antiholomorphic isometry, exchanges two points of $\partial\HCd$. 
By Lemma \ref{refl2}, it is a real reflection and therefore $\sigma$ decomposes $A$.
\item If $A$ is 3-step unipotent, we may assume by conjugating that $A=T_{[1,0]}$; then the  boundary of its 
invariant fan $F_A$  is the vertical plane $\{y=0\}$, on which $A$ acts as a horizontal translation by $1$. In view of Lemma 
\ref{lem-sigma-inf-fan}, the fixed $\R$-circle of $\sigma$ is either a horizontal line $\{t=t_0\}$ contained in the plane $\partial F_A$, or an infinite $\R$-circle 
orthogonal to this vertical plane. 

In the first case $\sigma\circ A$ acts on $\partial F_A$ by $(x,t)\longmapsto (x+1,t_0-t)$ which is not an 
involution, and therefore $\sigma$ cannot decompose $A$. In the second case, the restriction of $\sigma$ to $\partial F_A$ is a 
half-turn, and so is the restriction of $\sigma\circ A$. The latter is also the restriction to $\partial F_A$ of a reflection $\sigma'$ about 
a infinite $\R$-circle; then $A=\sigma\circ\sigma'$ by Corollary \ref{charach}, as $A$ and $\sigma\circ\sigma'$ agree on their common invariant fan.
\end{enumerate}
\EPf

The following lemma describes real reflections that preserve a given fan. We state it only for fans through $q_\infty$, as this is 
all we will need. Let $F$ be a fan through $q_\infty$ and $(L_t)_{t\in\R}$ be the foliation of its boundary by infinite 
$\R$-circles. Let $L$ be an infinite $\R$-circle not contained in or parallel to $\partial F$, and $L_{t_0}$ be the unique leaf of $\partial F$ such that 
$L\cap L_{t_0}$ is nonempty. We will say that $L$ and $F$ are orthogonal whenever $L$ and $L_{t_0}$ are orthogonal in the contact 
plane at the point $L\cap L_{t_0}$. It is a direct consequence of Lemma \ref{lem-Rplane-infinite} that the reflection about an 
infinite $\R$-circle preserves any fan which is orthogonal to it.
\begin{lem}\label{lem-sigma-inf-fan}
Let $F$ be a fan through $q_\infty$. Let $\sigma$ be the real reflection about a real plane $R$. Then $\sigma$ preserves
$F$ if and only $R$ contains the point $q_\infty$ on its boundary and one of the following occurs:
\begin{enumerate}
 \item The real plane $R$ is one of the leaves of $F$, in which case $\sigma$ acts on the vertical plane
$\partial F$ as the reflection across the affine line $\partial R$.
\item The real plane $R$ intersects the Heisenberg group along an affine line orthogonal to the vertical plane $\partial F$. In 
that case, it acts on $\partial F$ as a Euclidean half-turn.
\end{enumerate}
\end{lem}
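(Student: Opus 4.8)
The plan is to prove Lemma~\ref{lem-sigma-inf-fan} by exploiting Lemma~\ref{lem-Rplane-infinite}, which already gives a complete description of real reflections whose $\R$-circle passes through $q_\infty$, together with the explicit parametrization of the boundary foliation of $F$ from Lemma~\ref{lem-explicit-boundary-foliation}. First I would dispose of the claim that $R$ must contain $q_\infty$. Since $F$ is a fan through $q_\infty$, the point $q_\infty$ is the common endpoint of every leaf of the foliation and in particular lies on $\partial F$. If $\sigma$ preserves $F$ as a set, then $\sigma$ preserves $\partial F$, hence permutes the set of points of $\partial\HCd$ lying on $\partial F$. The point $q_\infty$ is distinguished among these as the unique point at which all leaves of the real foliation are mutually tangent (they are parallel vertical lines sharing the endpoint $q_\infty$); any isometry preserving $F$ leaf-by-leaf-structure must fix this distinguished point, so $\sigma(q_\infty)=q_\infty$. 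Thus $\partial R$ is an infinite $\R$-circle, and by Lemma~\ref{lem-Rplane-infinite} it is an affine line in the Heisenberg group contained in its own contact plane.

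Having reduced to the case $\sigma(q_\infty)=q_\infty$, I would normalize using the description in Lemma~\ref{lem-explicit-boundary-foliation}: after applying an element of the stabilizer of $q_\infty$ I may take $\partial F$ to be the vertical plane above the real axis, i.e.\ $w=1$, $k=0$, with foliation by the horizontal lines $L_{t_0}=\{[s,t_0]:s\in\R\}$. The reflection $\sigma$ restricted to the Heisenberg group acts, by Lemma~\ref{lem-Rplane-infinite}(1), as the Euclidean half-turn about the affine line $\partial R$. The condition that this half-turn preserve the vertical plane $\partial F$ forces $\partial R$ to be either parallel to $\partial F$ (so that $\partial R$ is itself a horizontal line $L_{t_0}$, giving case~(1)) or to meet $\partial F$ in a line that the half-turn sends into $\partial F$; a half-turn about a line $\ell$ preserves a plane $\Pi$ containing a point of $\ell$ precisely when $\ell$ is contained in $\Pi$ or meets $\Pi$ orthogonally. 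Translating this Euclidean dichotomy back through the constraint that $\partial R$ be Legendrian (contained in the contact plane at its points) yields exactly the two listed possibilities, and in case~(2) the induced action on $\partial F$ is the half-turn about the intersection point, as claimed.

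For the converse, both directions follow from the remark preceding the lemma: if $R$ is a leaf of $F$ then trivially $\sigma(F)=F$ since $\sigma$ fixes $R$ pointwise and permutes the remaining leaves (it acts on $\partial F$ as reflection across $\partial R$, preserving the family of parallel leaves); and if $\partial R$ meets $\partial F$ orthogonally in the contact plane, then $F$ is orthogonal to $L=\partial R$ in the sense defined just before the lemma, so the reflection about $L$ preserves $F$ by the consequence of Lemma~\ref{lem-Rplane-infinite} noted there. I expect the main obstacle to be the careful geometric bookkeeping in the forward direction: one must verify that the two a priori Euclidean configurations of a preserved line under a half-turn are \emph{compatible} with the Legendrian constraint on $\partial R$, i.e.\ that no further infinite $\R$-circles preserve $\partial F$ beyond the two families described. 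This amounts to checking, using the $1$-form $\alpha=dt-2x\,dy+2y\,dx$, that an affine line contained in its own contact plane and inducing a symmetry of the vertical plane $\{y=0\}$ is necessarily either horizontal within that plane or perpendicular to it, which is the one computation I would carry out in detail.
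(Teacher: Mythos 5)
Your first step (any $\sigma$ preserving $F$ must fix $q_\infty$, hence $\partial R$ is an infinite $\R$-circle) is correct and is essentially the paper's own argument. The genuine gap is in the forward direction of your dichotomy, and it is caused by your choice to normalize the \emph{fan} rather than the \emph{reflection}. Having put $\partial F=\{y=0\}$, you must handle an arbitrary real reflection fixing $q_\infty$, and at that point you invoke Lemma~\ref{lem-Rplane-infinite}(1) to assert that $\sigma$ acts on the Heisenberg group as the \emph{Euclidean} half-turn about $\partial R$, and then apply the Euclidean fact that a half-turn about $\ell$ preserves a plane $\Pi$ only if $\ell\subset\Pi$ or $\ell\perp\Pi$. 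But the Euclidean half-turn description is not ${\rm Isom}(\mathfrak{N})$-invariant: Heisenberg translations shear the $t$-coordinate and are not Euclidean isometries of $\R^3$, so that description holds only for $\R$-circles in special position (such as the horizontal axis of the standard reflection $[z,t]\mapsto[\bar z,-t]$), not for all the reflections your normalization forces you to consider. Concretely, let $\tau:[z,t]\mapsto[-\bar z,-t]$ (the conjugate of the standard reflection by the rotation $[z,t]\mapsto[iz,t]$, with $\R$-circle the $y$-axis) and set $\sigma=T_{[1,0]}\,\tau\,T_{[1,0]}^{-1}$. Using the translation action $[z,t]\mapsto[z+1,t-2\Im(z)]$ from the proof of Proposition~\ref{prop-invariant-fan}, one computes $\sigma(x,y,t)=(2-x,\,y,\,-4y-t)$, a real reflection whose fixed $\R$-circle is the affine line $\ell=\{(1,s,-2s)\}$ (the image of the $y$-axis under $T_{[1,0]}$, hence an infinite $\R$-circle). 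This $\sigma$ preserves $\{y=0\}$, inducing $z\mapsto 2-\bar z$ on $\C$ and acting on $\partial F$ as a half-turn about $(1,0)$; so it preserves the fan over $\R$ and belongs to case (2) of the lemma. Yet $\ell$ is neither contained in $\{y=0\}$ nor Euclidean-perpendicular to it, and $\sigma$ is \emph{not} the Euclidean half-turn about $\ell$ (its linear part is not orthogonal). Your dichotomy therefore misses a whole two-parameter family of fan-preserving reflections (the conjugates of $\tau$ by translations $T_{[a,c]}$, $a,c\in\R$, with $\R$-circles $\{(a,s,c-2as)\}$), and the final computation you propose — that a Legendrian line inducing a symmetry of $\{y=0\}$ must be horizontal in that plane or perpendicular to it — would come out false. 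The word ``orthogonal'' in case (2) can only be taken in the conjugation-invariant contact sense defined just before the lemma (the projections to $\C$ of $\partial R$ and $\partial F$ are perpendicular lines), not in the Euclidean sense in $\R^3$.

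This is precisely the trap the paper's proof avoids by normalizing the other object: it conjugates by ${\rm Isom}(\mathfrak{N})$ so that $\sigma$ \emph{is} the standard reflection $[z,t]\mapsto[\bar z,-t]$ — for which the boundary action genuinely is the Euclidean half-turn about the $x$-axis — and then classifies the vertical planes invariant under this one explicit map, namely $\{y=0\}$ (case (1)) and the planes $\{x=x_0\}$ (case (2)). If you wish to keep your normalization of $F$, you must give up the Euclidean shortcut and instead describe the boundary action of the reflection about a general infinite $\R$-circle (an affine but generally non-orthogonal involution), for instance by working with the induced anti-affine map on $\C$; at that point your argument becomes a repackaging of the paper's computation.
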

\begin{proof}
First, $\sigma$ fixes the point $q_\infty$. Indeed, as it preserves $F$, $\sigma$ preserves globally the (singular) foliation of $F$ by 
real planes and therefore fixes the intersection of all leaves, which is $q_\infty$. This means that $R$ contains $q_\infty$, and that 
the corresponding $\R$ circle is infinite. Conjugating by an element of ${\rm Isom}(\mathfrak{N})$, we can assume that $\sigma$ is the
real reflection about the real plane $\HRd\cap\HCd$, which acts on the boundary by $\sigma([z,t])=[\overline z,-t]$. A vertical plane 
is stable under $\sigma$ if and only if it is the vertical plane $\{y=0\}$, in which case $R$ is a leaf of the corresponding fan, or a 
plane orthogonal to the $x$-axis, which corresponds to the second case. 
\end{proof}

\section{Configurations of points and cross-ratios}
\subsection{Triples of points}
\begin{dfn}
Given a triple $(p_1,p_2,p_3)$ of distinct points in $\HCn\cup\partial\HCn$ and lifts $P_i \in \C^{n,1}$ of the $p_i$, the ratio
\begin{equation}
T(p_1,p_2,p_3)=\dfrac{\la P_1,P_2\ra\la P_2,P_3\ra\la P_3,P_1\ra}{\la P_1,P_3\ra\la P_3,P_2\ra\la P_2,P_1\ra}
\end{equation}
does not depend on the lifts $P_i$. We will call $T(p_1,p_2,p_3)$ the \textit{triple-ratio} of $(p_1,p_2,p_3)$.\end{dfn}
Note that $T(p_1,p_2,p_3)$ is also well-defined if 2 or more of the points are equal in $\HCn$ (but not in $\partial \HCn$).

Observe that holomorphic isometries (elements of PU($n$,1)) clearly preserve the triple-ratio, whereas for any 
antiholomorphic isometry $g$, we have $T(g(p_1),g(p_2),g(p_3))=\overline{T(p_1,p_2,p_3)}$. The triple-ratio is related to the 
classical Cartan angular invariant $\A$ (see \cite{C}) and Brehm's shape invariant $\sigma$ (see \cite{Br}) for triangles as 
follows.

\begin{itemize}
\item  The Cartan angular invariant of three points $p_1,p_2,p_3 \in \partial \HCn$ is defined as: 
$$\A(p_1,p_2,p_3)=\arg(-\la
  P_1,P_2\ra\la P_2,P_3\ra\la P_3,P_1\ra).$$ 
  It relates to the triple-ratio by
  $$T(p_1,p_2,p_3)=e^{2i\A(p_1,p_2,p_3)}.$$
\item Brehm's shape invariant $\sigma$ of three points in $\HCn$ is related to the normalized triple product

$$\tilde T(p_1,p_2,p_3)=\dfrac{\la P_1,P_2\ra\la P_2,P_3\ra\la P_3,P_1\ra}{\la P_1,P_1\ra\la P_2,P_2\ra\la P_3,P_3\ra}.$$
Namely, $\sigma=-{\rm Re} (\tilde{T})$. Note that:
$$T(p_1,p_2,p_3)=\dfrac{\tilde T(p_1,p_2,p_3)}{\tilde T(p_1,p_3,p_2)}.$$
\end{itemize}
We refer the reader to chapter 7 of \cite{G} for classical properties of the Cartan invariant. Note in particular 
that the Cartan invariant satisfies $\A(p_1,p_2,p_3)\in[-\pi/2,\pi/2]$, and that $\A(p_1,p_2,p_3)=\pm\pi/2$ 
(resp. $\A(p_1,p_2,p_3)=0$) if and only if the three points are contained in a complex line (resp. a real plane). Also, the 
Cartan invariant classifies triples of pairwise distinct points in $\partial \HCn$ up to holomorphic isometries.

The following classification of triples of points in $\HCn$ is due to Brehm (\cite{Br}).

\begin{thm}\label{Brehm}(Brehm) Let $(x_1,x_2,x_3)$, $(y_1,y_2,y_3)$ be two triples
  of points in ${\rm H}_\C^n$. There exists $g \in {\rm Isom}({\rm
    H}_\C^n)$ such that $g(x_i)=y_i$ (for $i=1,2,3$) if and only if:

\begin{itemize} 

\item $d(x_1,x_2)=d(y_1,y_2)$, $d(x_2,x_3)=d(y_2,y_3)$,
  $d(x_1,x_3)=d(y_1,y_3)$ and
  $T(x_1,x_2,x_3)=T(y_1,y_2,y_3)$, in which case $g$
  is holomorphic, or
 
\item $d(x_1,x_2)=d(y_1,y_2)$, $d(x_2,x_3)=d(y_2,y_3)$,
  $d(x_1,x_3)=d(y_1,y_3)$ and
  $T(x_1,x_2,x_3)=\overline{{T}(y_1,y_2,y_3)}$, in which
  case $g$ is antiholomorphic.
\end{itemize}
\end{thm}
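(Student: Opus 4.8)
The plan is to reduce the classification to a statement about Gram matrices of lifts, exploiting the fact that the Hermitian form together with the action of $\mathrm{U}(n,1)$ controls everything. First I would dispose of the necessity (``only if'') direction, which is immediate from the remarks preceding the statement: any $g \in \mathrm{Isom}(\HCn)$ preserves distances, and it was already observed that holomorphic isometries preserve $T$ while antiholomorphic ones replace $T$ by $\overline{T}$. Since $\mathrm{Isom}(\HCn) = \mathrm{PU}(n,1) \sqcup (\text{antiholomorphic part})$, an isometry carrying one triple to the other falls into exactly one of the two listed cases.

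For sufficiency I would choose lifts $P_i$ of $x_i$ and $Q_i$ of $y_i$ normalized so that $\la P_i,P_i\ra = \la Q_i,Q_i\ra = -1$. By the distance formula (\ref{dist}), equality of the pairwise distances is equivalent to $|\la P_i,P_j\ra| = |\la Q_i,Q_j\ra|$ for all $i,j$; write these common moduli as $r_{ij}$, and note $r_{ij}=\cosh\tfrac12 d(x_i,x_j)\geq 1$. The only remaining freedom in the lifts is a phase $P_i \mapsto e^{i\phi_i}P_i$, under which $\arg\la P_i,P_j\ra$ changes by $\phi_i-\phi_j$; hence the single gauge-invariant phase datum is the argument $\psi$ of the triple product $N=\la P_1,P_2\ra\la P_2,P_3\ra\la P_3,P_1\ra$, and one computes directly that $T = N/\overline{N} = e^{2i\psi}$. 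Thus the hypothesis $T(x_1,x_2,x_3)=T(y_1,y_2,y_3)$ gives $\psi_P \equiv \psi_Q \pmod{\pi}$, i.e. $N_Q = \pm N_P$.

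Here is the key step. I would rule out the sign $N_Q = -N_P$ by a determinant computation: expanding the $3\times 3$ Gram determinant with diagonal $-1$ gives $\det G = -1 + r_{12}^2 + r_{13}^2 + r_{23}^2 + 2\Re N$. Since both triples genuinely lie in $\HCn$, in the generic case each Gram matrix has signature $(2,1)$ and hence $\det G < 0$; if we had $N_Q=-N_P$ then $\Re N_Q=-\Re N_P$, and adding the two inequalities $\det G_P<0$ and $\det G_Q<0$ would force $r_{12}^2+r_{13}^2+r_{23}^2 < 1$, contradicting $r_{ij}\geq 1$. Therefore $N_Q = N_P$, and after adjusting the phases of the $Q_i$ we obtain $\la P_i,P_j\ra = \la Q_i,Q_j\ra$ for all $i,j$, so the two Gram matrices coincide.

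Finally I would upgrade this equality of Gram matrices to an isometry of the ambient space. When the $P_i$ span a nondegenerate (signature $(2,1)$) subspace $W$, the map $P_i \mapsto Q_i$ extends to a linear isometry $W \to W'$, which I extend to all of $\C^{n,1}$ by any isometry of the positive-definite orthogonal complements $W^\perp \to (W')^\perp$; this produces $g \in \mathrm{U}(n,1)$ with $gP_i = Q_i$, giving the holomorphic isometry. The antiholomorphic case $T(x_1,x_2,x_3)=\overline{T(y_1,y_2,y_3)}$ reduces to the holomorphic one applied to $(x_1,x_2,x_3)$ and $(\sigma_0(y_1),\sigma_0(y_2),\sigma_0(y_3))$: since $\sigma_0$ preserves distances and conjugates $T$, these two triples satisfy the holomorphic hypotheses, yielding holomorphic $h$ with $h(x_i)=\sigma_0(y_i)$, whence $g=\sigma_0\circ h$ is the desired antiholomorphic isometry. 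The main obstacle is precisely the phase analysis of the middle steps --- verifying that the triple ratio $T$, rather than the full (non gauge-invariant) triple product, determines the Gram matrix up to the $\mathrm{U}(n,1)$-action --- together with handling degenerate configurations (collinear triples, or triples contained in a complex line), where $W$ is degenerate and the extension must be carried out on the nondegenerate part of the span.
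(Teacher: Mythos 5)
Your proof is correct, but it takes a genuinely different route from the paper: the paper does not prove this statement at all, it simply cites Brehm's classification theorem from \cite{Br} and adds a one-line remark that Brehm's formulation (in terms of the shape invariant $-\Re(\tilde T)$ together with the side lengths, which determine $|\tilde T|$) is equivalent to the two-case statement in terms of the triple-ratio $T$. Your argument is a self-contained replacement built from linear algebra: normalization of lifts to norm $-1$, the observation that $T=N/\overline{N}$ where the triple product $N$ is the only gauge-invariant phase datum, the Gram determinant identity $\det G=-1+r_{12}^2+r_{13}^2+r_{23}^2+2\Re N$ to exclude the sign ambiguity $N_Q=-N_P$, and a Witt-type extension to produce the global isometry. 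Two small refinements would make it airtight. First, run the determinant step with the non-strict inequality $\det G\leq 0$: the span $W$ of the lifts always contains a negative vector, so the restricted form has exactly one negative eigenvalue and trivial radical (any radical vector is null and lies in $P_1^{\perp}$, which is positive definite), whence $\det G<0$ when $\dim W=3$ and $\det G=0$ when $\dim W\leq 2$ (the points then lie on a common complex line); adding $\det G_P\leq 0$ and $\det G_Q\leq 0$ still forces $r_{12}^2+r_{13}^2+r_{23}^2\leq 1$, contradicting $r_{ij}\geq 1$, so the sign is excluded in every configuration, not just the generic one. Second, the ``degenerate configurations'' you flag at the end never actually arise: by the same radical argument, $W$ is nondegenerate whatever its dimension, equal Gram matrices force identical linear dependencies among the lifts, and the extension by an isometry between the positive-definite orthogonal complements goes through verbatim. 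What your approach buys is independence from \cite{Br}; what the paper's citation buys is brevity, at the cost of leaving the translation between the two formulations (and the entire congruence argument) to the reader.
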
 

In fact Brehm's formulation is slightly different as he considers
$-{\rm Re} (\tilde{T})$ instead of $\tilde{T}$ (so his statement doesn't
include our 2 cases). This is equivalent because the norm of
$\tilde{T}$ is determined by the 3 side-lengths.

\subsection{The complex cross-ratio}
The following definition is due in this form to Goldman (\cite{G}) (following Koranyi and Reimann (\cite{KR})) in the case of boundary points:
\begin{dfn}
  Let $(p_1,p_2,p_3,p_4)$ be a quadruple of distinct points in
  $\HCn\cup\partial\HCn$. The quantity defined by
\begin{equation}\label{KRdef} {\bf X}(p_1,p_2,p_3,p_4)=\frac{\langle
    P_3,P_1 \rangle \langle P_4,P_2 \rangle}{\langle P_4,P_1 \rangle
    \langle P_3,P_2 \rangle}
\end{equation}
does not depend on the choice of lifts $P_i$ of the $p_i$'s, and is
called the \textit{complex cross-ratio} of $(p_1,p_2,p_3,p_4)$.
\end{dfn}
Note that ${\bf X}(p_1,p_2,p_3,p_4)$ is also well-defined when some of the 4 points coincide, as long as at most 2 of them coincide in $\partial\HCn$.

 The complex cross-ratio of boundary points has been studied in detail in \cite{G} (pp. 224--228), to which we refer the
reader for more details. As for the triple-ratio, it is a direct observation that holomorphic isometries preserve 
${\bf X}$ whereas antiholomorphic ones change it to its complex conjugate. One of our main tools will be finding conditions 
under which such a cross-ratio is real, in the spirit of the following result (Theorem 7.2.1 of \cite{G}):

\begin{thm}[Goldman]\label{gold}
  Let $(p_1,p_2,p_3,p_4)$  be a quadruple of distinct points in $\partial {\rm H}_\C^n$. Then ${\bf
    X}(p_1,p_2,p_3,p_4)$ is real and positive if and only if there
  exists a real reflection $\phi$ such that $\phi : p_1
  \leftrightarrow p_2$ and $p_3 \leftrightarrow p_4$.
\end{thm}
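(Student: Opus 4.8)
The plan is to prove both implications of Theorem~\ref{gold}, establishing the equivalence between reality and positivity of $\X(p_1,p_2,p_3,p_4)$ and the existence of a real reflection swapping $p_1 \leftrightarrow p_2$ and $p_3 \leftrightarrow p_4$. The easier direction is the ``if'' part: suppose such a real reflection $\phi$ exists. Since $\phi$ is antiholomorphic, it conjugates the cross-ratio, so $\X(p_1,p_2,p_3,p_4) = \overline{\X(\phi(p_1),\phi(p_2),\phi(p_3),\phi(p_4))}$. Because $\phi$ exchanges $p_1 \leftrightarrow p_2$ and $p_3 \leftrightarrow p_4$, the right-hand side becomes $\overline{\X(p_2,p_1,p_4,p_3)}$. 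First I would verify, directly from the definition~\eqref{KRdef}, the symmetry $\X(p_2,p_1,p_4,p_3) = \X(p_1,p_2,p_3,p_4)$ (swapping the roles in the numerator and denominator leaves the expression unchanged). This forces $\X$ to equal its own conjugate, hence $\X \in \R$. Getting positivity is the more delicate point of this direction: I would exploit the fact that $\phi$ has a fixed real plane and analyze the geometry of the four boundary points relative to it, or alternatively compute $\X$ explicitly using a normal form where $\phi$ is the standard real reflection $\sigma_0$, reducing to real coordinates in which the cross-ratio is manifestly positive.

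The harder direction is the ``only if'' part: given that $\X(p_1,p_2,p_3,p_4)$ is real and positive, I must construct the real reflection $\phi$. The strategy is to normalize using PU($n$,1). Since PU($n$,1) acts on $\partial\HCn$ and the cross-ratio is a holomorphic invariant, I would first move three of the points (say $p_1, p_2, p_3$) to standard positions in the Siegel model --- for instance sending $p_1 = q_\infty$, $p_2 = [0,0]$ (the origin of the Heisenberg group), and placing $p_3$ conveniently. The constraint that $\X$ is real and positive then translates into an explicit condition on the Heisenberg coordinates of $p_4$ (and the residual parameter of $p_3$). The key step is then to exhibit an explicit real reflection in these coordinates that performs the required swap; the natural candidate is the reflection about a suitable infinite $\R$-circle, which by Lemma~\ref{lem-Rplane-infinite} acts as a Euclidean half-turn on the relevant affine line. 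I expect the reality-and-positivity condition to be exactly what is needed to guarantee that such an $\R$-circle exists and that its associated reflection simultaneously swaps both pairs.

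The main obstacle will be the case analysis and bookkeeping in the normalization step, particularly handling degenerate configurations (for instance when the four points lie on a common complex line or a common real plane, or when some of the triple-ratios vanish). I would organize this by reducing to a two-real-parameter family describing the relative position of the pairs $\{p_1,p_2\}$ and $\{p_3,p_4\}$, using the stabilizer of the first pair to fix remaining freedom, and then checking that positivity of $\X$ precisely characterizes the configurations symmetric under an antiholomorphic involution. Since we are working on the boundary, I can lean on the classification of triples of boundary points by the Cartan angular invariant $\A$: the reality of $\X$ should be expressible through a relation among the Cartan invariants $\A(p_1,p_2,p_3)$, $\A(p_1,p_2,p_4)$ and the pairwise ``distances'' in the Koranyi--Cygan sense, and I would match this against the condition for an antiholomorphic symmetry to exist. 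Finally, once $\phi$ is constructed as an antiholomorphic isometry exchanging the two pairs, Lemma~\ref{refl2} immediately guarantees it is a genuine $\R$-reflection, completing the argument.
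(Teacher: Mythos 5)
Your ``if'' half is correct. The reality argument (antiholomorphicity conjugates $\X$, combined with the symmetry $\X(p_2,p_1,p_4,p_3)=\X(p_1,p_2,p_3,p_4)$) is exactly the remark the paper makes after the statement. Your second suggestion for positivity also works and is worth writing out: conjugating so that $\phi=\sigma_0$ and choosing lifts with $P_2=\overline{P_1}$ and $P_4=\overline{P_3}$ (legitimate because the Hermitian matrix is real), the definition gives $\X(p_1,p_2,p_3,p_4)=\vert\la P_3,P_1\ra\vert^2/\vert\la \overline{P_3},P_1\ra\vert^2>0$. Note that the paper obtains the necessity of positivity by a different route: using the identity \eqref{crosstriple} and the bound $\A\in[-\pi/2,\pi/2]$, it shows that $\X<0$ forces all four points onto the boundary of a common complex line with $\{p_1,p_2\}$ separating $\{p_3,p_4\}$, and no real reflection can swap both pairs in that configuration since a real reflection preserving a complex line acts on it as reflection in a geodesic.

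The genuine gap is the converse direction, and it is twofold. First, it is simply not proved: the sentence ``I expect the reality-and-positivity condition to be exactly what is needed'' is a restatement of the theorem, not an argument, and all of the content lies in that verification. Second, the concrete plan would fail at its key step: once you normalize $p_1=q_\infty$ and $p_2=[0,0]$, the reflection you must construct swaps $q_\infty$ with the origin, so it cannot fix $q_\infty$; since the boundary fixed-point set of a real reflection is exactly its $\R$-circle, that $\R$-circle does not pass through $q_\infty$ and is therefore \emph{not} an infinite $\R$-circle. Lemma~\ref{lem-Rplane-infinite} and the Euclidean half-turn picture apply only to infinite $\R$-circles, so your ``natural candidate'' can never effect the required swap; the reflections you actually need are inversion-like maps in Heisenberg coordinates, and determining when one of them exchanges both pairs is precisely the computation that is missing. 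For comparison, the paper does not prove this direction at all: it quotes it as Theorem~7.2.1 of \cite{G}, and its own contribution is exactly the two necessity arguments described above. If you want a self-contained proof rather than a citation, a more promising route is to imitate the paper's proof of Theorem~\ref{moregold}, replacing Brehm's theorem by the classification of triples of boundary points by the Cartan invariant: from \eqref{crosstriple}, $\X>0$ yields $\A(p_1,p_2,p_3)=\A(p_1,p_2,p_4)$, hence there is an antiholomorphic isometry $g$ with $g(p_1)=p_2$, $g(p_2)=p_1$, $g(p_3)=p_4$; but you would still owe an argument that $g$ can be chosen with $g(p_4)=p_3$ (for interior points this is exactly what Brehm's theorem delivers, and it is not automatic here) before Lemma~\ref{refl2} upgrades $g$ to a real reflection.
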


Note that, if there exists such a real reflection $\phi$, then:
\begin{eqnarray}
 \X(p_1,p_2,p_3,p_4)  =  \overline{\X(\phi(p_1),\phi(p_2),\phi(p_3),\phi(p_4))}
 =  \overline{\X(p_2,p_1,p_4,p_3)}
\end{eqnarray}
Going back to the definition of $\X$, it is straightforward that $\X(p_2,p_1,p_4,p_3)=\X(p_1,p_2,p_3,p_4)$, and we see 
that the condition that $\X\in\R$ is indeed necessary. In \cite{G} the assumption that ${\bf X}>0$ is omitted, but it must be added for the following
reason.  $\X$ is related to triple products by: 

\begin{equation}\label{crosstriple}
\X(p_1,p_2,p_3,p_4)=\dfrac{\la P_1,P_2\ra\la P_2,P_3\ra\la P_3,P_1\ra}{\la P_1,P_2\ra\la P_2,P_4\ra\la P_4,P_1\ra}\cdot\dfrac{|\la P_4,P_2\ra|^2}{|\la P_3,P_2\ra|^2}
\end{equation}
Because the Cartan invariant belongs to $[-\pi/2,\pi/2]$, we see that if $\X$ is real and negative 
the two triple products $\la P_1,P_2\ra\la P_2,P_i\ra \la P_i,P_1\ra$ ($i=3,4$) must have arguments
either both equal to $\pi/2$ or both equal to $-\pi/2$. This means
that $p_3$ and $p_4$ belong to the complex line spanned by $p_1$ and
$p_2$, and are on the same side of the geodesic $(p_1p_2)$. See Proposition 2 of
\cite{KR} and property \# 7 on p. 226 of \cite{G}. However, if
$p_1,p_2,p_3,p_4$ are in such a configuration then there cannot exist
a real reflection $\phi$ such that $\phi : p_1 \leftrightarrow p_2$
and $p_3 \leftrightarrow p_4$. Indeed, if a real reflection preserves
a complex line then it acts on it by reflection in a geodesic. 

The following basic observation will allow us to project  orthogonally onto the
complex sides of the quadrilateral $(p_1,p_2,p_3,p_4)$.

\begin{lem}\label{proj} Let $p_1,p_2,p_3,p_4 \in \overline{{\rm
      H}_\C^n}$ with $p_1 \neq p_2$, and let $\pi_{12}$ denote orthogonal projection onto
  the complex line $L_{12}$ spanned by $p_1$ and $p_2$. Then: ${\bf
    X}(p_1,p_2,p_3,p_4)={\bf X}\left(
    p_1,p_2,\pi_{12}(p_3),\pi_{12}(p_4)\right) $.
\end{lem}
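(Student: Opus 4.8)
The plan is to reduce the statement to the elementary fact that orthogonal projection onto a nondegenerate subspace preserves Hermitian products with vectors lying in that subspace. First I would set up the linear algebra. Since $p_1 \neq p_2$ are distinct points of $\overline{\HCn}$, the complex line $L_{12}$ they span meets $\HCn$, so the plane $\tilde{L}_{12}=\mathrm{span}_\C(P_1,P_2)$ carries a restriction of $\la\cdot,\cdot\ra$ of signature $(1,1)$, hence is nondegenerate. This yields an orthogonal splitting $\C^{n,1}=\tilde{L}_{12}\oplus \tilde{L}_{12}^\perp$ in which $\tilde{L}_{12}^\perp$ has signature $(n-1,0)$, i.e.\ is positive definite, and I would let $\pi_{12}$ be the associated linear projection onto $\tilde{L}_{12}$, which descends to orthogonal projection onto $L_{12}$.

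Before using it I would check that $\pi_{12}$ is well-defined on points of $\overline{\HCn}$. Writing $P=\pi_{12}(P)+P^\perp$ with $P^\perp\in\tilde{L}_{12}^\perp$ gives $\la P,P\ra = \la \pi_{12}(P),\pi_{12}(P)\ra + \la P^\perp,P^\perp\ra$ with $\la P^\perp,P^\perp\ra\geq 0$. Hence whenever $\la P,P\ra\leq 0$ the image satisfies $\la \pi_{12}(P),\pi_{12}(P)\ra\leq 0$, and in particular $\pi_{12}(P)\neq 0$ (a nonzero vector of the positive definite space $\tilde{L}_{12}^\perp$ cannot have nonpositive norm). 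Thus $\pi_{12}(P_3)$ and $\pi_{12}(P_4)$ are legitimate lifts of points $\pi_{12}(p_3),\pi_{12}(p_4)\in\overline{L_{12}}\subset\overline{\HCn}$, so the right-hand cross-ratio makes sense.

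The heart of the argument is the single identity: for every $P$ and every $Q\in\tilde{L}_{12}$ one has $\la \pi_{12}(P),Q\ra=\la P,Q\ra$, because $P-\pi_{12}(P)=P^\perp$ is orthogonal to $\tilde{L}_{12}\ni Q$. Applying this with $Q=P_1$ and $Q=P_2$ (both lying in $\tilde{L}_{12}$) to $P=P_3$ and $P=P_4$ shows that each of the four products $\la P_3,P_1\ra$, $\la P_4,P_2\ra$, $\la P_4,P_1\ra$, $\la P_3,P_2\ra$ appearing in the definition of $\X$ is unchanged when $P_3,P_4$ are replaced by $\pi_{12}(P_3),\pi_{12}(P_4)$. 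Since these are precisely the four factors of $\X(p_1,p_2,p_3,p_4)$, the cross-ratio is unchanged, which is the claim.

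There is no serious obstacle here: the only point requiring care is the well-definedness of the projection, namely the nonvanishing of the projected lift and the fact that it lands in $\overline{L_{12}}$, both of which the signature computation above settles. I would also emphasize that the identity needs only \emph{one} of the two arguments of each Hermitian product to lie in $\tilde{L}_{12}$, and this is exactly guaranteed by the index pattern of the cross-ratio, in which every factor pairs one of $p_3,p_4$ with one of $p_1,p_2$; this is why only $p_3,p_4$ get projected while $p_1,p_2$ are left untouched.
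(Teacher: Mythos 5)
Your proof is correct and is essentially the paper's own argument: the paper also realizes $\pi_{12}$ as the linear orthogonal projection onto $\mathrm{span}_\C(P_1,P_2)$ (written there via a normalized polar vector $c_{12}$ as $\Pi_{12}(z)=z-\la z,c_{12}\ra c_{12}$) and observes that $\la \Pi_{12}(p),p_i\ra=\la p,p_i\ra$ for $i=1,2$, so all four factors of $\X$ are unchanged. Your added checks --- nondegeneracy of the restricted form and nonvanishing of the projected lifts --- are sound refinements of the same approach (and make it work verbatim for all $n$), not a different route.
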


\Pf Let $c_{12}$ be a polar vector for $L_{12}$, normalized so that
$\langle c_{12},c_{12} \rangle =1$. Then $\pi_{12}$ is the
projectivization of the linear projection in $\C^{2,1}$ given by:
$\Pi_{12}(z)=z-\langle z,c_{12} \rangle c_{12}$. Then, for any point $p$ in $\HCn\cup\partial\HCn$:

$$\la \Pi_{12}(p),p_i\ra=\la p-\la p,c_{12}\ra c_{12},p_i\ra=\la p,p_i\ra \  (i=1,2).$$
The result follows by substituting $p_3$ and $p_4$ in this expression. 
(Here and whenever it is convenient we will slightly
abuse notation by using the same letter for points in $\C P^n$ and
their lifts to $\C^{n,1}$; we will however insist that lifts of points
inside ${\rm H}_\C^n$ have norm $-1$ and lifts of points outside
$\overline{{\rm H}_\C^n}$ have norm 1.) \EPf

\subsection{Cross-ratios and real reflections}

Recall that the classical cross-ratio of 4 distinct points in 
$\C P^1=\C \cup \{ \infty \}$ is defined by (see for instance \cite{G}):

\begin{equation}\label{usual}
[z_1,z_2;z_3,z_4]=\dfrac{(z_4-z_1)(z_3-z_2)}{(z_4-z_2)(z_3-z_1)}
\end{equation} 

It is invariant under the diagonal action of ${\rm PGL}(2,\C)$, and is real if and only if the 4
points are cocylic or collinear. Moreover, $[z_1,z_2;z_3,z_4]$ is positive if
and only if the pairs $(z_1,z_2)$ and $(z_3,z_4)$ do not separate each
other on the common circle/line. The KR cross-ratio generalizes the
classical cross-ratio in the following sense:

\begin{lem}\label{classic} If $p_1,p_2,p_3,p_4$ lie in a common
  complex line $C\subset \C P^n$, then  

$${\bf X}(p_1,p_2,p_3,p_4)=[\sigma(p_1), \sigma(p_2);p_4,p_3]$$ 
 where $\sigma$ denotes inversion in the boundary circle of $C$.
\end{lem}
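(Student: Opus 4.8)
The plan is to reduce everything to a computation inside the complex line $C$ itself. Since $C$ meets $\pi(V^-)$ nontrivially, the restriction of $\la\cdot,\cdot\ra$ to the $2$-plane $\tilde C$ over $C$ has signature $(1,1)$, so $C$ is a copy of $\mathrm H^1_\C$ whose boundary $\partial C$ is a circle in $\C P^1$. First I would note that it suffices to prove the identity for a single complex line: applying a holomorphic isometry $g\in\mathrm{PU}(n,1)$ with $g(C)=\mathrm H^1_\C$ transforms the asserted identity for $C$ into the one for the standard line. Indeed holomorphic isometries preserve $\X$ (by the observation following the definition of the cross-ratio), $g$ restricts on $\partial C$ to an element of $\mathrm{PSU}(1,1)\subset\mathrm{PGL}(2,\C)$ which preserves the classical cross-ratio, and $g\sigma g^{-1}$ is exactly inversion in the image circle $\partial(gC)$.

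For the model I would take the ball model of $\mathrm H^1_\C$ with Hermitian form $\la Z,W\ra=z_1\overline{w_1}-z_2\overline{w_2}$, for which $\partial C$ is the unit circle and inversion in it is $\sigma:z\mapsto 1/\overline z$. Lifting a point with affine coordinate $p$ to $P=(p,1)^T$ (and $\infty$ to $(1,0)^T$) gives $\la P_i,P_j\ra=p_i\overline{p_j}-1$, so the definition of $\X$ becomes
\begin{equation}
\X(p_1,p_2,p_3,p_4)=\frac{(p_3\overline{p_1}-1)(p_4\overline{p_2}-1)}{(p_4\overline{p_1}-1)(p_3\overline{p_2}-1)}.
\end{equation}

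Next I would expand the right-hand side. Using $\sigma(p_k)=1/\overline{p_k}$ and the definition \eqref{usual}, the classical cross-ratio reads
\begin{equation}
[\sigma(p_1),\sigma(p_2);p_4,p_3]=\frac{(p_3-1/\overline{p_1})(p_4-1/\overline{p_2})}{(p_3-1/\overline{p_2})(p_4-1/\overline{p_1})}.
\end{equation}
Rewriting each factor as $p_i-1/\overline{p_j}=(p_i\overline{p_j}-1)/\overline{p_j}$ produces a common factor $1/(\overline{p_1}\,\overline{p_2})$ in both numerator and denominator; cancelling it leaves precisely the expression above for $\X$, which establishes the equality.

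The computation itself is short, so the only real care needed is in the bookkeeping: the pairing $(P_3,P_1),(P_4,P_2)$ built into the Hermitian cross-ratio is what forces both the interchange of $p_3$ and $p_4$ and the appearance of $\sigma$ on the first two slots of the right-hand side, and a mismatched ordering is exactly where the equality would fail. The degenerate configurations (one $p_i$ equal to $\infty$, or coincidences among the points) are handled either by the normalization of the first step or by continuity, since both cross-ratios are continuous where they are defined.
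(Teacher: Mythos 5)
Your proposal is correct and follows essentially the same route as the paper: normalize by an element of ${\rm PU}(n,1)$ so that $C$ becomes the standard coordinate line, take the standard lifts $[z_i,0,\dots,0,1]^T$ so that $\langle P_i,P_j\rangle=z_i\overline{z_j}-1$, and verify the identity by the factorization $z_i\overline{z_j}-1=\overline{z_j}\,(z_i-1/\overline{z_j})$. The only differences are cosmetic: you spell out why the normalization is harmless (invariance of both cross-ratios and conjugation of $\sigma$) and mention degenerate configurations, steps the paper leaves implicit.
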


\Pf By applying an element of PU($n$,1), we may assume that the
complex line $C$ containing the $p_i$ is the first coordinate axis
of $\C^n$ (seen as an affine chart of $\C P^n$) in the ball model.  Each point of $C$ has
a lift to $\C^{n,1}$ of the form $[z,0,...,0,1]$, and in these coordinates,
$\sigma$ is given by $z\longmapsto 1/\bar z$. We lift each $p_i$ as a
vector $P_i=[z_i,0,...,0,1]^T$ (the standard lift in the ball model), and
compute:

$$ 
{\bf X}(p_1,p_2,p_3,p_4)=
\frac{(z_3\overline{z_1}-1)(z_4\overline{z_2}-1)}{(z_4\overline{z_1}-1)(z_3\overline{z_2}-1)}
=
\frac{(z_3-1/\overline{z_1})(z_4-1/\overline{z_2})}{(z_3-1/\overline{z_2})(z_4-1/\overline{z_1})}=[\sigma(z_1),
  \sigma(z_2);z_4,z_3] 
$$ \EPf

From Lemma \ref{proj} and \ref{classic} and the properties of the classical cross-ratio we obtain the 
following reality condition for ${\bf X}$:

\begin{prop}\label{Xreal} Let $p_1,p_2,p_3,p_4 \in \overline{{\rm
      H}_\C^n}$ with $p_1 \neq p_2$, and let $\pi_{12}$ denote orthogonal projection onto
  the complex line $L_{12}$ spanned by $p_1$ and $p_2$.  Then:
\begin{enumerate}
\item The complex cross-ratio ${\bf X}(p_1,p_2,p_3,p_4)$ is real and
  positive if and only if
\begin{itemize}
\item either the points
  $p_1,p_2,\pi_{12}(p_3),\pi_{12}(p_4)$ are all equidistant from a
  geodesic $\gamma$ in $L_{12}$, with $ p_1,p_2$ on one side of
  $\gamma$ and $\pi_{12}(p_3),\pi_{12}(p_4)$ on the other, 
\item or the $p_i$ are on the boundary of a common complex line and
  $\{p_1,p_2\}$ does not separate $\{p_3,p_4\}$ on this circle.
\end{itemize}
\item ${\bf X}(p_1,p_2,p_3,p_4)$ is real and negative if and only if $p_1,p_2,p_3,p_4$ are on the
  boundary of a common complex line and $\{p_1,p_2\}$ separates
  $\{p_3,p_4\}$ on this circle.
\end{enumerate}
\end{prop}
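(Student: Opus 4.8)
The plan is to reduce the reality of $\X(p_1,p_2,p_3,p_4)$ to a statement about the classical cross-ratio in a single complex line, using the two lemmas just established. By Lemma~\ref{proj}, orthogonal projection onto $L_{12}$ leaves the complex cross-ratio unchanged, so $\X(p_1,p_2,p_3,p_4)=\X(p_1,p_2,\pi_{12}(p_3),\pi_{12}(p_4))$; this places all four relevant points in the common complex line $L_{12}$. Then by Lemma~\ref{classic}, this cross-ratio equals the classical cross-ratio $[\sigma(p_1),\sigma(p_2);\pi_{12}(p_4),\pi_{12}(p_3)]$, where $\sigma$ is inversion in the boundary circle of $L_{12}$. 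At this point the entire problem is governed by the reality and positivity properties of the classical cross-ratio on $\C\mathrm{P}^1$, which are recalled in the paragraph after \eqref{usual}: it is real if and only if the four points are concyclic or collinear, and positive precisely when the two pairs do not separate each other.

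First I would translate these classical conditions back into geometric statements inside $L_{12}\cong \mathrm{H}_\C^1$. The four points $\sigma(p_1),\sigma(p_2),\pi_{12}(p_3),\pi_{12}(p_4)$ lying on a common circle or line in $\C$ corresponds, after undoing the inversion $\sigma$, to $p_1,p_2,\pi_{12}(p_3),\pi_{12}(p_4)$ being equidistant from a common geodesic $\gamma$ in $L_{12}$ (a geodesic in the disk model is an arc of a circle orthogonal to the boundary, and curves at constant distance from a geodesic are the relevant circles). The positivity/non-separation condition translates into the two interior pairs $\{p_1,p_2\}$ and $\{\pi_{12}(p_3),\pi_{12}(p_4)\}$ lying on opposite sides of $\gamma$, which is exactly the first bullet of part~(1). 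I would handle the degenerate boundary case separately: when all four $p_i$ already lie on $\partial L_{12}$ no projection is needed, and the classical dichotomy between separating and non-separating pairs on the boundary circle directly yields the second bullet of part~(1) (positive, non-separating) and the whole of part~(2) (negative, separating). This is consistent with the earlier discussion following \eqref{crosstriple}, which already identified negative real values of $\X$ with boundary points on a common complex line whose pairs separate each other.

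The main obstacle I anticipate is the careful bookkeeping of which geometric configuration in $L_{12}$ corresponds to \emph{positive} versus \emph{negative} real values, particularly tracking how the inversion $\sigma$ and the transposition of the last two arguments in Lemma~\ref{classic} interact with the separation condition. One must verify that ``concyclic with non-separating pairs'' survives the inversion $\sigma$ to become ``equidistant from a geodesic with $p_1,p_2$ and the projected points on opposite sides,'' and that the sign of the cross-ratio is preserved correctly under these transformations. A subtle point is that $\sigma$ maps the interior points $p_1,p_2$ (lifts of norm $-1$) to points outside the closed disk, so the geometric picture on which the classical separation criterion is applied must be set up so that the reality locus (a generalized circle) and the two sides are identified consistently; the equidistant-locus interpretation is what makes the undoing of $\sigma$ transparent. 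Finally, I would note that whenever $\pi_{12}(p_3)$ or $\pi_{12}(p_4)$ coincides with $p_1$ or $p_2$, or lands on $\gamma$, the cross-ratio takes boundary values $0,1,\infty$ rather than an interior real number, so these edge cases do not affect the stated equivalences and can be excluded.
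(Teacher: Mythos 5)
Your proposal follows the paper's own proof essentially step for step: reduce to $L_{12}$ via Lemma~\ref{proj}, convert to the classical cross-ratio via Lemma~\ref{classic}, use reality $=$ concyclicity and sign $=$ (non)separation, and then read the common circle through the inverted points either as the unit circle (giving the boundary cases) or as a circle crossing it in two points, whose two arcs are hypercycles equidistant from the geodesic $\gamma$ joining those intersection points. The one step to state fully explicitly is the negative case: since $\sigma(p_1),\sigma(p_2)$ lie outside the open disk while $\pi_{12}(p_3),\pi_{12}(p_4)$ lie in the closed disk, the two pairs can only separate each other when the common circle is the unit circle itself, which is exactly the paper's argument that $\X<0$ forces all four points onto the boundary of $L_{12}$.
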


Note that the second statement with the $p_i$ on the boundary of ${\rm
  H}_\C^n$ is one half of Proposition 2 of \cite{KR} and of property 7
on p. 226 of \cite{G}. However the statement of the other half (so,
our first statement) is different when some of the $p_i$ are in ${\rm
  H}_\C^n$.
 
\Pf Normalize as in Lemma \ref{classic} so that the complex line
$L_{12}$ containing $p_1$ and $p_2$ is the first coordinate axis in
the ball model of $\HCn$. Denote by $z_1,\cdots,z_4$ the respective
coordinates in this unit disk of the points $p_1,p_2,\pi_{12}(p_3)$,
and $\pi_{12}(p_4)$. According to Lemma \ref{classic} and Proposition
\ref{Xreal},

\begin{eqnarray}{\bf X}(p_1,p_2,p_3,p_4) \in \R & \iff & [1/\overline{z_1},1/\overline{z_2};z_4,z_3]\in \R \nonumber\\
  &\iff & 1/\overline{z_1},1/\overline{z_2},z_4,z_3 \mbox{ lie on a
    common circle $C$ in $\C P^1$}\label{circle}
\end{eqnarray}
Note that $z_1,z_2,z_3,z_4$ are in the closed unit disk
of $\C$, so that $1/\overline{z_1}$ and $1/\overline{z_2}$ are outside
the open unit disk. In particular, either $C$ intersects the unit circle in 2
points $p$ and $q$, or $C$ is the unit circle. 

\begin{enumerate}
 \item  In view of \eqref{circle}, if $\X(p_1,p_2,p_3,p_4)<0$ the two pairs $(z_1,z_2)$ and $(1/\bar z_3,1/\bar z_4)$ separate each other 
on $C$. The latter remark tells us that this is only possible when $C$ is the unit circle. Therefore $p_1$, $p_2$, $p_3$ and $p_4$ 
all belong to the boundary of $L_{12}$ and the pairs $(p_1,p_2)$ and $(p_3,p_4)$ separate each other on $C$. 
\item  Assume that $\X(p_1,p_2,p_3,p_4)>0$. 
\begin{enumerate}
 \item If $C$ is the unit circle, then $z_i=1/\bar z_i$ for $i=1,2$ and thus $\X(p_1,p_2,p_3,p_4)=[z_1,z_2,z_4,z_3]$, which 
 is positive if and only if $\lbrace p_1,p_2\rbrace$ do not separate $\lbrace p_3,p_4 \rbrace$ in $C$.
 \item If $C$ intersects the unit circle in 2 points $p$ and $q$, let $\gamma$ denote the geodesic whose endpoints are $p$ and $q$. Then
$\pi_{12}(z_3)$ and $\pi_{12}(z_4)$ are on a hypercycle with endpoints
$p$ and $q$ (the part of $C$ which is inside the unit disk), and
$z_1$, $z_2$ are on the image of this hypercycle by reflection in
$\gamma$ (this is the image of the other half of $C$ by inversion in
the unit circle), see the left part of Figure 2. Therefore $p_1,p_2,\pi_{12}(p_3),\pi_{12}(p_4)$ are all equidistant from $\gamma$, with $ p_1,p_2$ on one side of
  $\gamma$ and $\pi_{12}(p_3),\pi_{12}(p_4)$ on the other. \EPf
\end{enumerate}
\end{enumerate}

\begin{figure}\label{eyes}
\begin{center}
 \scalebox{0.5}{\includegraphics{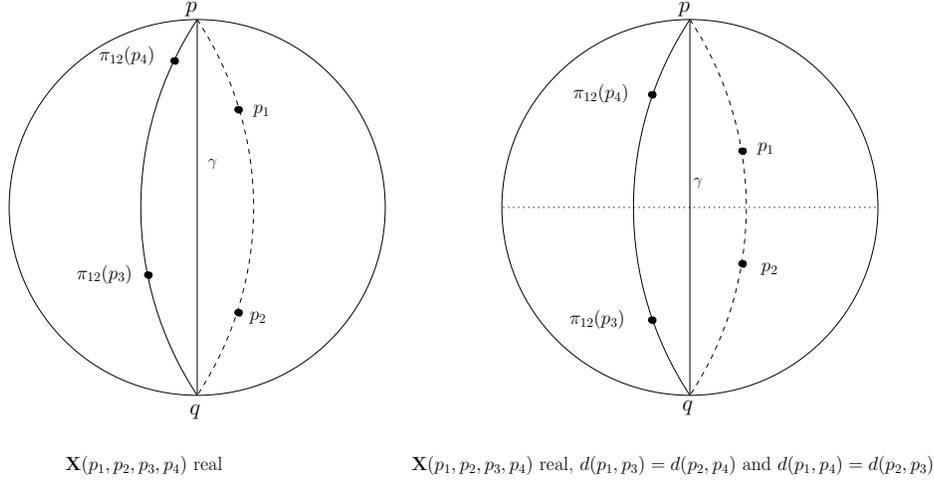}}
\caption{Projections of quadruples $(p_1,p_2,p_3,p_4)$ to $L_{12}$}
\end{center}
\end{figure}

The following result is the analogue of Theorem~\ref{gold} in the case
where the 4 points are inside ${\rm H}_\C^2$:
\begin{thm}\label{moregold} Let $p_1,p_2,p_3,p_4 \in {\rm
    H}_\C^2$ with $p_1 \neq p_2$ or $p_3 \neq p_4$. There exists a real reflection $\phi$ such that $\phi :
  p_1 \leftrightarrow p_2$ and $p_3 \leftrightarrow p_4$ if and only
  if: ${\bf X}(p_1,p_2,p_3,p_4)>0$, $d(p_1,p_3)=d(p_2,p_4)$ and
  $d(p_1,p_4)=d(p_2,p_3)$.
\end{thm}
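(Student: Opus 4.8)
The plan is to prove necessity directly and sufficiency via Brehm's classification (Theorem~\ref{Brehm}) combined with Lemma~\ref{refl2}. For necessity, suppose $\phi$ is a real reflection with $\phi:p_1\leftrightarrow p_2$ and $p_3\leftrightarrow p_4$. Since $\phi$ is an isometry, $d(p_1,p_3)=d(\phi(p_1),\phi(p_3))=d(p_2,p_4)$ and likewise $d(p_1,p_4)=d(p_2,p_3)$, giving the two distance conditions. Since $\phi$ is antiholomorphic it sends $\X$ to its conjugate, so $\overline{\X(p_1,p_2,p_3,p_4)}=\X(\phi(p_1),\phi(p_2),\phi(p_3),\phi(p_4))=\X(p_2,p_1,p_4,p_3)$; as the latter equals $\X(p_1,p_2,p_3,p_4)$ by the symmetry of the cross-ratio, $\X$ is real. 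It is nonzero because Hermitian products of lifts of interior points never vanish, and it cannot be negative: by Proposition~\ref{Xreal}(2) a negative real cross-ratio forces the four points onto the boundary of a common complex line, which is impossible since the $p_i$ lie inside $\HCd$. Hence $\X>0$.

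For sufficiency, normalize lifts so that $\la P_i,P_i\ra=-1$ and write $g_{ij}=\la P_i,P_j\ra$ (so $g_{ji}=\overline{g_{ij}}$). The aim is to produce an antiholomorphic isometry $g$ sending the triple $(p_1,p_3,p_4)$ to $(p_2,p_4,p_3)$ via Theorem~\ref{Brehm}. The three side-length hypotheses of Brehm's theorem for this pair of triples read $d(p_1,p_3)=d(p_2,p_4)$, $d(p_3,p_4)=d(p_4,p_3)$ and $d(p_4,p_1)=d(p_3,p_2)$, all of which hold by assumption (the middle one trivially). The remaining, antiholomorphic, hypothesis is $T(p_1,p_3,p_4)=\overline{T(p_2,p_4,p_3)}$.

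The crux is to deduce this triple-ratio identity from $\X>0$. A short computation shows that each triple-ratio has unit modulus and equals $w/\overline{w}$ for the associated product of pairings; writing out both sides, the identity $T(p_1,p_3,p_4)=\overline{T(p_2,p_4,p_3)}$ is equivalent to the single scalar condition $g_{13}\,g_{41}\,g_{24}\,g_{32}\in\R$. On the other hand, after multiplying numerator and denominator by the conjugate of the denominator, $\X(p_1,p_2,p_3,p_4)=g_{31}g_{42}/(g_{41}g_{32})$ is real exactly when that \emph{same} product $g_{13}g_{41}g_{24}g_{32}$ is real. Thus the reality of $\X$, which is guaranteed by $\X>0$, yields precisely Brehm's triple-ratio condition, and Theorem~\ref{Brehm} provides an antiholomorphic isometry $g$ with $g(p_1)=p_2$, $g(p_3)=p_4$, $g(p_4)=p_3$.

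Finally, $g$ exchanges $p_3$ and $p_4$; if $p_3\neq p_4$, Lemma~\ref{refl2} shows $g$ is a real reflection, hence an involution, so $g(p_2)=g(g(p_1))=p_1$ and $g$ exchanges both pairs as required. The degenerate case $p_3=p_4$ (which forces $p_1\neq p_2$ by hypothesis) is handled symmetrically, applying Brehm to the triple $(p_3,p_1,p_2)\mapsto(p_3,p_2,p_1)$ — whose distance hypotheses again follow from the assumptions — and invoking Lemma~\ref{refl2} on the genuinely exchanged pair $p_1\leftrightarrow p_2$. I expect the only real obstacle to be the cross-ratio/triple-ratio bookkeeping in the crux step; once the two reality conditions are seen to coincide, everything else is a direct application of the quoted results.
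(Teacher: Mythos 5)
Your proof is correct and follows essentially the same route as the paper: necessity from antiholomorphy of $\phi$ together with the exclusion of real negative cross-ratios for interior points, and sufficiency by feeding the distance hypotheses plus a cross-ratio/triple-ratio reality identity into Theorem~\ref{Brehm} and then upgrading the resulting antiholomorphic isometry to a real reflection (hence an involution) via Lemma~\ref{refl2}. The only cosmetic difference is your choice of Brehm triples $(p_1,p_3,p_4)\mapsto(p_2,p_4,p_3)$, exchanging $p_3,p_4$, instead of the paper's $(p_1,p_2,p_3)\mapsto(p_2,p_1,p_4)$, exchanging $p_1,p_2$; these are related by precisely the permutation $(13)(24)$ that the paper itself invokes for its degenerate case, and both reduce to the reality of the same product $\langle P_1,P_3\rangle\langle P_4,P_1\rangle\langle P_2,P_4\rangle\langle P_3,P_2\rangle$.
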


\Pf Let $p_1,p_2,p_3,p_4
\in {\rm H}_\C^2$ satisfy ${\bf X}(p_1,p_2,p_3,p_4)>0$,
$d(p_1,p_3)=d(p_2,p_4)$ and $d(p_1,p_4)=d(p_2,p_3)$. 
Consider the 2 ordered triples $(p_1,p_2,p_3)$
and $(p_2,p_1,p_4)$; these have equal side-lengths by our
assumptions. We now compare the corresponding triple-ratios,
$T_1=T(p_1,p_2,p_3)$ and $T_2=T(p_2,p_1,p_4)$. Observe that, taking lifts
$P_1,...,P_4$ of $p_1,...,p_4$ in $\C ^{2,1}$:

\begin{eqnarray}
  T_2 & = & \dfrac{\la P_2,P_1\ra\la P_1,P_4\ra\la P_4,P_2\ra}{\la P_1,P_2\ra\la P_4,P_1\ra\la P_2,P_4\ra}\nonumber\\
  & = & \dfrac{\la P_2,P_1\ra\la P_3,P_2\ra\la P_1,P_3\ra}{\la P_1,P_2\ra\la P_2,P_3\ra\la P_3,P_1\ra}\cdot 
  \dfrac{\la P_2,P_3\ra\la P_1,P_4\ra}{\la P_2,P_4\ra\la P_1,P_3\ra}\cdot
  \dfrac{\la P_3,P_1\ra\la P_4,P_2\ra}{\la P_3,P_2\ra\la P_4,P_1\ra}\nonumber\\
  & = & \overline{T_1}\cdot\dfrac{\X(p_1,p_2,p_3,p_4)}{\overline{\X(p_1,p_2,p_3,p_4)}}
\end{eqnarray}
Now by assumption ${\bf X}(p_1,p_2,p_3,p_4)>0$, so that $T_2=\overline{T_1}$.
Then,  by  Theorem \ref{Brehm}, there exists an antiholomorphic isometry $g$
sending $p_1$ to $p_2$, $p_2$ to $p_1$ and $p_3$ to $p_4$. But such an
isometry must be an $\R$-reflection by Lemma \ref{refl2}, assuming that $p_1 \neq p_2$. If $p_1=p_2$ but $p_3 \neq p_4$, the same argument applies, permuting the indices by the permutation $(13)(24)$.  \EPf

It may seem that we only used the assumption that $\X$ was real in the above proof. Recall however that if $\X$ is negative, the four points are
on the boundary of a complex line, and the two pairs $\{p_1,p_2\}$ and
$\{p_3,p_4\}$ separate each other. In that case, as observed above, there cannot exist a real 
reflection $\phi$ such that $\phi(p_1)=p_2$ and $\phi(p_3)=p_4$.

\section{Commutators, decomposable pairs and traces}

\subsection{Main results}

Recall that a pair of holomorphic isometries $(A,B) \in {\rm
  PU}(n,1)^2$ is said to be $\R$-\emph{decomposable} if there exist 3
$\R$-reflections $\sigma_1$, $\sigma_2$ and $\sigma_3$ such that
$A=\sigma_1 \sigma_2$ and $B=\sigma_1 \sigma_3$.  We are now ready to prove our main result.

\begin{thm}\label{decomp}
Let $A,B \in {\rm PU}(2,1)$ be two isometries not fixing a common point in $\overline{\HCd}$. Then: the pair $(A,B)$ is $\R$-decomposable if and only if the commutator $[A,B]$ has a fixed point in $\overline{\HCd}$ whose associated eigenvalue is real and positive.
\end{thm}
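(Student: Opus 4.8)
The plan is to prove the two implications separately, organizing everything around the \emph{4-cycle} attached to a fixed point $p_1$ of $[A,B]$,
\begin{equation}
p_1\overset{B^{-1}}{\longrightarrow}p_2\overset{A^{-1}}{\longrightarrow}p_3\overset{B}{\longrightarrow}p_4\overset{A}{\longrightarrow}p_1,
\end{equation}
together with the reduction (Lemma~\ref{exchange}) that a real reflection $\sigma$ decomposes both $A$ and $B$ if and only if $\sigma(p_1)=p_3$ and $\sigma(p_2)=p_4$. The direction of this reduction I actually need is quick: if $\sigma(p_1)=p_3$ and $\sigma(p_2)=p_4$, then since $A(p_4)=p_1$, $A(p_3)=p_2$ the antiholomorphic map $\sigma A$ exchanges $p_3$ and $p_4$, while $\sigma B$ exchanges $p_2$ and $p_3$; by Lemma~\ref{refl2} both are $\R$-reflections, so $A=\sigma(\sigma A)$ and $B=\sigma(\sigma B)$ are decomposed by the common reflection $\sigma$.

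For the forward implication, suppose $(A,B)$ is $\R$-decomposable. By Lemma~\ref{lemRdecprodanti} the commutator equals $\tau^2$ for an antiholomorphic isometry $\tau$, and by (the proof of) the reality criterion its commutator matrix is exactly $M\overline M$, where $M\in{\rm U}(2,1)$ is a lift of $\tau$ in the sense of Lemma~\ref{antihololift}. Since $\tau$ extends to a continuous self-map of the closed ball $\overline{\HCd}$, Brouwer's fixed point theorem yields a fixed point $p\in\overline{\HCd}$ of $\tau$; lifting $p$ to $v$ with $M\overline v=\lambda v$ gives $[A,B]\,v=M\overline M\,v=|\lambda|^2 v$. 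Thus $p$ is a fixed point of $[A,B]$ whose associated eigenvalue $|\lambda|^2$ is real and positive, as required.

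For the converse, assume $[A,B]$ fixes $p_1\in\overline{\HCd}$ with real positive eigenvalue $\lambda_1$ and form the 4-cycle. Fixing lifts $\tilde A,\tilde B\in{\rm SU}(2,1)$ and $P_1$ with $\tilde A\tilde B\tilde A^{-1}\tilde B^{-1}P_1=\lambda_1P_1$, and setting $P_2=\tilde B^{-1}P_1$, $P_3=\tilde A^{-1}P_2$, $P_4=\tilde BP_3$, a direct computation using only the ${\rm SU}(2,1)$-invariance of $\la\cdot,\cdot\ra$ (apply $\tilde A$ to reveal the commutator in $\la P_3,P_4\ra$, and $\tilde B$ to get $\la P_3,P_2\ra=\la P_4,P_1\ra$) gives the identity, which is relation~\eqref{linkcycleeigenval},
\begin{equation}
\X(p_2,p_4,p_1,p_3)=\frac{\overline{\lambda_1}\,|\la P_1,P_2\ra|^2}{|\la P_4,P_1\ra|^2},
\end{equation}
so that $\lambda_1\,\X(p_2,p_4,p_1,p_3)=|\lambda_1|^2|\la P_1,P_2\ra|^2/|\la P_4,P_1\ra|^2>0$ always, whence $\lambda_1>0$ forces $\X(p_2,p_4,p_1,p_3)>0$. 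Moreover the cycle relations and the fact that $A,B$ are isometries give the side-length conditions $d(p_1,p_2)=d(p_3,p_4)$ and $d(p_1,p_4)=d(p_2,p_3)$ for free (apply $A$ to $(p_3,p_4)$ and $B$ to $(p_2,p_3)$). Hence Theorem~\ref{moregold} (if the four points lie in $\HCd$) or Theorem~\ref{gold} (if they lie on $\partial\HCd$) produces a real reflection $\sigma$ with $\sigma:p_1\leftrightarrow p_3$ and $p_2\leftrightarrow p_4$, which by the reduction decomposes both $A$ and $B$.

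I expect the identity for $\X(p_2,p_4,p_1,p_3)$ to be the crux: it is what converts the algebraic datum (reality and sign of the eigenvalue) into the geometric cross-ratio condition, and it demands careful bookkeeping of lifts and of the conjugate-linearity of $\la\cdot,\cdot\ra$. The remaining delicate point is degeneracy: Theorems~\ref{moregold} and~\ref{gold} require the four points to be suitably distinct and the inner products $\la P_1,P_2\ra$, $\la P_4,P_1\ra$ to be nonzero, and one must use the standing hypothesis that $A$ and $B$ share no fixed point in $\overline{\HCd}$ to exclude the collapsed configurations (for instance $p_1=p_2$ would force $B$ to fix $p_1$, and a fuller collapse of the cycle would force a common fixed point). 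The genuinely different behaviour when $\lambda_1<0$, and the excluded case of a common fixed point, are precisely what is deferred to the later sections.
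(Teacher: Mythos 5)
Your generic argument is essentially the paper's own. For the converse direction you use the same 4-cycle, the same key identity (your formula is equivalent to relation \eqref{linkcycleeigenval}, since $\overline{\lambda_1}/|\lambda_1|^2=\lambda_1^{-1}$ and $\la P_4,P_1\ra=\la P_3,P_2\ra$), the same appeal to Theorems \ref{gold} and \ref{moregold}, and the same reduction as Lemma \ref{exchange}. Your forward direction is a genuine (and clean) variant: the paper gets it by running the chain of equivalences backwards through Lemma \ref{exchange}, whereas you apply Brouwer's theorem to the antiholomorphic square root $\tau$ of $[A,B]$ furnished by Lemma \ref{lemRdecprodanti} and compute directly that the lift $M\overline{M}$ has eigenvalue $|\lambda|^2>0$ at a lift of a fixed point of $\tau$. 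This is correct ($M\overline{M}$ and the canonical commutator matrix both have determinant $1$ and differ by a positive real scalar, hence coincide), and it even sidesteps a point the paper leaves implicit, namely whether the reflection $\sigma_1$ exchanges $p_1$ and $p_3$ for an \emph{arbitrary} fixed point $p_1$ of $\tau^2$ rather than for a well-chosen one.

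The genuine gap is your treatment of degenerate cycles, which you need for the converse. You claim the hypothesis that $A$ and $B$ have no common fixed point in $\overline{\HCd}$ ``excludes the collapsed configurations''. It does not. First, $p_1=p_2$ only means that $B$ fixes $p_1$, which is perfectly allowed (only a \emph{common} fixed point is excluded); when moreover $p_1\in\partial\HCd$ one gets $\la P_1,P_2\ra=\la P_3,P_4\ra=0$, hence $\X(p_2,p_4,p_1,p_3)=0$, and the cross-ratio mechanism breaks down entirely (this really occurs under the hypotheses of the converse, e.g.\ when $B$ is loxodromic with fixed points $p_1$ and $A^{-1}p_1$). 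Second --- and this is the case to which the paper devotes its final paragraph --- the diagonal collapse $p_1=p_3$, $p_2=p_4$ means that $A$ and $B$ both exchange two points $p\neq q$; if $p,q\in\HCd$ they share the midpoint of $[pq]$ and the hypothesis applies, but if $p,q\in\partial\HCd$ nothing is contradicted: $A$ and $B$ then have fixed points on the geodesic $(pq)$, not necessarily the same ones. In that configuration Theorem \ref{gold} is unavailable (the four points are not pairwise distinct), and your reduction also fails, since $\sigma A$ then \emph{fixes} a boundary point rather than exchanging two distinct points, so Lemma \ref{refl2} cannot be invoked (an antiholomorphic isometry fixing a point need not be an involution). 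One must instead verify both sides of the equivalence directly, as the paper does: any real reflection whose mirror contains the geodesic $(pq)$ decomposes both $A$ and $B$ by Proposition \ref{decompCrefllox}, and $[A,B]$ is loxodromic with all eigenvalues real and positive. Without this case analysis (and the analogous one for $p_1=p_2\in\partial\HCd$), your proof of the converse is incomplete.
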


Note that the eigenvalues of elements of ${\rm PU}(2,1)$ are not well-defined (up to change of lift in ${\rm U}(2,1)$, or even ${\rm SU}(2,1)$), but the eigenvalues of a commutator are well-defined (the commutator itself is independent of lifts). Using Goldman's classification of isometries by trace
and Proposition~\ref{eigenvals}, this criterion can be reduced to the
following:

\begin{thm}\label{decomp2} Let $A,B \in {\rm PU}(2,1)$ be two isometries not fixing a common point in $\overline{\HCd}$. Then $(A,B)$ is
  $\R$-decomposable if and only if:
\begin{itemize}
\item $[A,B]$ is loxodromic and ${\rm Tr} [A,B]>3$, or

\item $[A,B]$ is unipotent, or
 
\item $[A,B]$ is elliptic, ${\rm Tr} [A,B]\in \R$ and the eigenvalue 1
  of $[A,B]$ is of negative type.
\end{itemize}
\end{thm}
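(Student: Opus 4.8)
The plan is to prove Theorem~\ref{decomp2} as a corollary of Theorem~\ref{decomp} by translating the eigenvalue condition into the trace/isometry-type language, so I will first establish Theorem~\ref{decomp} and then read off Theorem~\ref{decomp2}.

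\textbf{Setting up the 4-cycle.} Let $p_1 \in \overline{\HCd}$ be a fixed point of the commutator $[A,B]=ABA^{-1}B^{-1}$, and form the cycle of points in \eqref{intro-4-cycle} by setting $p_2=B^{-1}(p_1)$, $p_3=A^{-1}(p_2)$, $p_4=B(p_3)$. Since $[A,B](p_1)=p_1$, one checks $A(p_4)=p_1$, so the four points genuinely close up into a cycle under the maps $B^{-1},A^{-1},B,A$. The first key step is Lemma~\ref{exchange}, which I would prove by direct verification: a real reflection $\sigma$ decomposes both $A$ and $B$ (with a common reflection $\sigma=\sigma_2$ in the notation $A=\sigma_1\sigma_2$, $B=\sigma_2\sigma_3$) precisely when $\sigma(p_1)=p_3$ and $\sigma(p_2)=p_4$. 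The reason is that the relations $A=\sigma_1\sigma_2$ and $B=\sigma_2\sigma_3$ force $\sigma_2$ to conjugate $A$ to $A^{-1}$ and $B$ to $B^{-1}$ in a compatible way, which geometrically means $\sigma_2$ must swap $p_1 \leftrightarrow p_3$ and $p_2 \leftrightarrow p_4$; conversely such a $\sigma$ allows one to recover $\sigma_1=A\sigma$ and $\sigma_3=\sigma B$ as real reflections via Lemma~\ref{refl2}. Thus $\R$-decomposability of $(A,B)$ is equivalent to the existence of a real reflection exchanging the pairs $(p_1,p_3)$ and $(p_2,p_4)$.

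\textbf{Applying the cross-ratio criterion.} The existence of such a $\sigma$ is exactly the conclusion of Theorem~\ref{moregold} (if the points lie in $\HCd$) or Theorem~\ref{gold} (if they lie in $\partial\HCd$). In both cases the decisive analytic condition is that $\X(p_2,p_4,p_1,p_3)$ be real and positive; the metric side-length conditions $d(p_2,p_1)=d(p_4,p_3)$ and $d(p_2,p_3)=d(p_4,p_1)$ of Theorem~\ref{moregold} are automatic here because $A$ and $B$ are isometries carrying one pair of points to the other along the cycle. So the whole problem reduces to showing: the cross-ratio $\X(p_2,p_4,p_1,p_3)$ is real and positive if and only if $[A,B]$ has a fixed point with real positive eigenvalue. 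The final and most important ingredient is relation~\eqref{linkcycleeigenval}, which asserts that the product $\lambda_1 \cdot \X(p_2,p_4,p_1,p_3)$ is always real and positive, where $\lambda_1$ is the eigenvalue of (a chosen lift of) $[A,B]$ at $p_1$. I would prove this by taking explicit lifts $P_i \in \C^{2,1}$ of the $p_i$, writing the lifts $P_2,P_3,P_4$ as images of $P_1$ under the chosen lifts of $B^{-1},A^{-1},B$ in $\mathrm{SU}(2,1)$, and expanding the four Hermitian products in the definition \eqref{KRdef} of $\X$; the isometry relations $\la AX,AY\ra=\la X,Y\ra$ collapse the products so that the eigenvalue $\lambda_1$ appears explicitly as a factor, and the remaining quantity is manifestly a positive real (a ratio of a complex number against its own conjugate times a positive modulus). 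Granting \eqref{linkcycleeigenval}, $\X$ is real and positive exactly when $\lambda_1$ is, completing Theorem~\ref{decomp}.

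\textbf{From eigenvalues to traces.} To deduce Theorem~\ref{decomp2}, I combine Theorem~\ref{decomp} with Goldman's trace classification and Proposition~\ref{eigenvals}. A real positive eigenvalue of $[A,B]$ at a fixed point in $\overline{\HCd}$ forces $\Tr[A,B]\in\R$ (the characteristic polynomial \eqref{charpoly} then has real coefficients, since $\Tr[A,B]=\overline{\Tr[A,B]}$ once a real eigenvalue is present and the spectrum is conjugation-stable). I then go through the three isometry types of $[A,B]$. If $[A,B]$ is loxodromic with real trace, Proposition~\ref{eigenvals} gives eigenvalues $\{1,r,1/r\}$ with $r\in\R\setminus[-1,1]$, and the negative-type eigenvalue (the boundary fixed points) is real and positive precisely when $r>0$, i.e.\ $r>1$, which by \eqref{goldpoly} corresponds to $\Tr[A,B]>3$. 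If $[A,B]$ is elliptic with real trace, the eigenvalues are $\{1,e^{i\theta},e^{-i\theta}\}$ and the only real eigenvalue is $1$; the associated fixed point lies in $\overline{\HCd}$ with positive eigenvalue exactly when the eigenvalue $1$ is of negative type, giving the third bullet. If $[A,B]$ is parabolic, Proposition~\ref{eigenvals} gives eigenvalues $\{1,1,1\}$ or $\{1,-1,-1\}$; the unipotent case has the positive eigenvalue at its boundary fixed point, whereas the $\{1,-1,-1\}$ case is exactly the excluded negative situation (handled separately in Section~\ref{section-maximal}). The main obstacle I anticipate is the clean verification of relation~\eqref{linkcycleeigenval}, since one must choose lifts carefully and keep track of how the eigenvalue normalization interacts with the $\mathrm{SU}(2,1)$ versus $\mathrm{U}(2,1)$ lift ambiguity; once that identity is in hand, both theorems follow by bookkeeping.
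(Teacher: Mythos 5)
Your proposal follows the paper's own proof essentially step for step: the 4-cycle, the equivalence of Lemma~\ref{exchange}, the identity \eqref{linkcycleeigenval} relating $\lambda_1$ to $\X(p_2,p_4,p_1,p_3)$, the appeal to Theorems~\ref{gold} and \ref{moregold} (with the side-length conditions automatic because $A,B$ are isometries), and the final translation into trace language via Goldman's classification and Proposition~\ref{eigenvals} are exactly the paper's ingredients; your bookkeeping in the three cases (loxodromic: $r>1 \Leftrightarrow \Tr[A,B]>3$; elliptic: eigenvalue $1$ of negative type; parabolic: unipotent versus the spectrum $\{1,-1,-1\}$, which is the negative case) is correct, and indeed the paper itself presents Theorem~\ref{decomp2} as precisely this reduction of Theorem~\ref{decomp}.

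The one genuine gap is that you never address degenerate 4-cycles, i.e.\ configurations where some of the points $p_1,\dots,p_4$ coincide. Theorem~\ref{gold} is stated for four \emph{distinct} boundary points, and Theorem~\ref{moregold} requires (in the indexing of your application) $p_2\neq p_4$ or $p_1\neq p_3$; so your reduction of decomposability to positivity of $\X(p_2,p_4,p_1,p_3)$ breaks down exactly when $p_1=p_3$ and $p_2=p_4$. This case is not vacuous under the hypotheses: it means $A$ and $B$ both exchange two points $p,q\in\partial\HCd$, acting as half-turns on the complex line they span, with $[A,B]$ loxodromic along the geodesic $(pq)$. The paper devotes the final paragraph of the proof of Theorem~\ref{decomp} to this configuration, verifying both sides of the equivalence directly: any real plane containing $(pq)$ decomposes $A$ and $B$ (via Proposition~\ref{decompCrefllox}), and $[A,B]$ has all eigenvalues real and positive. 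Your argument needs this supplementary case (together with the observation that the remaining partial coincidences leave the cross-ratio and the lemmas valid) in order to be complete.
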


The extra assumption in the elliptic case means that the eigenvalue 1
corresponds to the fixed point of $[A,B]$. The other eigenvalues of
$[A,B]$ are then $e^{\pm i\theta}$ for some $\theta$, by the
assumption that ${\rm Tr} [A,B]\in \R$ and
Proposition~\ref{eigenvals}.

\begin{figure}
 \begin{center}
  \scalebox{0.5}{\includegraphics{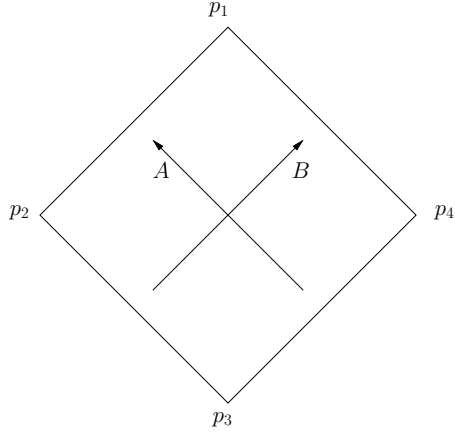}}
\caption{The 4-cycle associated to a fixed point of $[A,B]$\label{4cycle}}
 \end{center}
\end{figure}

\emph{Proof of Theorem~\ref{decomp}:} Let $p_1$ be a fixed point of
$[A,B]$ in $\overline{{\rm H}_\C^2}$, $P_1$ a lift of $p_1$ in
$\C^{2,1}$ and $\lambda_1$ the associated eigenvalue, so that:
$[A,B]P_1=\lambda_1P_1$. Consider the cycle of
four points defined as follows: let $P_2=B^{-1}P_1$, $P_3=A^{-1}P_2$ and
$P_4=B(P_3)$. First assume for simplicity that these 4 points are all distinct. Then opposite sides of the quadrilateral $(P_1P_2P_3P_4)$ are identified
by $A$ and $B$ as on Figure \ref{4cycle}. Note that
$AP_4=[A,B]P_1=\lambda_1P_1$. Then:
\begin{equation}\label{linkcycleeigenval}
{\bf X}(p_2,p_4,p_1,p_3)=\frac{\langle P_1,P_2 \rangle \langle P_3,P_4
  \rangle}{\langle P_3,P_2 \rangle \langle P_1,P_4
  \rangle}=\frac{\langle \lambda_1^{-1}AP_4,AP_3 \rangle \langle
  P_3,P_4 \rangle}{\langle P_3,P_2 \rangle \langle BP_2,BP_3
  \rangle}=\lambda_1^{-1}\frac{|\langle P_3,P_4 \rangle|^2}{|\langle
  P_3,P_2 \rangle|^2}.
\end{equation}
This proves the following:

\begin{lem}\label{positive} The cross ratio ${\bf X}(p_2,p_4,p_1,p_3)$ is real and positive if and only if $\lambda_1 \in \R^+$
\end{lem}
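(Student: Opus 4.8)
The plan is to deduce the equivalence directly from relation~\eqref{linkcycleeigenval}, which has just been established and which I may take as given. That identity writes the cross-ratio as $\lambda_1^{-1}$ times the scalar $c=|\langle P_3,P_4\rangle|^2/|\langle P_3,P_2\rangle|^2$, so the whole content of the statement reduces to the observation that $c$ is a strictly positive real number. Granting $c>0$, the number ${\bf X}(p_2,p_4,p_1,p_3)=\lambda_1^{-1}c$ is $\lambda_1^{-1}$ multiplied by a positive real; since a nonzero complex number $z$ lies in $\R^+$ exactly when $z^{-1}$ does, and since multiplication by a positive real preserves membership in $\R^+$, I conclude $\lambda_1\in\R^+ \iff \lambda_1^{-1}\in\R^+ \iff {\bf X}(p_2,p_4,p_1,p_3)\in\R^+$, which is the assertion.

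First I would check that $c>0$, equivalently that the Hermitian products $\langle P_3,P_2\rangle$ and $\langle P_3,P_4\rangle$ are nonzero. This is where the geometry of the form of signature $(2,1)$ enters, and it is the only non-formal point. If $v$ is a negative vector then $v^{\perp}$ is positive definite, so no point of $\overline{\HCd}$ is orthogonal to an interior point; and two distinct null vectors cannot be orthogonal, since an isotropic $2$-plane would contradict the fact that the Witt index is $1$. Hence the lifts of any two distinct points of $\overline{\HCd}$ have nonzero Hermitian product, which makes $c$ a well-defined strictly positive real under the standing assumption that $p_1,p_2,p_3,p_4$ are pairwise distinct.

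The genuine obstacle is precisely that standing assumption: relation~\eqref{linkcycleeigenval} was derived under the simplifying hypothesis that the four points are pairwise distinct, and the degenerate configurations must still be addressed. When some of the $p_i$ coincide I would first confirm that ${\bf X}(p_2,p_4,p_1,p_3)$ remains defined (the cross-ratio tolerates coincidences as long as no more than two of the points collapse on $\partial\HCd$), and then re-examine the substitutions $P_1=\lambda_1^{-1}AP_4$, $P_2=AP_3$, $P_1=BP_2$, $P_4=BP_3$ underlying the identity to make sure the same factorisation as $\lambda_1^{-1}$ times a positive real survives. I would organise the analysis by which of $A,B$ fixes $p_1$ (for instance $p_1=p_2$ exactly when $B$ fixes $p_1$), invoking the hypothesis that $A$ and $B$ have no common fixed point in $\overline{\HCd}$ to discard the worst collapses; in each remaining case the scalar factor stays strictly positive by the same signature argument, so the equivalence $\lambda_1\in\R^+\iff{\bf X}(p_2,p_4,p_1,p_3)\in\R^+$ continues to hold.
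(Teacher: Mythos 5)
Your proof is correct and is essentially the paper's own argument: the lemma is read off directly from relation~\eqref{linkcycleeigenval}, which exhibits ${\bf X}(p_2,p_4,p_1,p_3)$ as $\lambda_1^{-1}$ times the positive real scalar $|\langle P_3,P_4\rangle|^2/|\langle P_3,P_2\rangle|^2$. Your signature argument for the non-vanishing of the Hermitian products, and your deferral of the coincident-point configurations, simply make explicit what the paper leaves implicit (the degenerate cases are handled at the end of the proof of Theorem~\ref{decomp}, outside the lemma itself).
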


For the next step we use Theorem~\ref{gold} or \ref{moregold},
depending on whether $p_1$ (and hence all other $p_i$'s) is on $\partial
{\rm H}_\C^2$ or in ${\rm H}_\C^2$. In the latter case, by construction
of the 4 points we have $d(p_1,p_2)=d(p_3,p_4)$ and
$d(p_1,p_4)=d(p_2,p_3)$. Therefore Theorem~\ref{moregold} tells us:

\begin{lem}\label{Lem-cross-pos} There exists a real reflection $\phi$ such that 
$\phi (p_1)=p_3$ and $\phi(p_2) = p_4$ if and only if the cross-ratio ${\bf
    X}(p_2,p_4,p_1,p_3)$ is real and positive.
\end{lem}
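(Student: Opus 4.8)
The plan is to deduce this directly from Theorem~\ref{moregold}, applied to the pairing we actually want, namely $(p_1,p_3)$ and $(p_2,p_4)$ rather than $(p_1,p_2)$ and $(p_3,p_4)$. Concretely, I would invoke Theorem~\ref{moregold} with the four points listed in the order $(p_1,p_3,p_2,p_4)$: a real reflection exchanging the first two entries and the last two entries is then exactly a $\phi$ with $\phi(p_1)=p_3$ and $\phi(p_2)=p_4$ (recall $\phi$ is an involution, so $\phi(p_1)=p_3$ forces $\phi(p_3)=p_1$). This reduces the lemma to checking the three hypotheses of Theorem~\ref{moregold} in this reordering: the two distance equalities $d(p_1,p_2)=d(p_3,p_4)$ and $d(p_1,p_4)=d(p_2,p_3)$, together with the positivity of $\X(p_1,p_3,p_2,p_4)$.

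The two distance equalities come for free from the construction of the cycle. Since $P_2=B^{-1}P_1$ and $P_4=BP_3$, the isometry $B$ sends $p_2\mapsto p_1$ and $p_3\mapsto p_4$, whence $d(p_1,p_4)=d(p_2,p_3)$. Likewise $P_3=A^{-1}P_2$ and $AP_4=\lambda_1 P_1$ show that $A$ sends $p_3\mapsto p_2$ and $p_4\mapsto p_1$, whence $d(p_1,p_2)=d(p_3,p_4)$. These are precisely the two metric conditions required by Theorem~\ref{moregold} for the ordering $(p_1,p_3,p_2,p_4)$.

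It then remains only to connect the positivity of $\X(p_1,p_3,p_2,p_4)$ to that of $\X(p_2,p_4,p_1,p_3)$. Applying the Hermitian symmetry $\la P_i,P_j\ra=\overline{\la P_j,P_i\ra}$ termwise in the definition \eqref{KRdef}, a direct computation gives $\X(p_1,p_3,p_2,p_4)=\overline{\X(p_2,p_4,p_1,p_3)}$. Since a complex number is real and positive if and only if its conjugate is, we conclude that $\X(p_1,p_3,p_2,p_4)>0$ exactly when $\X(p_2,p_4,p_1,p_3)>0$. Combining this equivalence with the two distance equalities and Theorem~\ref{moregold} yields the stated result.

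The only real care needed is the index bookkeeping: matching the desired pairing $\phi(p_1)=p_3,\ \phi(p_2)=p_4$ against the fixed template of Theorem~\ref{moregold} (which exchanges entries $1\leftrightarrow 2$ and $3\leftrightarrow 4$), and then correctly tracking which distance equalities and which cross-ratio ordering result. I expect this relabeling, together with the conjugation identity $\X(p_1,p_3,p_2,p_4)=\overline{\X(p_2,p_4,p_1,p_3)}$, to be the main (if modest) obstacle; once it is set up properly the entire geometric content is supplied by Theorem~\ref{moregold}. Throughout I assume, as in the surrounding discussion, that the four points are distinct; if two of them coincide the weaker nondegeneracy hypothesis ($p_1\neq p_3$ or $p_2\neq p_4$) of Theorem~\ref{moregold} still covers the argument.
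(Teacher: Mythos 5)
Your reduction is correct as far as it goes, and it is essentially the paper's own argument: the paper likewise feeds the cycle identities $d(p_1,p_2)=d(p_3,p_4)$ and $d(p_1,p_4)=d(p_2,p_3)$ into Theorem~\ref{moregold}, except that it plugs the points in directly in the order $(p_2,p_4,p_1,p_3)$, so that the conclusion of that theorem is literally the statement of the lemma and no conjugation identity is needed. Your ordering $(p_1,p_3,p_2,p_4)$, together with the (correct) identity $\X(p_1,p_3,p_2,p_4)=\overline{\X(p_2,p_4,p_1,p_3)}$, achieves the same thing with slightly more bookkeeping; this is a cosmetic difference, not a different route.

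There is, however, a genuine gap: Theorem~\ref{moregold} is stated only for points lying \emph{inside} ${\rm H}_\C^2$ --- its hypotheses and its proof (via Brehm's theorem on triangles) involve the distances $d(p_i,p_j)$, which are meaningless for boundary points. But Lemma~\ref{Lem-cross-pos} is invoked for an arbitrary fixed point $p_1$ of $[A,B]$ in $\overline{\HCd}$; when $[A,B]$ is loxodromic or parabolic --- two of the three cases of Theorem~\ref{decomp2} --- the point $p_1$, and hence the entire 4-cycle, lies on $\partial\HCd$, and your argument does not apply there. The paper handles this by splitting into two cases: Goldman's Theorem~\ref{gold} when the points are on $\partial\HCd$ (where no distance conditions appear, but the four points must be pairwise distinct), and Theorem~\ref{moregold} when they are in ${\rm H}_\C^2$. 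Your proof becomes complete once you add the boundary case, applying Theorem~\ref{gold} with the same index bookkeeping you already set up.
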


The following lemma concludes the proof of Theorem~\ref{decomp}:
\begin{lem}\label{exchange} $(A,B)$ is $\R$-decomposable $\iff$ there exists a real
  reflection $\phi$ such that $\phi : p_1 \leftrightarrow p_3$ and
  $p_2 \leftrightarrow p_4$.
\end{lem}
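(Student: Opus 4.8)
The plan is to establish the logical equivalence in Lemma~\ref{exchange} by unwinding both sides into statements about the 4-cycle and then matching them via the defining relations $P_2=B^{-1}P_1$, $P_3=A^{-1}P_2$, $P_4=BP_3$, together with $AP_4=\lambda_1 P_1$. First I would prove the forward implication. Suppose $(A,B)$ is $\R$-decomposable, so there exist real reflections $\sigma_1,\sigma_2,\sigma_3$ with $A=\sigma_1\sigma_2$ and $B=\sigma_1\sigma_3$. The natural candidate for the reflection $\phi$ exchanging $p_1\leftrightarrow p_3$ and $p_2\leftrightarrow p_4$ should be one of the $\sigma_i$ (I expect $\sigma_1$, the common reflection). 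The idea is to verify directly, using $A=\sigma_1\sigma_2$ and $B=\sigma_1\sigma_3$ and the cycle relations, that $\sigma_1(p_1)=p_3$ and $\sigma_1(p_2)=p_4$. For instance, since $p_3=A^{-1}B^{-1}p_1=\sigma_2\sigma_1\sigma_3\sigma_1 p_1$, one computes $\sigma_1 p_1$ and chases the relation through; the involutive nature $\sigma_i^2=\mathrm{id}$ is what makes the bookkeeping close up.

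For the converse, suppose such a $\phi$ exists with $\phi(p_1)=p_3$, $\phi(p_3)=p_1$, $\phi(p_2)=p_4$, $\phi(p_4)=p_2$. I would then build the decomposing reflections explicitly from $\phi$ and the maps $A,B$. The key observation is that a real reflection $\sigma$ decomposes $A$ precisely when $\sigma A$ (equivalently $A\sigma$) is again antiholomorphic and an involution, and by Lemma~\ref{refl2} it suffices to exhibit two points that $\sigma A$ exchanges. The plan is to set $\phi$ as the common reflection $\sigma_1$ and define $\sigma_2=\phi A$ (as an antiholomorphic isometry) and $\sigma_3=\phi B$, then check each is a genuine real reflection. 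To see that $\phi A$ is a real reflection, I would use that $\phi$ swaps $p_2\leftrightarrow p_4$ together with the relation $AP_4=\lambda_1 P_1$ and $A^{-1}p_2=p_3$ to exhibit a pair of points exchanged by $\phi A$, invoking Lemma~\ref{refl2}; the analogous computation handles $\phi B$ via the swap $p_1\leftrightarrow p_3$ and $B^{-1}p_1=p_2$.

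The main obstacle I anticipate is the converse direction: merely knowing $\phi$ exchanges the two diagonal pairs does not immediately force $\phi A$ and $\phi B$ to be \emph{involutions}, which is what Lemma~\ref{refl2} requires. The delicate point is confirming that the antiholomorphic map $\phi A$ genuinely swaps two points (rather than just sending one to another), and this is where the precise pairing $p_1\leftrightarrow p_3$, $p_2\leftrightarrow p_4$ (as opposed to some other matching) is essential. I would need to verify that $\phi A$ maps $p_4\mapsto p_2$ and $p_2\mapsto p_4$: the first follows from $A(p_4)=p_1$ (up to the eigenvalue, which is projectively irrelevant) and $\phi(p_1)=p_3$—so I must be careful here, as this gives $\phi A(p_4)=p_3$, not $p_2$, meaning the correct decomposing reflection and the correct exchanged pair must be identified by tracking the cycle orientation precisely. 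Reconciling exactly which composition yields an involution, and on which pair of points, is the crux; once the right pairing is pinned down, Lemma~\ref{refl2} finishes each case and the equivalence follows.
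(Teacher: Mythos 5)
You follow the same route as the paper (forward direction: the common reflection itself is the required $\phi$; converse: apply Lemma~\ref{refl2} to compositions of $\phi$ with $A$ and $B$), but the converse --- which you yourself call the crux --- is left unresolved in your write-up, and it stalls precisely because you aim at the wrong pairs. The compositions of $\phi$ with $A$ and $B$ never exchange the diagonal pairs $\{p_2,p_4\}$ or $\{p_1,p_3\}$: those are exchanged by $\phi$ alone. They exchange \emph{edge} pairs of the $4$-cycle. Using $A(p_3)=p_2$, $A(p_4)=p_1$ (projectively), $B(p_3)=p_4$, $B(p_2)=p_1$, the antiholomorphic isometry $A\circ\phi$ sends $p_1\mapsto A(p_3)=p_2$ and $p_2\mapsto A(p_4)=p_1$, hence exchanges $p_1$ and $p_2$; similarly $B\circ\phi$ exchanges $p_1$ and $p_4$. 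Lemma~\ref{refl2} then says both are real reflections, and since $\phi^2=\mathrm{id}$ we get $A=(A\phi)\,\phi$ and $B=(B\phi)\,\phi$, i.e.\ $(A,B)$ is $\R$-decomposable with common reflection $\phi$. (With your ordering $\phi\circ A$ the exchanged pair is $\{p_3,p_4\}$: you correctly computed $\phi A(p_4)=p_3$ but stopped one step short of $\phi A(p_3)=\phi(p_2)=p_4$.) Note also that your announced ``main obstacle'' --- that Lemma~\ref{refl2} requires involutivity --- is a misreading: involutivity is that lemma's \emph{conclusion}, not its hypothesis. Exhibiting one exchanged pair is all that is needed, and your proposal never actually exhibits one.

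In the forward direction, your claim that $\sigma_1(p_1)=p_3$ and $\sigma_1(p_2)=p_4$ follow by formal bookkeeping from the cycle relations and $\sigma_i^2=\mathrm{id}$ is not correct: no such derivation exists. Unwinding the relations (in your convention $A=\sigma_1\sigma_2$, $B=\sigma_1\sigma_3$), both identities are equivalent to the single statement $\psi(p_1)=p_1$, where $\psi=\sigma_1\sigma_2\sigma_1\sigma_3\sigma_1$ is the antiholomorphic isometry with $\psi^2=[A,B]$ (cf.\ Lemma~\ref{lemRdecprodanti}). That is a geometric statement about the particular fixed point $p_1$, not an identity in the group generated by the $\sigma_i$: one must argue that $\psi$, which only a priori permutes ${\rm Fix}([A,B])\cap\overline{\HCd}$, actually fixes $p_1$. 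This is automatic when that fixed set is a single point; when $[A,B]$ is loxodromic it needs Lemma~\ref{refl2} to rule out $\psi$ transposing the two fixed points (a transposition would make $\psi$ an involution, forcing $[A,B]=\psi^2=\mathrm{id}$, excluded since $A$ and $B$ share no fixed point); and the remaining conceivable case, where $[A,B]$ is a complex reflection with a disk of fixed points, must be excluded by a separate argument (a nontrivial complex reflection is never the square of an antiholomorphic isometry). The paper asserts this implication without comment as well, but your proposal replaces a genuinely needed geometric argument with an algebraic manipulation that does not close up.
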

Indeed, if $A=\sigma_2 \sigma_1$ and $B=\sigma_3 \sigma_1$ then
$\sigma_1=\phi$ satisfies $\phi : p_1 \leftrightarrow p_3$ and $p_2
\leftrightarrow p_4$. Conversely, if such a $\phi$ exists then by Lemma~\ref{refl2} above, 
$A\circ \phi$ and $B \circ \phi$ are real reflections. Indeed they are both antiholomorphic, and 
$A\circ \phi$ (resp. $B\circ\phi$) exchanges $p_1$ and $p_2$ (resp. $p_1$ and $p_4$). Therefore $(A,B)$ 
is $\R$-decomposable. \EPf

Finally we examine the case where some of the 4 points $p_1,...,p_4$ are equal. The cross-ratio appearing in Lemma~4.1 is well-defined as long as no three of the points are on $\partial \HCd$ and  equal, in which case all four of them would be equal, contradicting the assumption that $A$ and $B$ do not have a common fixed point. The proofs of Lemmas~4.2 and 4.3 carry through as long as $p_1 \neq p_3$ or $p_2 \neq p_4$. 

Now if $p_1 = p_3=p$ and  $p_2 = p_4=q$, then $A$ and $B$ both exchange $p$ and $q$ (if they are distinct). If $p$ and $q$ are in $\HCd$ then $A$ and $B$ both fix the midpoint of the segment $[pq]$ which is again assumed not to be the case. If $p$ and $q$ are on $\partial \HCd$ then $A$ and $B$ both have a fixed point on the geodesic line $(pq)$ and act as a half-turn on the complex line spanned by $p$ and $q$. In that case, on one hand by Proposition~\ref{decompCrefllox}(a1-a3), any real plane containing $(pq)$ decomposes $A$ and $B$, therefore $(A,B)$ is $\R$-decomposable. On the other hand, the commutator $[A,B]$ is loxodromic (because it acts by translation along the geodeesic $(pq)$) and has an eigenvalue equal to 1 (because $A$ and $B$ have a common eigenvector as they both preserve the complex line spanned by $p,q$), therefore $[A,B]$ has 3 real and positive eigenvalues. In particular, the conclusion of Theorem~\ref{decomp} holds in this case as well. \EPf

\subsection{Groups fixing a point\label{section-fix-commun}}
When $A$ and $B$ have a common fixed point in $\overline{\HCd}$ the results are the following:

\begin{prop}\label{fixedpointinside} If $A,B \in {\rm PU}(2,1)$ have a common fixed point in $\HCd$ then $(A,B)$ is $\R$-decomposable.
\end{prop}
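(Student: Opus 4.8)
The plan is to reduce to the maximal compact stabilizer and to produce a single real reflection that reverses both $A$ and $B$ simultaneously. First I would conjugate in ${\rm PU}(2,1)$ so that the common fixed point $p_0$ is the origin of the ball model; then $A$ and $B$ lie in the stabilizer of $p_0$, which acts on the coordinate $\C^2$ through ${\rm U}(2)$, and I write $M_A,M_B\in {\rm U}(2)$ for the corresponding matrices. The key reformulation is the following: if $\sigma$ is a real reflection fixing $p_0$ with $\sigma A\sigma=A^{-1}$ and $\sigma B\sigma=B^{-1}$, then $\sigma A$ and $\sigma B$ are order-two antiholomorphic isometries fixing $p_0$ (indeed $(\sigma A)^2=\sigma A\sigma A=A^{-1}A=1$), hence real reflections, and the identities $A=\sigma(\sigma A)$, $B=\sigma(\sigma B)$ exhibit $(A,B)$ as $\R$-decomposable in the sense of Definition~\ref{defdecomp}. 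That $\sigma A$ and $\sigma B$ are real reflections follows from Lemma~\ref{refl2}, applied to any pair of points that these involutions swap. So the whole problem becomes: find \emph{one} real reflection $\sigma$ fixing $p_0$ that reverses both $M_A$ and $M_B$.

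A real reflection fixing the origin has the form $\sigma(z)=\Sigma\overline{z}$ with $\Sigma\in {\rm U}(2)$ symmetric; the involution condition $\sigma^2=1$ gives $\Sigma\overline{\Sigma}=I$, i.e. $\overline{\Sigma}=\Sigma^{-1}$, and symmetry guarantees that the fixed set $\{z:\Sigma\overline{z}=z\}$ is a totally real plane. A direct computation gives $\sigma M\sigma(z)=\Sigma\overline{M}\,\overline{\Sigma}\,z=\Sigma\overline{M}\Sigma^{-1}z$, so the reversing condition $\sigma M\sigma=M^{-1}$ becomes $\Sigma\,\overline{M}\,\Sigma^{-1}=M^{-1}$. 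Assuming one of the two isometries, say $A$, is non-scalar, I conjugate by ${\rm U}(2)$ to make $M_A={\rm diag}(e^{i\alpha},e^{i\beta})$ with $\alpha\neq\beta$; then $\overline{M_A}=M_A^{-1}$, so the $A$-condition forces $\Sigma$ to commute with $M_A$, whence $\Sigma={\rm diag}(e^{is},e^{it})$. Writing $M_B=\left(\begin{smallmatrix}a & b\\ c & d\end{smallmatrix}\right)$, the $B$-condition $\Sigma\,\overline{M_B}\,\Sigma^{-1}=M_B^{*}$ reduces, entry by entry, to the single scalar equation $e^{i(s-t)}\overline{b}=\overline{c}$. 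Since $M_B$ is unitary one has $|b|=|c|$, so this is solvable for $s-t$ (and if $b=c=0$ then $M_B$ is diagonal and any such $\Sigma$ works). This produces the desired $\sigma$.

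It remains to treat the case in which both $M_A$ and $M_B$ are scalar, so that $A$ and $B$ are complex reflections in the point $p_0$ (or trivial). There $\Sigma\,\overline{M}\,\Sigma^{-1}=\Sigma\,\overline{M}\,\Sigma^{-1}=M^{-1}$ holds automatically for every symmetric unitary $\Sigma$; equivalently, every real reflection fixing $p_0$ decomposes both $A$ and $B$, which is also immediate from Proposition~\ref{decompCrefllox}(e2), and such reflections clearly exist. This exhausts all cases. I expect the main obstacle to be the bookkeeping in the non-regular elliptic cases — in particular confirming that the antiholomorphic involution $\sigma A$ constructed above is genuinely a real reflection (rather than some other order-two antiholomorphic map) and cleanly separating the scalar case — whereas the generic case is a short unitary computation once the reversing reformulation is in place.
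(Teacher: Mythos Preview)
Your argument is correct. Both you and the paper reduce to the stabilizer ${\rm U}(2)$ of the common fixed point and look for a single real reflection through that point decomposing $A$ and $B$, but the executions differ. The paper simply cites \cite{FalPau} and states the geometric fact behind it: given two complex lines in $\C^2$ (the stable lines of $A$ and $B$), there is a Lagrangian meeting each of them in a real line, and Proposition~\ref{decompCrefllox}(e1--e3) then shows that the reflection in that Lagrangian decomposes both isometries. You instead recast ``$\sigma$ decomposes $A$'' as the reversing condition $\sigma A\sigma=A^{-1}$, translate it into the matrix equation $\Sigma\,\overline{M}\,\Sigma^{-1}=M^{-1}$ for a symmetric unitary $\Sigma$, and solve explicitly after diagonalising one of the two elements. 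Your route is self-contained and makes the existence of $\Sigma$ completely transparent (the only constraint is $|b|=|c|$, which unitarity guarantees); the paper's route is shorter on the page but leans on an external reference and on the case analysis of Proposition~\ref{decompCrefllox}. One small remark: your appeal to Lemma~\ref{refl2} for $\sigma A$ is fine, since $\sigma A$ is a nontrivial antiholomorphic involution fixing $p_0$ and therefore swaps any point it moves with its image; alternatively you could invoke Lemma~\ref{refl1} on each stable complex line as in the proof of Lemma~\ref{refl2}.
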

Proposition~\ref{fixedpointinside} is the first part of Theorem~2.1 of \cite{FalPau}; it essentially follows from 
the fact that, given two complex lines in $\C^2$, there exists a Lagrangian subspace intersecting each of them in 
a line (see Proposition~\ref{decompCrefllox} (e1-e3)). 
\begin{prop}\label{fixedpointboundary} Let $A,B \in {\rm PU}(2,1)$ have a common fixed point on $\partial\HCd$. 
\begin{itemize}
\item[(a)] If $A$ or $B$ is loxodromic then $(A,B)$ is $\R$-decomposable if and only if $A$ and $B$ commute.
\item[(b)] If $A$ and $B$ are both non-loxodromic and one of them is not 3-step unipotent then $(A,B)$ is $\R$-decomposable.
\item[(c)] If $A$ and $B$ are both 3-step unipotent then $(A,B)$ is $\R$-decomposable if and only if $A$ and $B$ commute.
\end{itemize}
\end{prop}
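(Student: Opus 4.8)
The plan is to conjugate so that the common fixed point is $q_\infty$ in the Siegel model; then $A$ and $B$ both lie in the stabilizer of $q_\infty$, and I read off the available types and decomposing reflections from Proposition~\ref{decompCrefllox}. First I would record which types can fix a boundary point: a regular elliptic and a complex reflection in a point have no boundary fixed point, so a \emph{non}-loxodromic element fixing $q_\infty$ is either $2$-step unipotent or screw parabolic (each with a stable complex line through $q_\infty$), $3$-step unipotent (with invariant fan through $q_\infty$, Proposition~\ref{prop-invariant-parab}), or a complex reflection whose mirror is a complex line through $q_\infty$; a loxodromic element fixing $q_\infty$ has a second fixed point $p\in\partial\HCd$. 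By Proposition~\ref{decompCrefllox}(p1),(p2) a reflection decomposing a parabolic or $3$-step unipotent type must fix $q_\infty$, and in the remaining pairings one checks the decomposing reflection may also be taken to fix $q_\infty$ (e.g. when two distinct complex mirrors meet only at $q_\infty$). This lets me project the whole configuration to $\C$ via the map $\Pi$ of \eqref{Pi} and turn decomposability into planar geometry.

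The dictionary I would establish is as follows. Under $\Pi$, the stable complex line of a $2$-step unipotent or screw parabolic, and the mirror of a complex reflection, each become a \emph{point} $a\in\C$, while the invariant fan of a $3$-step unipotent becomes an affine \emph{line} $\ell\subset\C$; a real reflection $\sigma$ fixing $q_\infty$ acts on $\C$ as a Euclidean reflection with some axis. Since preservation of a complex line through $q_\infty$ is insensitive to the vertical coordinate, Proposition~\ref{decompCrefllox}(e1),(p1) gives that $\sigma$ decomposes an element of the first kind \emph{iff its axis passes through} $a$; Proposition~\ref{decompCrefllox}(p2) together with Lemma~\ref{lem-sigma-inf-fan} (case 2, the half-turn case) gives that $\sigma$ decomposes a $3$-step unipotent \emph{iff its axis is orthogonal to} $\ell$; and Proposition~\ref{decompCrefllox}(l) says a reflection decomposing a loxodromic must exchange its two fixed points.

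Part (b) then follows at once: one element contributes a point constraint and the other contributes either a second point constraint or (if it is $3$-step unipotent) an orthogonality constraint, so I only ever have to pass a line through one or two prescribed points, or through a point perpendicular to a prescribed line, each of which is solvable; realizing the chosen axis by any infinite $\R$-circle (the vertical height being free) yields a single $\sigma$ decomposing $A$ and $B$. For part (a), if $\sigma$ decomposes the loxodromic element it swaps its two fixed points, so $\sigma(q_\infty)\neq q_\infty$; this is incompatible with decomposing a parabolic or $3$-step unipotent (which forces $\sigma(q_\infty)=q_\infty$), while if the partner is loxodromic or a complex reflection it forces the two to share their fixed data, hence to commute. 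Conversely, commuting elements share their fixed points/axis, and a reflection swapping those fixed points and meeting the common complex line in a geodesic decomposes both; the non-commuting subcases use that the centralizer of a loxodromic is the abelian diagonal torus, which contains no parabolics.

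The delicate case, and the main obstacle, is part (c). The forward direction is clean: a reflection decomposing both $3$-step unipotents has axis orthogonal to both fan-lines $\ell_A,\ell_B$, forcing $\ell_A\parallel\ell_B$, which by Lemma~\ref{lem-parabinf-commute}(2) is exactly commutativity. The subtle point is the converse, since commuting $3$-step unipotents may have \emph{parallel but distinct} invariant fans, so a priori the half-turn condition of Proposition~\ref{decompCrefllox}(p2) on the two boundary planes $\partial F_A,\partial F_B$ could clash in the vertical coordinate. I would bypass the projected picture here with an explicit normalization: commutativity forces, via the Heisenberg product \eqref{Heisprod} and a preliminary Heisenberg rotation, both maps to be Heisenberg translations $T_{[a,t_A]},T_{[b,t_B]}$ with $a,b\in\R\setminus\{0\}$, and the real reflection $\sigma\colon[z,t]\mapsto[-\overline z,-t]$ (about the real plane bounded by the imaginary axis of $\mathfrak{N}$) is an automorphism of $\mathfrak{N}$ conjugating every such translation to its inverse. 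Hence $(\sigma A)^2=(\sigma B)^2=\mathrm{Id}$, and since each of $\sigma A,\sigma B$ is a nontrivial antiholomorphic involution it exchanges a pair of distinct points and is therefore a real reflection by Lemma~\ref{refl2}; thus $\sigma$ decomposes both $A$ and $B$, and $(A,B)$ is $\R$-decomposable.
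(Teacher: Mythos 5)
Your proof is correct, and for parts (a) and (b) it is essentially the paper's own argument: the paper likewise reduces to the stabilizer of $q_\infty$, uses Proposition~\ref{decompCrefllox} as the dictionary, and in (b) picks a point on the vertical boundary line of the stable complex line of $A$ and finds an infinite $\R$-circle through it meeting the invariant object of $B$ appropriately; your projection to $\C$ via \eqref{Pi} is just a clean reformulation of the same incidence problem, and your part (a) compresses (but correctly) the paper's key step that a reflection meeting the mirror $C$ of a complex reflection in a geodesic preserves $\overline{C}$, hence forces $\overline{C}$ to contain both fixed points of the loxodromic element. The one place where you genuinely diverge is the converse of part (c). The paper settles both directions at once: by Proposition~\ref{decompCrefllox}(p2) and Lemma~\ref{lem-sigma-inf-fan}, a real reflection decomposes both $A$ and $B$ if and only if its fixed $\R$-circle is orthogonal to both invariant fans, which is possible exactly when the fans are parallel, i.e.\ (Lemma~\ref{lem-parabinf-commute}) when $A$ and $B$ commute. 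The ``clash in the vertical coordinate'' that motivates your detour is in fact a non-issue: Lemma~\ref{lem-sigma-inf-fan} constrains only the direction of the $\R$-circle, not its vertical position, so orthogonality to two parallel fans with distinct boundary planes is achieved simultaneously by any lift of a common perpendicular axis. Your substitute argument --- rotate so both translations have real Heisenberg part, check that $\sigma\colon[z,t]\mapsto[-\overline z,-t]$ is an automorphism of $\mathfrak{N}$ carrying each $T_{[a,t']}$ with $a\in\R$ to its inverse, and conclude that $\sigma A$, $\sigma B$ are antiholomorphic involutions, hence real reflections by Lemma~\ref{refl2} --- is nevertheless valid, and it buys a self-contained computation that does not lean on the fan foliation machinery (Lemmas~\ref{lem-explicit-boundary-foliation} and~\ref{lem-sigma-inf-fan}) for the existence direction, at the cost of redoing by hand what that machinery already encodes.
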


Note that the 3 parts of Proposition~\ref{fixedpointboundary} cover all cases where $A$ and $B$ have a common 
fixed point on $\partial\HCd$, because screw-parabolic isometries have 2-step unipotent part (this follows 
from the fact that their elliptic and unipotent parts commute in the classification theorem of Chen-Greenberg, 
our Theorem~2.2(c)).
\begin{proof}[Proof of Proposition~\ref{fixedpointboundary}]
\begin{itemize}
 \item[(a)] First note that if $A$ and $B$ are both loxodromic with a common fixed point, then by Proposition  
\ref{decompCrefllox} (l): $(A,B)$ is $\R$-decomposable if and only if $A$ and $B$ have the same fixed points, that is 
they commute.  

\begin{itemize}
 \item Assume that one of $A$, $B$ is loxodromic but not the other, say $A$ is loxodromic and $B$ is parabolic or a complex reflection, 
and that $A$ and $B$ commute. In particular, $A$ has distinct eigenvalues, therefore $B$ must also be diagonalizable by 
the assumption that $A$ and $B$ commute. Moreover $B$ fixes the fixed points of $A$, which means that the fixed line of $B$ contains 
the axis of $A$. Therefore any real reflection which decomposes $A$ also decomposes $B$  (by Proposition~\ref{decompCrefllox} (e2) 
and (l)).  In particular, $(A,B)$ is $\R$-decomposable.
\item Conversely, assume that $A$ is loxodromic and $B$ is not, and that $(A,B)$ is $\R$-decomposable. By part (p) of 
Proposition~\ref{decompCrefllox}, $B$ cannot be parabolic. It is thus a complex reflection. Denote by $C$ the complex 
line fixed by $B$, and by $p_A,q_A$ the fixed points of A. Then by Proposition~\ref{decompCrefllox} (e2) and (l), there 
exists a real reflection $\sigma$ with fixed $\R$-plane $R$ such that $\sigma$ exchanges $p_A$ and $q_A$, and such that 
$R\cap C$ is a geodesic. Then $\sigma$ preserves $\overline{C}$ which contains one of $p_A$ and $q_A$, therefore $\overline{C}$ 
also contains the other, and $A$ and $B$ commute.
\end{itemize}

\item[(b)] Assume that $A$ is not 3-step unipotent. Conjugating if necessary, we may assume that the common fixed 
point is $q_\infty$. Denote by $C$ the stable complex line of $A$, and pick a point $p$ in the vertical line $\partial C$. Any affine line through $p$ contained in the contact plane at $p$ is the boundary of a real plane $R$ such that $R\cap C$ contains $p$ and 
$q_\infty$. This implies that for any such $R$, $R\cap C$ is the geodesic connecting $p$ to $q_\infty$, and therefore $R$ decomposes $A$ by part (p) of Proposition~\ref{decompCrefllox}.
\begin{itemize}
\item If $B$ preserves a complex line, that is $B$ is a complex reflection or screw parabolic, then its stable 
complex line $C'$ is such that $\partial C'$ is a vertical complex line. If $C=C'$ then the result is clear.
If not, one of the infinite $\R$-circles through $p$ intersects $\partial C'$ and therefore the corresponding real reflection decomposes $B$.
\item If $B$ is 3-step unipotent then one of the infinite $\R$-circles through $p$ intersects the invariant fan of $B$ 
orthogonally, therefore the corresponding real reflection decomposes $B$.
\end{itemize}
\item[(c)] If both $A$ and $B$ are 3-step unipotent then in view of Proposition \ref{decompCrefllox} and Lemma 
\ref{lem-sigma-inf-fan}, the pair $(A,B)$ is $\R$-decomposable if and only if there exists a real plane $R$ such that the infinite 
$\R$-circle $\partial R$ is orthogonal to both invariant fans of $A$ and $B$. This is equivalent to saying the these two fans are parallel, which by Lemma \ref{lem-parabinf-commute} means that $A$ and $B$ commute. 
\end{itemize}
\end{proof}

\subsection{Maximal representations: $\C$-Fuchsian punctured torus groups \label{section-maximal}}
The case where the cross-ratio ${\bf X}(p_2,p_4,p_1,p_3)$ is real and negative corresponds to a rigidity phenomenon, giving the 
following result which holds in all dimensions: 
\begin{prop}\label{extremal} If $[A,B]$ has a fixed point in $\overline{{\rm H}_\C^n}$
  whose associated eigenvalue is real and negative, then the group
  $\langle A, B \rangle$ stabilizes a complex line $L$ in ${\rm
    H}_\C^n$. Moreover in that case the corresponding fixed point of
  $[A,B]$ is on the boundary $\partial {\rm H}_\C^n$, so that $\langle
  A, B \rangle$ is a $\C$-Fuchsian punctured torus group.
\end{prop}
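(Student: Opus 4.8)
The plan is to run the cross-ratio identity from the proof of Theorem~\ref{decomp} in reverse: feed the sign of $\lambda_1$ into \eqref{linkcycleeigenval} and then let Proposition~\ref{Xreal} dictate the geometry. First I would attach to the given fixed point $p_1$ of $[A,B]$ the usual 4-cycle
$$p_1\overset{B^{-1}}{\longrightarrow}p_2\overset{A^{-1}}{\longrightarrow}p_3\overset{B}{\longrightarrow}p_4\overset{A}{\longrightarrow}p_1,$$
with lifts $P_i$ chosen so that $[A,B]P_1=\lambda_1P_1$, and recall the identity
$$\X(p_2,p_4,p_1,p_3)=\lambda_1^{-1}\frac{|\langle P_3,P_4\rangle|^2}{|\langle P_3,P_2\rangle|^2}.$$
Since the fraction on the right is a nonnegative real number while $\lambda_1<0$, the cross-ratio $\X(p_2,p_4,p_1,p_3)$ is real and (in the non-degenerate case) negative; in particular it is different from $1$, so the quadruple cannot collapse in a way that makes Proposition~\ref{Xreal} inapplicable.

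The heart of the argument is then Proposition~\ref{Xreal}(2): a real and negative value of $\X(p_2,p_4,p_1,p_3)$ forces $p_1,p_2,p_3,p_4$ to lie on the boundary circle of a common complex line $L$, with $\{p_2,p_4\}$ separating $\{p_1,p_3\}$. This delivers two conclusions at once. First, every $p_i$, and in particular $p_1$, lies on $\partial\HCn$, which is exactly the ``moreover'' clause. Second, the separation condition makes the four points pairwise distinct, a fact I will use immediately.

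It remains to show that $A$ and $B$ each preserve $L$, for which I would invoke that a complex line is the unique one passing through any two of its distinct points. As $A$ is holomorphic, $A(L)$ is again a complex line; reading the cycle gives $A(p_3)=p_2$ and $A(p_4)=p_1$, so $A(L)$ contains the distinct points $p_1,p_2\in L$ and therefore $A(L)=L$. The same reasoning with $B(p_2)=p_1$ and $B(p_3)=p_4$ gives $B(L)=L$. Hence $\langle A,B\rangle$ stabilizes $L$ and restricts to a group of isometries of the hyperbolic plane $L$, which is precisely the assertion that $\langle A,B\rangle$, generated by $A$ and $B$ with peripheral element $[A,B]$, is a $\C$-Fuchsian punctured torus group. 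I expect no serious obstacle here: the substantive work is already contained in \eqref{linkcycleeigenval} and Proposition~\ref{Xreal}, and the only point requiring care is the bookkeeping of degenerate configurations needed to guarantee that the cross-ratio is a well-defined negative real so that Proposition~\ref{Xreal} genuinely applies. Note finally that none of these steps uses $n=2$ — the identity, Proposition~\ref{Xreal}, and the uniqueness of complex lines all hold in arbitrary dimension — which is why the statement is valid in all dimensions.
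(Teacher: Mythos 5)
Your first two steps reproduce the paper's argument exactly: the identity \eqref{linkcycleeigenval} shows that $\lambda_1<0$ forces $\X(p_2,p_4,p_1,p_3)$ to be real and negative, Proposition~\ref{Xreal}(2) then places $p_1,p_2,p_3,p_4$ on the boundary circle of a common complex line $L$ with $\{p_1,p_3\}$ and $\{p_2,p_4\}$ separating each other (giving the ``moreover'' clause), and the cycle relations $A(p_3)=p_2$, $A(p_4)=p_1$, $B(p_2)=p_1$, $B(p_3)=p_4$ show that $A$ and $B$ each preserve $L$. Up to that point you and the paper are in complete agreement.

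The gap is in your final sentence. Knowing that $\langle A,B\rangle$ stabilizes $L$ only makes it a subgroup of the isometry group of the Poincar\'e disk; it does not make it \emph{Fuchsian}, i.e.\ discrete, and discreteness is part of what ``$\C$-Fuchsian punctured torus group'' asserts (compare the discussion of maximal representations following the proposition, where equality in the Milnor--Wood inequality is equivalent to the representation being \emph{discrete} and line-preserving). Two isometries of the hyperbolic plane generate a non-discrete group in abundance, so ``restricts to a group of isometries of $L$'' is not ``precisely the assertion'' you need. The paper closes exactly this gap with an extra argument that you omit: since $\{p_1,p_3\}$ and $\{p_2,p_4\}$ separate each other on $\partial L$, and $A$ identifies the side $(p_3,p_4)$ of the ideal quadrilateral $Q=(p_1,p_2,p_3,p_4)$ with $(p_2,p_1)$ while $B$ identifies $(p_2,p_3)$ with $(p_1,p_4)$, an elementary plane-hyperbolic exercise shows that $A$ and $B$ must both be loxodromic with axes crossing inside $Q$; consequently $Q$ is disjoint from all its images under $\langle A,B\rangle$, which is what proves discreteness (the paper also notes $Q$ need not be a fundamental domain --- its images tessellate $L$ only when $[A,B]$ is parabolic). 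Without this, or some substitute for it, the ``so that'' clause of the proposition remains unproved.
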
 

\Pf Indeed, with the notation of the proof of Theorem~\ref{decomp}, we
have the following variation of lemma~\ref{positive}:
 $$
{\bf X}(p_2,p_4,p_1,p_3) \in \R^- \ \iff \ \lambda_1 \in \R^-.
$$
Now by assumption $\lambda_1<0$, so that by Proposition~\ref{Xreal}
$p_1,p_2,p_3,p_4$ are on the boundary of a common complex line
$L$ and the pairs $(p_2,p_4)$ and $(p_1,p_3)$ separate each other.
But $A$ sends $(p_3,p_4)$ to $(p_2,p_1)$ and $B$ sends
$(p_2,p_3)$ to $(p_1,p_4)$, so $A$ and $B$ both stabilize $L$.
It is then a simple exercise in the hyperbolic plane to check that this 
combinatorics implies that $A$ and $B$ must both be loxodromic, with axes meeting inside the ideal  
quadrilateral $Q=(p_1,p_2,p_3,p_4)$. Therefore, $Q$ is disjoint from 
all its images by elements of the group generated by $A$ and $B$. This proves that 
$\la A, B \ra$ is discrete. Note that in general this quadrilateral is not a fundamental
domain for $\la A,B\ra$, as its images only tessellate the complex line $L$ if $[A,B]$ is 
parabolic. See Figure \ref{idealquad}.
\EPf

\begin{figure}
\centering
\scalebox{0.5}{\includegraphics{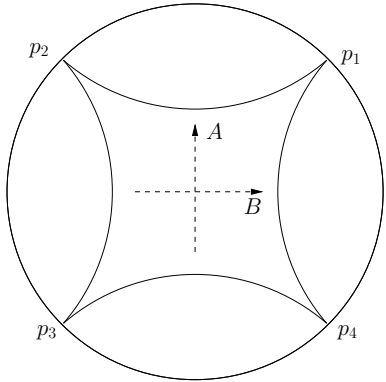}}
\caption{A fundamental fomain for the discrete punctured torus group in the case where $\lambda_1<0$\label{idealquad}}
\end{figure}

One can also interpret Proposition \ref{extremal} in terms of the Toledo invariant of the corresponding type-preserving 
representation of the fundamental group of the once-punctured torus. Given a representation $\rho$ of the 
fundamental group of a surface $\Sigma$ into ${\rm PU}(n,1)$, the \emph{Toledo invariant $\tau(\rho)$} is defined as the integral 
over $\Sigma$ of the pullback of the K\"ahler 2-form on $\HCn$ by an equivariant map $f :\tilde \Sigma \longrightarrow \HCn$:
\begin{equation}\label{toledo}
 \tau(\rho)=\int_\Sigma f^\star\omega.
\end{equation}
In the case where $\Sigma$ is non-compact, one should be careful that this integral is well-defined. This is guaranteed 
for instance by the condition that the map $f$ has finite energy (see \cite{KM}). The existence of a such a finite-energy 
equivariant map is guaranteed by the assumption that all peripheral loops on $\Sigma$ are mapped to parabolics by the 
representation (\cite{KM}). This assumption being made, it is possible to use an ideal triangulation of $\Sigma=\bigsqcup_i \Delta_i$
to compute the Toledo invariant, as in \cite{GP}. The result is that the Toledo invariant can be written as
\begin{equation}\label{splitol}
\tau(\rho) = \sum_i \int_{f(\Delta_i)} \omega,
\end{equation}
where each integral is computed over any 2-simplex with boundary $f(\partial \Delta_i)$.  But the integral of $\omega$ over 
any 2-simplex with boundary $\partial \Delta$ is equal to twice the Cartan invariant of $\Delta$ (this is Theorem 7.1.11 of \cite{G}). As a consequence, we see that for a type-preserving representation of the fundamental group of the once-punctured torus, 
the Toledo invariant is given by:
\begin{equation}\label{Cartantol}
\tau(\rho) = 2\left(\A(p_1,p_2,p_3) +\A(p_1,p_3,p_4)\right),
\end{equation}
with $p_1,...,p_4$ defined as previously. On the other hand, taking arguments in equation \eqref{crosstriple} gives:
\begin{equation}
 \arg\left(\X(p_2,p_4,p_1,p_3)\right)=\A(p_2,p_4,p_1)-\A(p_2,p_1,p_3)=\A(p_1,p_2,p_4)+\A(p_1,p_2,p_3)
\end{equation}
Therefore, if the cross-ratio $\X(p_2,p_4,p_1,p_3)$ is negative, then $\A(p_1,p_2,p_4)$ and $\A(p_1,p_2,p_3)$ 
must both be equal to either $\pi/2$ or $-\pi/2$. In view of \eqref{Cartantol}, this means that $|\tau(\rho)|=2\pi$.
But the Toledo invariant satisfies the Milnor-Wood inequality:
\begin{equation}\label{Milnor-Wood} |\tau(\rho)|\leq 2\pi(2g-2+p), \end{equation}
where equality holds if and only if the representation is discrete and preserves a complex line. In that case, the 
representation is called \textit{maximal} (see \cite{T,BIW}). In the case of the once punctured torus $g=p=1$, 
and therefore if $|\tau(\rho)|=2\pi$ the representation is maximal.

\section{Groups generated by real reflections}
We now use the criterion from Theorem~\ref{decomp2} to show that
various subgroups of ${\rm PU}(2,1)$ are generated by real
reflections. More accurately this means that they are the index 2
holomorphic subgroup of a group of isometries generated by real reflections.
\subsection{Mostow's lattices and other non-arithmetic lattices in ${\rm SU}(2,1)$}\label{mostow}
Mostow's lattices from \cite{M}  (revisited in \cite{DFP}) as well as the new non-arithmetic 
lattices in ${\rm SU}(2,1)$ studied by Deraux, Parker and the first author (see \cite{ParPau}, \cite{Pau}, 
\cite{DPP1} and \cite{DPP2}) are all symmetric complex reflection triangle groups. This means that they are 
generated by 3 complex reflections $R_1$, $R_2$ and $R_3$ which are in a symmetric configuration in the sense 
that there exists an isometry $J$ of order 3 such that $JR_iJ^{-1}=R_{i+1}$ (with $i$ mod 3). These groups 
are in fact contained (with index 1 or 3, depending on the parameters) in the group $\Gamma$ generated by $R_1$ and $J$.

It was shown in \cite{DFP} that Mostow's lattices are generated by real reflections, and in \cite{DPP2} this is extended 
to all symmetric complex reflection triangle groups. In both cases though, one finds an explicit real reflection which 
decomposes both holomorphic generators $R_1$ and $J$, which requires knowing explicit geometric properties of the group. 
Now the existence of such a real reflection follows immediately from the following consequence of Theorem~\ref{decomp}:

\begin{prop}\label{Creflpairs} If $R \in {\rm PU}(2,1)$ is a complex reflection or a complex reflection in a point, then 
for any $A \in {\rm PU}(2,1)$, the pair $(R,A)$ is $\R$-decomposable.
\end{prop}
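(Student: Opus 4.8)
The plan is to derive this as an application of Theorem~\ref{decomp}. First I would dispose of the cases where $R$ and $A$ share a fixed point. A complex reflection in a point has its unique fixed point inside $\HCd$, so if $A$ fixes it we are in the situation of Proposition~\ref{fixedpointinside}. A complex reflection with mirror $C$ fixes exactly the points of $\overline C$; a common fixed point lying in $C$ is again covered by Proposition~\ref{fixedpointinside}, and one lying on $\partial C$ by Proposition~\ref{fixedpointboundary}. In the remaining (generic) situation $R$ and $A$ have no common fixed point, and by Theorem~\ref{decomp} it then suffices to show that the commutator $[R,A]$ has a fixed point in $\overline\HCd$ whose associated eigenvalue is real and positive.

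The first key step is to check that $\Tr[R,A]\in\R$. Normalizing so that $R=\mathrm{diag}(e^{i\phi},1,1)$ (a complex reflection fixing the complex line polar to the first basis vector) and writing $A=(a_{ij})\in\mathrm{U}(2,1)$, conjugation by $R$ merely rescales the off-diagonal entries of the first row and first column by $e^{i\phi}$ and $e^{-i\phi}$ respectively, while using $(A^{-1})_{ji}=\epsilon_i\epsilon_j\overline{a_{ij}}$ with $\epsilon=(1,1,-1)$ gives, after a short expansion,
\[
\Tr[R,A]=\bigl(|a_{11}|^2+|a_{22}|^2-|a_{23}|^2-|a_{32}|^2+|a_{33}|^2\bigr)+e^{i\phi}\bigl(|a_{12}|^2-|a_{13}|^2\bigr)+e^{-i\phi}\bigl(|a_{21}|^2-|a_{31}|^2\bigr).
\]
The imaginary part is $\sin\phi\,\bigl[(|a_{12}|^2-|a_{13}|^2)-(|a_{21}|^2-|a_{31}|^2)\bigr]$, and the two bracketed differences are \emph{each} equal to $1-|a_{11}|^2$ by the $H$-orthonormality of the first row and of the first column of a matrix in $\mathrm{U}(2,1)$. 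Hence $\Tr[R,A]\in\R$, and Proposition~\ref{eigenvals} pins down the eigenvalue structure of $[R,A]$.

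The second step is to control the sign. Writing $\lambda_1$ for the eigenvalue of $[R,A]$ at a fixed point $p_1\in\overline\HCd$, relation~\eqref{linkcycleeigenval} gives $\lambda_1\cdot\X(p_2,p_4,p_1,p_3)>0$, so $\lambda_1$ cannot be negative: by Proposition~\ref{extremal} a negative value would force $\langle R,A\rangle$ to be a $\C$-Fuchsian group with $R$ loxodromic, contradicting that a complex reflection is elliptic. Together with the reality of the trace this already settles the loxodromic and unipotent cases, where the fixed-point eigenvalue is then forced to be positive (respectively equal to $1$).

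The hard part will be the regular elliptic case: here $p_1\in\HCd$, and I must guarantee that the eigenvalue $1$ furnished by Proposition~\ref{eigenvals} is attached to the \emph{interior} fixed point, i.e. that the negative-type eigenvalue is the real one (equivalently, that $[R,A]$ is conjugate to its inverse, cf. Remark~\ref{remvalpone}(a)). I would establish this through the symmetry of the four-cycle. Since $R$ maps $p_3\mapsto p_2$ and $p_4\mapsto p_1$ projectively, any real reflection $\phi$ reversing $R$ (that is, $\phi R=R^{-1}\phi$) and satisfying $\phi(p_1)=p_3$ automatically satisfies $\phi(p_2)=R^{-1}\phi(p_3)=R^{-1}p_1=p_4$. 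Because the reversers of a complex reflection are exactly the real reflections whose mirror meets $C$ in a geodesic (Proposition~\ref{decompCrefllox}(e1)), they form a large family, so the problem reduces to producing one such $\phi$ exchanging $p_1$ and $p_3$; this is the delicate existence point, and it is precisely what places the four points in the positive case of Proposition~\ref{Xreal}, forcing $\lambda_1=1$. Once this is secured, Theorem~\ref{decomp} yields that $(R,A)$ is $\R$-decomposable.
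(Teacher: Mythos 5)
Your trace computation for $R=\mathrm{diag}(e^{i\phi},1,1)$ is correct, and combined with the exclusion of negative eigenvalues via Proposition~\ref{extremal} it does settle the cases where $[R,A]$ is loxodromic or unipotent. But the regular elliptic case --- which you yourself call the hard part --- is left unproven, and the reduction you sketch for it is circular. By Lemma~\ref{exchange}, together with your own observation that a reverser of $R$ sending $p_1$ to $p_3$ automatically sends $p_2$ to $p_4$, producing such a real reflection $\phi$ \emph{is} the decomposability of $(R,A)$; it is not a smaller problem, and nothing in your argument constructs it (the subsequent detour through ``forcing $\lambda_1=1$'' and Theorem~\ref{decomp} is then redundant anyway). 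The idea that actually fills this hole, and which is how the paper argues, is to exploit the structure of the commutator as a product of two complex reflections: $[R,A]=RR'$ with $R'=AR^{-1}A^{-1}$, whose mirrors $C$ and $A(C)$ meet in a unique point $p\in\C P^2$. Choosing lifts of $R$ and $R'$ with eigenvalues $e^{2i\phi},e^{-i\phi},e^{-i\phi}$ and $e^{-2i\phi},e^{i\phi},e^{i\phi}$, the point $p$ is fixed by $[R,A]$ with eigenvalue $e^{-i\phi}e^{i\phi}=1$. If $p\in\overline{\HCd}$ this verifies the criterion of Theorem~\ref{decomp} in one stroke; in particular, when $p$ is an interior point the eigenvalue $1$ is automatically of negative type, which is exactly the point your approach cannot reach. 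If $p$ lies outside $\overline{\HCd}$, the mirrors are ultraparallel, $R$ and $R'$ rotate their common perpendicular complex line by opposite angles, so $[R,A]$ is loxodromic by a classical fact of plane hyperbolic geometry, and a continuity argument on its rotation angle forces a positive spectrum. The same scheme applied to the complex line through the two centers handles a complex reflection in a point --- a case your normalization does not literally cover, although the analogous computation does go through. No trace computation or case analysis on the type of $[R,A]$ is needed.

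There is a second, concrete error in your preliminary reduction: Proposition~\ref{fixedpointboundary} does not ``cover'' the case of a common fixed point on $\partial C$. Its parts (a) and (c) yield decomposability only when the two isometries \emph{commute}, and this is not a removable technicality. If $A$ is loxodromic with exactly one fixed point $p_A$ on $\partial C$ (e.g.\ $R=R_\theta$ and $A=T_{[1,0]}D_rT_{[1,0]}^{-1}$ in the Siegel model, which share only $q_\infty$), then $(R,A)$ is \emph{not} $\R$-decomposable: by Proposition~\ref{decompCrefllox}(e1) any real reflection decomposing $R$ preserves $C$, hence $\partial C$, so it cannot exchange $p_A\in\partial C$ with the other fixed point $q_A\notin\partial C$, as part (l) would require. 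So your affirmative claim for this configuration is contradicted by the paper's own Proposition~\ref{fixedpointboundary}(a). To be fair, this boundary configuration is a genuine exception to the statement as literally written, and the paper's proof glosses over it as well (it invokes Theorem~\ref{decomp} without checking its no-common-fixed-point hypothesis); but a correct write-up must either exclude this case explicitly or observe that the statement is to be read with that hypothesis, rather than assert, as you do, that it is handled.
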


 \Pf First assume that $R$ is a complex reflection about a  complex line $L$. Then $[R,A] = R R'$, where $R'=AR^{-1}A^{-1}$ 
is a complex reflection conjugate to $R^{-1}$. We denote by $L'$ its fixed complex line. The extensions of $L$ and $L'$ to 
$\C P^2$ intersect at a unique point $p\in \C P^2$, unless $L=L'$, in which case $[R,A]={\rm Id}$ and the result follows from
 Theorem \ref{decomp2}.  In general, $p$ is fixed by both $R$ and $R'$, thus by $[R,A]$; we distinguish 2 cases, depending on 
whether $p$ is in $\overline\HCd$ or outside of $\overline \HCd$.
\begin{enumerate}
 \item  First assume that $p\in\overline\HCd$. Consider lifts of $R$ and $R'$ to SU(2,1) such that the lift of $R$ has eigenvalues 
$e^{2i\phi},e^{-i\phi}, e^{-i\phi}$ (where the rotation angle of $R$ is $3\phi$), with $p$ corresponding to a $e^{-i\phi}$-eigenvector.
Likewise, the lift of $R'$ has eigenvalues $e^{-2i\phi},e^{i\phi}, e^{i\phi}$, with $p$ corresponding to a $e^{i\phi}$-eigenvector of 
$R'$. Then $p$ is a fixed point of $[R,A]=RR'$ in $\overline{\HCd}$ with corresponding eigenvalue 1. The result follows from Theorem 
\ref{decomp}.
\item If $p$ is outside of $\overline \HCd$, then $L$ and $L'$ are ultraparallel and $p$ is polar to their common perpendicular line, 
which we denote by $\tilde L$. The isometries $R$ and $R'$ act on $\tilde{L}$ by rotation through angles $3\phi$ and $-3\phi$ 
respectively. It is an elementary fact from plane hyperbolic geometry that the product of two elliptic elements with opposite 
rotation angles and distinct fixed points must be hyperbolic (it follows for instance from Lemma 7.38.2 of \cite{Be}). 
Therefore $RR'$ is loxodromic; moreover its eigenvalue of positive type is 1 (it is the product of those of $R$ and $R'$ which are 
inverse of one another). Therefore its eigenvalue spectrum must be either $\lbrace r, 1, 1/r\rbrace$ or $\lbrace -r, 1, -1/r\rbrace$ 
for some $r>0$. Geometrically, this means that the rotation angle of the loxodromic isometry $[R,A]$ is $0$ or $\pi$. By continuity, 
the rotation angle of $[R,A]$ must be the same for any value $\phi$ and any relative position of $L$ and $L'$ (with the restriction 
that they are ultraparallel). It is $0$ when $L=L'$ (as $R$ and $A$ are inverse of one another in this case). This means that 
$[R,A]$ has only positive eigenvalues, and the result follows from Theorem \ref{decomp}.
\end{enumerate}

Now assume that $R$ is a complex reflection about a point. As above, we write $[R,A]=RR'$ with $R'=AR^{-1}A^{-1}$. 
The eigenvalues of $R$ (respectively $R'$) are $\lbrace e^{i\phi},e^{i\phi},e^{-2i\phi}\rbrace$ 
(resp. $\lbrace e^{-i\phi},e^{-i\phi},e^{2i\phi}\rbrace$),  with $e^{2i\phi}$ (resp. $e^{-2i\phi}$) of negative type. 
This means that $R$ (resp. $R'$) acts on any complex line through its fixed point as a rotation of angle $3\phi$ (resp. 
$-3\phi$). Consider the complex line $\tilde L$ spanned by the fixed points of $R$ and $R'$. The action of $R$ and $R'$ on 
$\tilde L$ is the same as in the second item above, and this leads to the same conclusion.\EPf

\subsection{Groups with more than two generators}
Applying the criterion from Theorem~\ref{decomp} to a 2-generator subgroup of ${\rm PU}(2,1)$ is completely straightforward. 
For subgroups generated by more elements (e.g. Picard modular groups) one needs to be a bit more careful, as being generated 
by real reflections is stronger than having generators which are pairwise $\R$-decomposable. The following observation gives a 
way to bridge this gap, however its hypotheses are in general too restrictive and in practice we will need some more work to 
show that a given group is generated by real reflections.

\begin{lem}\label{genR} Let $\Gamma$ be a subgroup of ${\rm PU}(2,1)$ generated by
  $A_1,...,A_k$. If there exist real reflections
  $\sigma_1,...,\sigma_{k+1}$ such that 
  \begin{itemize}
  \item[(a)] $A_i=\sigma_i\sigma_{i+1}$ for
  $1 \leqslant i \leqslant k$, or
  \item[(b)] $A_i=\sigma_1\sigma_{i+1}$ for
  $1 \leqslant i \leqslant k$
 \end{itemize}
  then $\Gamma$ has index 2 in
  $\hat{\Gamma}=\langle \sigma_1,...,\sigma_{k+1} \rangle$. In
  particular such a $\Gamma$ is generated by real reflections.
\end{lem}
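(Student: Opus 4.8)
The plan is to exploit the single structural fact that every real reflection is antiholomorphic, which lets us organize $\hat\Gamma=\langle\sigma_1,\dots,\sigma_{k+1}\rangle$ by a holomorphy (parity) homomorphism. Recall from Section~\ref{generalites} that ${\rm PU}(2,1)$ is the index-two subgroup of holomorphic isometries inside ${\rm Isom}(\HCd)={\rm PU}(2,1)\ltimes\Z/2$; I will write $\epsilon:{\rm Isom}(\HCd)\to\Z/2$ for the associated homomorphism whose kernel is ${\rm PU}(2,1)$. Since each $\sigma_i$ is antiholomorphic, $\epsilon(\sigma_i)=1$ for every $i$. Restricting $\epsilon$ to $\hat\Gamma$ and using $\epsilon(\sigma_1)\neq 0$, the restriction is onto $\Z/2$, so its kernel $K:=\hat\Gamma\cap{\rm PU}(2,1)$ has index exactly $2$ in $\hat\Gamma$. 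The easy inclusion $\Gamma\subseteq K$ is immediate, since each $A_i$ is a product of two real reflections, hence holomorphic and contained in $\hat\Gamma$. The whole lemma then reduces to the identification $K=\Gamma$.

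Next I would pin down $K$ concretely. Because each $\sigma_i$ is an involution, every element of $\hat\Gamma$ is a positive word in the $\sigma_i$, and $\epsilon$ of such a word is simply the parity of its length; hence $K$ consists of the even-length words, and is generated by the pairwise products $\{\sigma_i\sigma_j\}$ (group an even word into consecutive pairs). The remaining task is to reduce this generating set to the prescribed $A_i$. In case (a), where $A_i=\sigma_i\sigma_{i+1}$, the telescoping identity $\sigma_i\sigma_j=(\sigma_i\sigma_{i+1})(\sigma_{i+1}\sigma_{i+2})\cdots(\sigma_{j-1}\sigma_j)$ for $i<j$ (valid since $\sigma_\ell^2=1$) shows every $\sigma_i\sigma_j$ lies in $\langle A_1,\dots,A_k\rangle$. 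In case (b), where $A_i=\sigma_1\sigma_{i+1}$, one instead writes $\sigma_i\sigma_j=(\sigma_1\sigma_i)^{-1}(\sigma_1\sigma_j)$, again expressing each pairwise product through the $A_i$. Either way $K=\langle A_1,\dots,A_k\rangle=\Gamma$, giving $[\hat\Gamma:\Gamma]=2$; the concluding phrase ``generated by real reflections'' is then just the definition of $\hat\Gamma$.

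The one genuine subtlety, and what I would flag as the main obstacle, is that the ``parity of a word'' invoked above is not a priori well defined: an abstract group generated by involutions need not admit a length-parity homomorphism, since relations among the generators could have odd length. It is precisely the geometric holomorphy homomorphism $\epsilon$ that legitimizes the even/odd dichotomy here, so the argument must be routed through $\epsilon$ rather than through any purely combinatorial notion of length. Once this is granted, the reverse inclusion $K\subseteq\Gamma$ — via telescoping in case (a) and the $(\sigma_1\sigma_i)^{-1}(\sigma_1\sigma_j)$ rewriting in case (b) — is the only computational content, and it is entirely routine.
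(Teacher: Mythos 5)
Your proof is correct and takes essentially the same approach as the paper's own (one-line) proof: the paper likewise observes that in each case all pairwise products $\sigma_i\sigma_j$ lie in $\Gamma$, and then concludes that $\Gamma$ is precisely the index-2 holomorphic subgroup of $\hat{\Gamma}$, which is exactly your argument via the holomorphy homomorphism $\epsilon$. Your write-up merely makes explicit the two steps the paper compresses --- the parity/holomorphy homomorphism legitimizing the even-word dichotomy, and the telescoping (case (a)) or $(\sigma_1\sigma_i)^{-1}(\sigma_1\sigma_j)$ (case (b)) identities --- both of which are implicit in the paper's statement.
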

\Pf In each case, pairwise products of the $\sigma_i$ are in $\Gamma$, therefore $\Gamma$ is the index 2 holomorphic 
subgroup of $\hat{\Gamma}$. \EPf

\subsection{Picard modular groups}
We denote $\Gamma_d={\rm SU}(2,1,\mathcal{O}_d)$ the subgroup of ${\rm SU}(2,1)$ consisting of matrices with entries in 
$\mathcal{O}_d$, where $d$ is a positive squarefree integer and $\mathcal{O}_d$ denotes the ring of
integers of $\Q [\sqrt{-d}]$. Recall that $\mathcal{O}_d=\Z[i\sqrt{d}]$ when $d \equiv 1,2\mbox{ (mod  $4$)}$ and 
$\mathcal{O}_d=\Z[\frac{1+i\sqrt{d}}{2}]$ when $d \equiv 3\mbox{ (mod  $4$)}$.
The groups $\Gamma_d$ are usually called \emph{Picard modular groups}. Denote by $\Gamma_\infty(d)$ the 
stabilizer of $q_\infty$ in $\Gamma_d$ and $I_0 \in \Gamma_d$ given by:
\begin{equation}\label{I0}
I_0=
\begin{bmatrix}
0 & 0 & 1 \\
0 & -1 & 0 \\
1 & 0 & 0
\end{bmatrix}.
\end{equation}
The isometry $I_0$ is a complex reflection of order 2 about the complex line polar to the vector 
$v_0=\begin{bmatrix} 1 & 0 & 1\end{bmatrix}^T$. 
Elements of $\Gamma_\infty(d)$ are upper triangular matrices with units of $\mathcal{O}_d$ on the diagonal. In particular, 
$\Gamma_\infty(d)$ contains no Heisenberg dilations (see \eqref{stabinf}). In other words 
$\Gamma_\infty(d) < {\rm Isom}(\mathfrak{N})$ and therefore it fits into the exact sequence 
\eqref{exact} as:
$$
1 \longrightarrow \Gamma_\infty^v(d) \longrightarrow \Gamma_\infty(d) \overset{\Pi_*}{\longrightarrow} \Gamma_\infty^h(d) \rightarrow 1,
$$
where $\Gamma_\infty^v(d) < {\rm Isom}^+(\R)$ and $\Gamma_\infty^h(d) < {\rm Isom}^+(\C)$. A complete description of 
the subgroups $\Gamma_\infty^h(d)$ and $\Gamma_\infty^v(d)$ was given in \cite{FFPn} for $d \neq 1,3$ 
(see \cite{FFP} for $d=1$ and \cite{FalPar} for $d=3$).
\begin{lem}[\cite{FFPn}] \begin{itemize}
\item[(a)] $\Gamma_\infty^v(d) = \langle T_0 \rangle$, with $T_0=T_{[0,\sqrt{d}]}$ as in \eqref{stabinf}.
\item [(b)] $\Gamma_\infty^h(d) = {\rm Isom}^+(\mathcal{O}_d)$ if $d \equiv 3 \ (mod \, 4)$ ($d\neq 3$);   $\Gamma_\infty^h(d)$ has 
index 2 in ${\rm Isom}^+(\mathcal{O}_d)$ if $d \equiv 1,2 \ (mod \, 4)$ ($d\neq 1$).
\end{itemize}
\end{lem}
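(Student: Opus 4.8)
The plan is to work directly with the upper-triangular matrices of ${\rm SU}(2,1,\mathcal{O}_d)$ fixing $q_\infty$ and to read off the two factors of the restriction of the exact sequence \eqref{exact} to $\Gamma_\infty(d)$,
\[
1 \longrightarrow \Gamma_\infty^v(d) \longrightarrow \Gamma_\infty(d) \overset{\Pi_*}{\longrightarrow} \Gamma_\infty^h(d) \longrightarrow 1.
\]
Write a general element $M \in \Gamma_\infty(d)$ as an upper-triangular matrix with diagonal entries $u,v,w$, super-diagonal entries $b$ (in position $(1,2)$) and $e$ (in position $(2,3)$), and corner entry $c$, all in $\mathcal{O}_d$. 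Imposing $M^* H M = H$ for the Siegel form $H$ together with $\det M = 1$ gives $\bar u w = 1$, $|v| = 1$, $uvw = 1$, the off-diagonal relation $\bar b w + \bar v e = 0$, and the corner relation $2\,{\rm Re}(\bar c\,w) + |e|^2 = 0$. Since $u,w \in \mathcal{O}_d$ and $\bar u w = 1$, both are units of $\mathcal{O}_d$; as $\mathcal{O}_d$ is the ring of integers of an imaginary quadratic field, its unit group is finite, and for $d \neq 1,3$ it equals $\{\pm 1\}$. Hence $u = w = \pm 1$ and $v = 1$, so the rotational part of $\Pi_*(M)$ is $\pm 1$. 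This already equals the point group of ${\rm Isom}^+(\mathcal{O}_d)$, which for $d \neq 1,3$ is $\{\pm 1\}$ because the lattice $\mathcal{O}_d \subset \C$ is neither square nor hexagonal, and both values are realised (take $b=c=e=0$). The whole discrepancy between $\Gamma_\infty^h(d)$ and ${\rm Isom}^+(\mathcal{O}_d)$ must therefore be carried by the translation part.

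For part (a), I would identify $\Gamma_\infty^v(d) = \ker \Pi_*$ with the vertical Heisenberg translations $T_{[0,t]}$ of \eqref{stabinf} lying in $\Gamma_d$, namely the matrices with $u=v=w=1$, $b=e=0$, and purely imaginary corner entry $c \in \mathcal{O}_d$. In each residue class of $d$ modulo $4$ the purely imaginary elements of $\mathcal{O}_d$ form the rank-one lattice $i\sqrt d\,\Z$ (for $d \equiv 3$ one checks that $p + q\tfrac{1+i\sqrt d}{2}$ is purely imaginary only when $q$ is even). Hence $\Gamma_\infty^v(d)$ is infinite cyclic, generated by the shortest vertical translation it contains, which is $T_0$; this proves (a).

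For part (b) the remaining task is to determine the set of translations $z$ that occur, and here the corner relation $2\,{\rm Re}(\bar c\,w) + |e|^2 = 0$ is decisive. With $w = \pm 1$ and $|z|^2 = |e|^2$ it forces ${\rm Re}(c) = \mp|z|^2/2$, and a matrix $M$ realising $(\pm 1, z)$ exists precisely when $\mathcal{O}_d$ contains an element of this real part, the imaginary part of $c$ being then adjustable by a vertical translation. The norm $|z|^2 = N(z)$ is always a rational integer. For $d \equiv 3 \bmod 4$ the real parts of $\mathcal{O}_d$ fill $\tfrac12\Z$, so $\mp|z|^2/2$ is always attained and every $z \in \mathcal{O}_d$ occurs, giving $\Gamma_\infty^h(d) = {\rm Isom}^+(\mathcal{O}_d)$. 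For $d \equiv 1,2 \bmod 4$ one has $\mathcal{O}_d = \Z[i\sqrt d]$, whose real parts are exactly $\Z$, so one needs $|z|^2 \in 2\Z$; writing $z = m + n i\sqrt d$ gives $N(z) = m^2 + dn^2$, and an elementary parity count ($m \equiv n \bmod 2$ when $d$ is odd, $m$ even when $d$ is even) shows the even-norm elements form an index-two sublattice of $\mathcal{O}_d$. Hence $\Gamma_\infty^h(d)$ has index two in ${\rm Isom}^+(\mathcal{O}_d)$, proving (b).

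The main obstacle I anticipate is the arithmetic bookkeeping of the last paragraph: isolating the single relation ${\rm Re}(c) = \mp|z|^2/2$ from $M^* H M = H$ and recognising that its solvability over $\mathcal{O}_d$ is governed purely by the parity of $N(z)$. The one conceptual point to verify carefully is that the rotational parts of $\Gamma_\infty^h(d)$ and ${\rm Isom}^+(\mathcal{O}_d)$ coincide, so that the index-two phenomenon for $d \equiv 1,2 \bmod 4$ is genuinely carried by the translation sublattice and not partly absorbed into the point group.
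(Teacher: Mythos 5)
Your proposal is correct, and while it shares the paper's overall skeleton (restrict the exact sequence \eqref{exact} to $\Gamma_\infty(d)$, use finiteness of the unit group of $\mathcal{O}_d$ to pin down the rotational part, then decide which translations of $\C$ lift), it organizes the decisive step in a genuinely different and more robust way. The paper, following \cite{FFPn}, tests liftability only of the chosen generators $\hat{T_1},\hat{T_2}$ of the translation lattice, replacing a generator by its square when it fails to lift. Your corner relation $2\,{\rm Re}(\bar c\,w)+|e|^2=0$ instead yields a complete criterion: the translation by $z$ lifts if and only if $\mathcal{O}_d$ contains an element of real part $\mp N(z)/2$, i.e.\ always when $d\equiv 3\pmod 4$, and exactly when $N(z)$ is even when $d\equiv 1,2\pmod 4$. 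The difference is not cosmetic. For $d\equiv 1\pmod 4$ the paper's case (III) concludes that the translation part of $\Gamma_\infty^h(d)$ is $\langle \hat{T_1}^2,\hat{T_2}^2\rangle$, of index $4$ in $\mathcal{O}_d$; this inference is a non sequitur (knowing which of two chosen generators lift does not determine the image subgroup), and its conclusion is false: $\hat{T_1}\hat{T_2}$, the translation by $1+i\sqrt d$, has even norm $1+d$ and does lift --- indeed $T_{[1+i\sqrt d,\,0]}$ from \eqref{stabinf} has entries $1$, $-1+i\sqrt d$, $-(1+d)/2$, all in $\mathcal{O}_d$. Your parity criterion catches this and gives the even-norm sublattice $\langle 2,\,1+i\sqrt d\rangle$ of index $2$, which is exactly what part (b) of the Lemma asserts. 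So your blind proof agrees with the statement being proved, whereas the paper's own sketch of case (III) does not (and this slip propagates to the index counts quoted later in the paper for $d\equiv 1 \pmod 4$).

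Two minor points to tidy. First, in part (a) your computation actually produces the generator $T_{[0,2\sqrt d]}$, since the corner entry $it/2$ must lie in $i\sqrt d\,\Z$; the Lemma's normalization $T_0=T_{[0,\sqrt d]}$ differs by a factor of $2$, but this is an inconsistency internal to the paper (its own proof computes $[T_2,T_1]=T_{[0,2\sqrt d]}$ and calls it $T_0$), so you need only flag the normalization rather than change your argument. Second, instead of saying the imaginary part of $c$ is ``adjustable by a vertical translation,'' simply observe that any $c\in\mathcal{O}_d$ with the prescribed real part produces a valid matrix; existence of one such $c$ is all that is required.
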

\Pf
We sketch the proof given in \cite{FFPn}. First, note that an element $P_{(z,t,\theta)}$ of 
${\rm Isom}(\mathfrak{N})$ is in the kernel of $\Pi_*$ if and only if $z=\theta=0$, see \eqref{exact}. This gives part $(a)$. Next, when 
$d\neq 1,3$, the only units of $\mathcal{O}_d$ are $\pm 1$ and thus the rotational part of an element 
$P_{(z,t,\theta)}\in\Gamma_\infty(d)$ must satisfy $\theta=0$ or $\theta=\pi$. In particular the subgroup of $\Gamma_\infty(d)$ formed 
by Heisenberg translations has index two. The same observation holds for the translation subgroup Isom$^+(\mathcal{O}_d)$, which is an 
extension of its translation subgroup $\Delta_d^t$ by the central involution $w\longmapsto -w$. 
The strategy used in \cite{FFPn} is to try to lift the generators of $\Delta_d^t$ to $\Gamma_\infty(d)$. The existence of such a lift can be verified using the explicit form of the map $\Pi_*$ given \eqref{exact}.
\begin{itemize}
\item[(I)] Assume that $d \equiv 3 \mbox{ (mod 4)}$ (and $d \neq 3$) . Then $\Delta^t=\langle \hat{T_1}, \hat{T_2} \rangle$ with: 
$\hat{T_1}=\left(\begin{matrix} 1 & 1 \\ 0 & 1\end{matrix}\right)$,
 $\hat{T_2}=\left(\begin{matrix} 1 & (1+i\sqrt{d})/2 \\ 0 & 1\end{matrix}\right)$.
Both $\hat{T_1}$ and $\hat{T_2}$ lift to $\Gamma_\infty(d)$, and their lifts $T_1$ and $T_2$ satisfy 
$[T_2,T_1]=T_0$ which generates $\Gamma_\infty^v(d)$. Therefore, $ \langle T_1, T_2 \rangle $ 
has index 2 in $\Gamma_\infty(d)$, and $\Gamma_\infty^h(d)=\la \hat T_1,\hat T_2\ra={\rm Isom}^+(\mathcal{O}_d)$. 

\item[(II)] Assume that $d \equiv 2 \mbox{ (mod 4)}$. Then $\Delta^t=\langle \hat{T_1}, \hat{T_2} \rangle$ with: 
$\hat{T_1}=\left(\begin{matrix} 1 & 1 \\ 0 & 1\end{matrix}\right)$, 
$\hat{T_2}=\left(\begin{matrix} 1 & i\sqrt{d} \\ 0 & 1\end{matrix}\right)$.
Now $\hat{T_2}$ lifts to $\Gamma_\infty(d)$, $\hat{T_1}$ does not but $\hat{T_1}^2$ does. 
This implies that $\Gamma_\infty^h(d)=\la \hat{T_1}^2,\hat{T_2}\ra$. The lifts $T_2$ and $T'_1$ satisfy $[T_2,T'_1]=T_0^4$ 
which generates an index 4 subgroup of $\Gamma_\infty^v(d)$. Therefore, $ \langle T'_1, T_2 \rangle $ has 
index 8 in $\Gamma_\infty(d)$.

\item[(III)] Assume that $d \equiv 1 \mbox{ (mod 4)}$ (and $d \neq 1$). Then $\Delta^t=\langle \hat{T_1}, \hat{T_2} \rangle$ 
with: $\hat{T_1}=\left(\begin{matrix} 1 & 1 \\ 0 & 1\end{matrix}\right)$, 
$\hat{T_2}=\left(\begin{matrix} 1 & i\sqrt{d} \\ 0 & 1\end{matrix}\right)$.
Now $\hat{T_1}$ and $\hat{T_2}$ do not lift to $\Gamma_\infty(d)$,  but $\hat{T_1}^2$ and $\hat{T_2}^2$ do. 
This implies that $\Gamma_\infty^h(d)=\la \hat{T_1}^2,\hat{T_2}^2\ra$, which has index 4 in $\la \hat{T_1},\hat{T_2}\ra$.
The lifts $T'_1$ and $T'_2$ satisfy $[T'_2,T'_1]=T_0^4$ which generates an index 4 subgroup of $\Gamma_\infty^v(d)$. 
Therefore, $ \langle T'_1, T'_2 \rangle $ has index 16 in $\Gamma_\infty(d)$.
\end{itemize}
\EPf

\begin{rmk}
 In cases (II) and (III) above, the translation directions of $T_1$ and $T_2$ are orthogonal, and so are the translation 
directions of the generating pairs of $\Gamma_\infty^h(d)$. In case (I) this is not true. However, the two elements $\hat{T_1}$ 
and $\hat{T_2}^2\hat{T_1}^{-1}$ have this property and generate an index two subgroup of $\Gamma_\infty^h(d)$. Therefore in 
view of the previous discussion if we make the following changes:
\begin{itemize}
 \item in case (I), $(\hat{T_1},\hat{T_2})\longmapsto(\hat{T_1},\hat{T_2}^2\hat{T_1}^{-1})$,
 \item in case (II), $(\hat{T_1},\hat{T_2})\longmapsto(\hat{T_1}^2,\hat{T_2})$,
 \item in case (III), $(\hat{T_1},\hat{T_2})\longmapsto(\hat{T_1}^2,\hat{T_2}^2)$,
\end{itemize}
we obtain finite index subgroups of $\Gamma_\infty^h(d)$ generated by pairs of translations that lift to $\Gamma_\infty(d)$
and have orthogonal directions. We will denote the lifts of the new ${\hat T}_1$ and ${\hat T}_2$ by $T_1$ and $T_2$, and from now on
 we will only consider the group $\la T_1,T_2\ra$.
\end{rmk}
 The central involution $w\longmapsto -w$ lifts to $\Gamma_\infty(d)$ as the transformation given by
\begin{equation}\label{R1}
 R_1=\begin{bmatrix}
      -1 & 0 & 0\\
       0 & 1 & 0\\
       0 & 0 & -1 
     \end{bmatrix}.
\end{equation}
In particular, $\Gamma_\infty(d)$ is generated by $R_1$ and its Heisenberg translation subgroup. Note that $R_1$ is a complex reflection
of order 2 about the complex line polar to the vector $v_1=\begin{bmatrix} 0 & 1 & 0\end{bmatrix}^T$.
\begin{lem}
 \begin{enumerate}
  \item The real reflection $\sigma_0$ about the standard real plane $\R^2\cap\HCd$ decomposes $R_1$, $I_0$ and $T_2$.
  \item The real reflection $R_1\sigma_0$ decomposes $T_1$.
\end{enumerate}

\end{lem}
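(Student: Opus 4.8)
The plan is to treat the two complex reflections $R_1,I_0$ on one hand and the parabolic translations $T_1,T_2$ on the other, applying in each case the appropriate part of Proposition~\ref{decompCrefllox}. Throughout I work in the Siegel model, where $\sigma_0$ is complex conjugation (lift the identity, in the sense of Lemma~\ref{antihololift}), its fixed real plane is $L=\R^2\cap\HCd$, and its boundary action is $[z,t]\mapsto[\overline z,-t]$, so $\partial L$ is the real ($x$-)axis.

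For $R_1$ and $I_0$ I would use part (e1). Both are order-$2$ complex reflections: $R_1$ fixes the complex line $C_1$ polar to the real vector $v_1=[0,1,0]^T$ and $I_0$ the line $C_0$ polar to $v_0=[1,0,1]^T$. Computing Hermitian orthogonality in the Siegel form, $C_1=\{w_2=0\}$ and $C_0=\{w_1+w_3=0\}$; the point is that both are cut out by \emph{real} linear equations, hence are preserved by $\sigma_0$. Thus $\sigma_0$ restricts to an antiholomorphic involution of each $C_i$, whose fixed locus is a geodesic, and this geodesic is exactly $L\cap C_i$ (a short norm computation confirms it meets $\HCd$). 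By Proposition~\ref{decompCrefllox}(e1) this is precisely the condition for $\sigma_0$ to decompose $R_1$ and $I_0$.

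For the parabolic generators I would invoke part (p2) together with Lemma~\ref{lem-sigma-inf-fan}. Recall from the preceding remark that, after the generator change, $T_1$ translates in the real direction and $T_2$ in the imaginary direction; each has nonzero horizontal part and is therefore $3$-step unipotent. By Proposition~\ref{prop-invariant-fan} the vertical plane $\partial F_{T_i}$ lies above the affine line in $\C$ through the translation direction of $T_i$, and by Proposition~\ref{decompCrefllox}(p2) a real reflection decomposes $T_i$ exactly when it fixes $q_\infty$, preserves $F_{T_i}$, and restricts to a half-turn on $\partial F_{T_i}$; by Lemma~\ref{lem-sigma-inf-fan} these last conditions amount to saying that the boundary $\R$-circle of the reflection meets and is orthogonal to $\partial F_{T_i}$. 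Since $\partial L$ is the real axis and $T_2$ translates along the imaginary axis, orthogonality holds and $\sigma_0$ decomposes $T_2$. Symmetrically, one first checks that $R_1\sigma_0$ is a real reflection: it has lift $R_1$ in the sense of Lemma~\ref{antihololift}, and as $R_1$ is real and involutive one gets $(R_1\sigma_0)^2=\mathrm{Id}$ with a $2$-dimensional real fixed plane, so Lemma~\ref{refl2} applies. Its boundary action is $[z,t]\mapsto[-\overline z,-t]$, whose fixed $\R$-circle is the imaginary ($y$-)axis; since $T_1$ translates along the real axis, the orthogonality condition again holds and $R_1\sigma_0$ decomposes $T_1$.

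The main obstacle is bookkeeping rather than any deep difficulty: one must correctly match the real/imaginary translation directions of $T_1,T_2$ produced by the generator change with the boundary $\R$-circles of $\sigma_0$ and $R_1\sigma_0$, and be careful to land in the orthogonal (half-turn) case of Lemma~\ref{lem-sigma-inf-fan} rather than the leaf case, since only the former yields a decomposition of a $3$-step unipotent.
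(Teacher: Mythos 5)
Your proposal is correct and follows essentially the same route as the paper: for the parabolic generators it is identical (locate the invariant fans of $T_1,T_2$ via Proposition~\ref{prop-invariant-fan}, compute the fixed $\R$-circles of $\sigma_0$ and $R_1\sigma_0$, and conclude by orthogonality using Proposition~\ref{decompCrefllox}(p) and Lemma~\ref{lem-sigma-inf-fan}). The only cosmetic difference is in the complex-reflection part, where you invoke the packaged criterion of Proposition~\ref{decompCrefllox}(e1) by checking that the mirrors are real, while the paper argues directly that $R_1\sigma_0$ and $I_0\sigma_0$ fix the polar vectors $v_1,v_0$, preserve the corresponding complex lines, and are therefore real reflections by Lemmas~\ref{refl1} and~\ref{refl2}; both verifications rest on the same underlying fact.
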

\Pf
\begin{enumerate}
\item Clearly, $\sigma_0$ fixes the polar vector to the mirrors of $R_1$ and $I_0$. Therefore it decomposes these two transformations. 
Indeed the two transformations $I_0\sigma_0$  and $R_1\sigma_0$ fix the vectors $v_0$ and $v_1$ respectively and thus preserve the 
corresponding complex lines. They are therefore real reflections, by Lemmas \ref{refl1} and \ref{refl2}.  On the other hand, $T_2$ is a lift to $\Gamma_\infty(d)$ of a translation by a multiple of $i\sqrt{d}$. This implies that
$T_2$ is a Heisenberg translation by $[ik\sqrt{d},t_2]$, where $k$ is an integer and $t_2$ is chosen so that $T_2\in\Gamma_\infty(d)$.
In particular, this implies that the invariant fan of $T_2$ intersects the boundary of $\HCd$ along a vertical plane of the type 
$\lbrace x=x_2\rbrace$, for some $x_2\in\R$. The fixed $\R$-circle of $\sigma_0$ is the $x$-axis of the Heisenberg group, and is 
therefore orthogonal to this vertical plane. This implies that $\sigma_0$ decomposes $T_2$ by Proposition~\ref{decompCrefllox}.

\item A direct computation shows that in horospherical coordinates $R_1\sigma_0$ acts as $(z,t,u)\longmapsto (-\bar z,-t,u)$. Its 
restriction to the boundary is thus the reflection about the infinite $\R$-circle $\{(iy,0),y\in \R\}$. As $T_1$ is the Heisenberg 
translation by some $[k,it_1]$, where $k\in\Z$, its invariant fan intersects the boundary along a vertical plane $\{y=y_1\}$ for some 
$y_1\in\R$, and is therefore orthogonal to the fixed $\R$-circle of $R_1\sigma_0$. Again, by Proposition~\ref{decompCrefllox}, this implies that $R_1\sigma_0$ decomposes $T_1$ .

\end{enumerate}
\EPf

We will denote the corresponding real reflections as follows:
\begin{equation}
 \sigma_1=R_1\sigma_0, \, \sigma_2=T_2\sigma_0, \, \sigma_4=I_0\sigma_0\mbox{ and } \sigma_3=T_1\sigma_1=T_1R_1\sigma_0.
\end{equation}
\begin{prop}
For all $d \geqslant 1$, the subgroup $\langle \Gamma_\infty (d), I_0 \rangle$ of $\Gamma_d$ is generated by real reflections up to finite index.
\end{prop}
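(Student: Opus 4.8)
The plan is to realize a finite-index subgroup of $\langle\Gamma_\infty(d),I_0\rangle$ as the holomorphic index-$2$ subgroup of the reflection group $\hat\Gamma=\langle\sigma_0,\sigma_1,\sigma_2,\sigma_3,\sigma_4\rangle$, and then to quote Lemma~\ref{genR}. The preceding lemma already decomposes three of the four natural holomorphic generators using the \emph{common} reflection $\sigma_0$, namely $R_1=\sigma_1\sigma_0$, $T_2=\sigma_2\sigma_0$ and $I_0=\sigma_4\sigma_0$. The obstruction to applying Lemma~\ref{genR}(b) directly is that the fourth generator is decomposed as $T_1=\sigma_3\sigma_1$, with $\sigma_1$ rather than $\sigma_0$ in common; so the first step is to bring $T_1$ into line.

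The key observation is that, since $\sigma_3=T_1\sigma_1=T_1R_1\sigma_0$ and $\sigma_0$ is an involution, one has $\sigma_3\sigma_0=T_1R_1$. Hence the holomorphic isometry $T_1R_1$ \emph{is} decomposed by the common reflection $\sigma_0$. Replacing the generator $T_1$ by $T_1R_1$ changes nothing, since $T_1=(T_1R_1)R_1$ and $R_1$ is itself a generator. We are thus left with four holomorphic generators, each decomposed with $\sigma_0$ in common, $R_1=\sigma_1\sigma_0$, $T_2=\sigma_2\sigma_0$, $I_0=\sigma_4\sigma_0$ and $T_1R_1=\sigma_3\sigma_0$, so that Lemma~\ref{genR}(b) applies. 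It yields that
\[
\Gamma'=\langle R_1,T_2,I_0,T_1R_1\rangle=\langle R_1,T_1,T_2,I_0\rangle
\]
is precisely the index-$2$ holomorphic subgroup of $\hat\Gamma$; in particular $\Gamma'$ is generated by real reflections in the sense of this section.

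It then remains to prove that $\Gamma'$ has finite index in $\langle\Gamma_\infty(d),I_0\rangle$, for which it suffices to show that $H=\langle R_1,T_1,T_2\rangle$ has finite index in $\Gamma_\infty(d)$. Apply $\Pi_*$: its restriction to $\Gamma_\infty(d)$ has kernel $\Gamma_\infty^v(d)=\langle T_0\rangle$ and image $\Gamma_\infty^h(d)$. The image $\Pi_*(H)=\langle\Pi_*(R_1),\Pi_*(T_1),\Pi_*(T_2)\rangle$ contains the involution $w\mapsto -w$ together with the lattice generated by the horizontal translations $\Pi_*(T_1),\Pi_*(T_2)$, which has finite index in $\Gamma_\infty^h(d)$ by the description recalled above. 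On the other hand the commutator relations $[T_2,T_1]=T_0$ (case~(I)) and $[T_2,T_1]=T_0^4$ (cases~(II)--(III)) show that $H\cap\Gamma_\infty^v(d)$ contains a finite-index subgroup of $\Gamma_\infty^v(d)$. Assembling these two facts through the extension $1\to\Gamma_\infty^v(d)\to\Gamma_\infty(d)\to\Gamma_\infty^h(d)\to 1$ gives $[\Gamma_\infty(d):H]<\infty$, and adjoining $I_0$ gives $[\langle\Gamma_\infty(d),I_0\rangle:\Gamma']<\infty$, as required.

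The reflection-group step is immediate once the substitution of $T_1$ by $T_1R_1$ has put all four generators into the common-$\sigma_0$ form demanded by Lemma~\ref{genR}(b); I expect the only delicate point to be the final finite-index bookkeeping, where the horizontal and vertical contributions to $H$ must be recombined through the above extension, the resulting index being $2$, $8$ or $16$ according as $d\equiv 3,2,1\pmod 4$ but finite in every case.
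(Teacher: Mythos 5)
Your skeleton is the paper's: the same five reflections $\sigma_0,\dots,\sigma_4$, the same holomorphic group $\Gamma'=\la R_1,T_1,T_2,I_0\ra$ sitting inside the reflection group $\hat\Gamma$, and the same concluding finite-index claim. Your treatment of the reflection-group step, however, is genuinely different and in fact sharper than the paper's. The observation $\sigma_3\sigma_0=T_1R_1$ puts all four generators in the common-$\sigma_0$ form (after replacing each generator by its inverse, so that the common reflection sits on the side required by the statement), and Lemma~\ref{genR}(b) then identifies $\Gamma'$ as \emph{exactly} the index-2 holomorphic subgroup of $\hat\Gamma$. The paper does not do this: it only proves the inclusion of the holomorphic part of $\Gamma_\sigma=\hat\Gamma$ into $\la\Gamma_\infty(d),I_0\ra$, by writing each $\sigma_i=A_i\sigma_0$ with $A_i\in\la\Gamma_\infty(d),I_0\ra$, computing $\sigma_i\sigma_j=A_i\overline{A_j}$, and using the stability of $\la\Gamma_\infty(d),I_0\ra$ under entrywise complex conjugation. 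Your route makes that conjugation argument unnecessary, and this part of your proposal is correct.

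The gap is in your last step, namely the sentence ``for which it suffices to show that $H=\la R_1,T_1,T_2\ra$ has finite index in $\Gamma_\infty(d)$.'' That reduction is not valid: adjoining a common element to a group and to a finite-index subgroup does not in general preserve finite index. For instance, in the free group $F_2=\la a\ra *\la x\ra$, the subgroup $\la a^2\ra$ has index 2 in $\la a\ra$, yet $\la a^2,x\ra$ has infinite index in $\la a,x\ra$ (a finite-index subgroup of $F_2$ of index $n$ has rank $n+1$ by Nielsen--Schreier, so the rank-2 subgroup $\la a^2,x\ra$ could only have finite index by being all of $F_2$, which it is not since $a\mapsto 1$, $x\mapsto 0$ defines a homomorphism to $\Z/2$ killing it but not $a$). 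So the phrase ``adjoining $I_0$'' is precisely where relations between $I_0$ and $\Gamma_\infty(d)$ specific to the group $\la\Gamma_\infty(d),I_0\ra\subset{\rm PU}(2,1)$ must enter, and your exact-sequence computation (which is fine, and reproves the index estimates from the paper's lemma) says nothing about them. To be fair, the paper's own proof also \emph{asserts}, rather than derives, that $\la T_1,T_2,R_1,I_0\ra$ has finite index in $\la\Gamma_\infty(d),I_0\ra$; but it does not claim your implication, which is the step that would fail. (A further symptom that this bookkeeping is not formal: your predicted indices $2,8,16$ disagree with the paper's $2,4,8$, because $R_1$ and $I_0$ absorb part of the index in ways the extension argument cannot see.)
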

\Pf
Consider the group $\Gamma_\sigma=\la \sigma_j,j=0\cdots 4\ra$. By construction $\Gamma_\sigma$ contains  the group
$\langle  T_1,T_2,R_1,I_0\rangle$ which has finite index in $\langle \Gamma_\infty(d), I_0 \rangle$. 
On the other hand, we claim that all pairwise products of the $\sigma_i$ are in $\langle \Gamma_\infty(d), I_0 \rangle$. 
Indeed, each of the $\sigma_i$ is of the form $A_i\sigma_0$ with $A_i \in \langle \Gamma_\infty(d), I_0 \rangle$, therefore: 
$\sigma_i\sigma_j=A_i\sigma_0A_j\sigma_0=A_i\overline{A_j}$. The result follows by noting that $\Gamma_\infty(d)$ and $ I_0$ 
are preserved under complex conjugation of entries.
\EPf

Note that from the previous discussion, denoting by $T_1,T_2$ the lifts to $\Gamma_\infty(d)$ of generators of the orthogonal 
reflection subgroup of ${\rm Isom}^+(\mathcal{O}_d)$ (or their square when applicable), the index of 
$\langle  T_1,T_2,R_1,I_0\rangle$ in $\langle \Gamma_\infty(d), I_0 \rangle$ is 2 when 
$d \equiv 3 \mbox{ (mod 4)}$ (and $d \neq 3$),  4 when $d \equiv 2 \mbox{ (mod 4)}$, and 8 when  $d \equiv 1 \mbox{ (mod 4)}$ (and $d \neq 1$). For $d=3$ the 
index is 6 and for $d=1$ it is 16 (as there is an additional rotational part of order 3, 2 respectively). However in the 
latter cases we can recover the rotational part in the real reflection group by replacing $R_1$ in the definition of 
$\sigma_1$ by the appropriate complex reflection of order 4 or 6. 

For $d=1,2,3,7,11$ it is known from \cite{FalPar}, \cite{FFP} and $\cite{Z}$ that $\langle \Gamma_\infty(d), I_0 \rangle= \Gamma_d$, 
giving the following:
\begin{cor}\label{picardrefl}  For $d=3,7,11$ (respectively 2 and 1), $\Gamma_d={\rm PU}(2,1,\mathcal{O}_d)$ has a subgroup of 
index 2 (respectively 4 and 8) which is generated by real reflections.
\end{cor}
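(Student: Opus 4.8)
The plan is to assemble the corollary from three facts already in place: the identity $\Gamma_d=\langle \Gamma_\infty(d),I_0\rangle$ for $d\in\{1,2,3,7,11\}$, which is the arithmetic input quoted from \cite{FalPar}, \cite{FFP} and \cite{Z}; the Proposition proved just above, which exhibits a concrete group $\Gamma_\sigma=\langle\sigma_0,\sigma_1,\sigma_2,\sigma_3,\sigma_4\rangle$ generated by real reflections and whose holomorphic elements form a finite-index subgroup of $\Gamma_d$; and the explicit index computations carried out for $\Gamma_\infty^h(d)$. The first thing I would make explicit is that, in the convention fixed in the introduction, a subgroup of $\Gamma_d$ is \emph{generated by real reflections} exactly when it is the index-$2$ holomorphic subgroup of a group of isometries generated by real reflections. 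Thus the subgroup to be produced is $\Gamma_\sigma^+$, the holomorphic part of $\Gamma_\sigma$, which has index $2$ in $\Gamma_\sigma$ because $\Gamma_\sigma$ contains the antiholomorphic involution $\sigma_0$.

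Next I would identify $\Gamma_\sigma^+$ explicitly. Each generator was defined as $\sigma_i=A_i\sigma_0$ with $A_0=\mathrm{Id}$ and $(A_1,A_2,A_3,A_4)=(R_1,T_2,T_1R_1,I_0)$, so that $\sigma_i\sigma_0=A_i$. Since the four products $\sigma_i\sigma_0$ ($1\leqslant i\leqslant 4$) generate the whole even-length subgroup of $\Gamma_\sigma$, this gives
\begin{equation*}
\Gamma_\sigma^+=\langle R_1,T_2,T_1R_1,I_0\rangle=\langle T_1,T_2,R_1,I_0\rangle .
\end{equation*}
Combined with $\Gamma_d=\langle\Gamma_\infty(d),I_0\rangle$, the index $[\Gamma_d:\Gamma_\sigma^+]$ is then exactly the index of $\langle T_1,T_2,R_1,I_0\rangle$ in $\langle\Gamma_\infty(d),I_0\rangle$ recorded above, namely $2$ when $d\equiv 3\pmod 4$ with $d\neq 3$ (that is, $d=7,11$) and $4$ when $d\equiv 2\pmod 4$ (that is, $d=2$). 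This already yields the asserted index in three of the five cases.

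It remains to deal with $d=1$ and $d=3$, where the same count gives index $16$ and $6$ respectively, the discrepancy being an extra rotational factor in $\Gamma_\infty^h(d)$ caused by the additional units of $\mathcal{O}_d$ (a cyclic group of order $2$ for $d=1$ and order $3$ for $d=3$, beyond the rotation by $R_1$ already accounted for). The plan is to absorb this factor into the reflection group by replacing the order-$2$ complex reflection $R_1$ in the definition of $\sigma_1$ by the complex reflection of order $4$ (for $d=1$) or order $6$ (for $d=3$) about the same complex line, the one polar to $v_1$. Since $\sigma_0$ fixes $v_1$, it preserves that complex line and meets it along a geodesic, so by Proposition~\ref{decompCrefllox}(e1) it still decomposes the new complex reflection; hence $\sigma_1$ remains a genuine real reflection and $\Gamma_\sigma$ is still generated by real reflections. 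With this modification $\Gamma_\sigma^+$ acquires the missing rotation, so its index in $\Gamma_d$ drops by the factor $2$ or $3$, giving $8$ for $d=1$ and $2$ for $d=3$, as claimed.

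The step I expect to require the most care is this last one: one must verify that the higher-order complex reflection supplies \emph{exactly} the missing cyclic rotational part of $\Gamma_\infty^h(d)$, neither too little nor too much, so that the resulting index is precisely $8$ and $2$ and not some intermediate value. This is a direct check against the unit group of $\mathcal{O}_d$ and the explicit description of $\Gamma_\infty^h(d)$ used above; once it is in place, together with the clean identification $\Gamma_\sigma^+=\langle T_1,T_2,R_1,I_0\rangle$ and the quoted identity $\Gamma_d=\langle\Gamma_\infty(d),I_0\rangle$, the corollary follows immediately.
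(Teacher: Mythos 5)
Your proposal is correct and follows essentially the same route as the paper: the quoted identity $\Gamma_d=\langle\Gamma_\infty(d),I_0\rangle$, the group $\Gamma_\sigma$ from the preceding Proposition with holomorphic part $\langle T_1,T_2,R_1,I_0\rangle$, the recorded index computations ($2$, $4$, $16$, $6$), and the replacement of $R_1$ by a complex reflection of order $4$ or $6$ to handle $d=1,3$. Your explicit identification $\Gamma_\sigma^+=\langle \sigma_i\sigma_0\rangle=\langle T_1,T_2,R_1,I_0\rangle$ via even-length words, and the use of Proposition~\ref{decompCrefllox}(e1) to check that the modified $\sigma_1$ is still a real reflection, are just clean write-ups of steps the paper leaves implicit.
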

Note that for $d=3$, the full group $\Gamma_3$ is in fact generated by real reflections by Proposition~\ref{Creflpairs}, as it was 
shown in \cite{FalPar} (Proposition 5.11) that $\Gamma_3 = \langle J, R_1 \rangle$ with $J$ regular elliptic of order 3 and $R_1$ a complex reflection of order 6. 
 
 \vspace{.3cm}
{\bf Question:} Are all Picard modular groups ${\rm PU}(2,1,\mathcal{O}_d)$ generated by real
  reflections up to finite index?

\raggedright
\frenchspacing

\end{document}